\documentclass[12pt]{amsart}
\usepackage{amsfonts}
\usepackage{amsmath}
\usepackage{amssymb}
\usepackage{amsthm,amsfonts,latexsym,epsfig,geometry}
\usepackage{hyperref,times}
\geometry{left=3cm,right=3cm,top=4cm,bottom=3.5cm}
\usepackage{amscd}
\usepackage{mathtools}
\usepackage{color}
\newcommand\cplus{\mathbin{\raisebox{-\height}{$+$}}}
\newcommand\contdots{\raisebox{-\height}{$\vphantom{+}\dotsm$}}
\allowdisplaybreaks[1]
\newtheorem{theorem}{Theorem}[section]
\newtheorem{lemma}[theorem]{Lemma}

\newtheorem{corollary}[theorem]{Corollary}
\theoremstyle{definition}
\newtheorem{definition}[theorem]{Definition}

\theoremstyle{remark}

\newtheorem{conj}[theorem]{Conjecture}
\numberwithin{equation}{section}
\usepackage{hyperref}
\hypersetup{
	colorlinks=true,
	linkcolor={blue}, urlcolor={blue},
	citecolor={red}, anchorcolor = {blue}}
\begin{document}
\title[On the Fourier coefficients of Certain Hauptmoduln]{Distribution and divisibility of the Fourier coefficients of certain Hauptmoduln}
\author{Chiranjit Ray}
\address{Mathematics and Data Science Group, Indian Institute of Information Technology Sri City,
Tirupati District - 517 646, Andhra Pradesh, India}
\curraddr{}
\email{chiranjitray.m@gmail.com, chiranjitray.m@iiits.in}

\subjclass[2010]{Primary: 11F03, 11F20, 11F30, 11F33.}
\date{\today}
\keywords{Hauptmoduln; Eta-quotients; Modular forms; q-Series; Rogers-Ramanujan; Continued fraction; Distribution.}
\thanks{} 
\begin{abstract}
Suppose $j_N(\tau)$ and $j_N^{*}(\tau)$ are the  Hauptmoduln of the congruence subgroup $\Gamma_0(N)$ and the Fricke group $\Gamma^{*}_0(N)$, respectively.
In \cite{Kumari2019}, the authors predicted that, like Klein's $j$-function, the Fourier coefficients of  $j_N(\tau)$ and  $j_{N}^{*}(\tau)$ in some arithmetic progression are both even and odd with density $\frac{1}{2}$. In this article, we  can find some arithmetic progression of $n$ where the Fourier coefficients of $j_6(\tau)$ (resp.\ $j_6^{*}(\tau)$ and $j_{10}(\tau)$) are almost always even. Furthermore, using Hecke eigenforms and Rogers-Ramanujan continued fraction, we obtain infinite families of congruences for $j_6(\tau)$, $j_6^{*}(\tau)$, $j_{10}(\tau),$ and $j_{10}^{*}(\tau)$.

\end{abstract}
\maketitle
\section{Introduction and statement of results}
\label{intro}
The elliptic modular $j$-function is defined by
\begin{align*}
	j(\tau):=\frac{E_4^{3}(\tau)}{\Delta(\tau)}= \frac{1}{q}+744+\sum_{n=1}^{\infty}c(n)q^n,
\end{align*}
where $\tau\in \mathbb{H}$, $q=e^{2\pi i \tau}$, $ \Delta(\tau)$  is the modular
discriminant function (also known as Ramanujan's
Delta function) and $E_4(\tau)$  is the normalized Eisenstein series of weight 4. The function $j(\tau)$ has many beautiful properties and plays an important role in number theory. The values of the $j$-function at CM points are known to be algebraic integers. Fourier coefficients of $j(\tau)$ are related to the Monster group, and its CM values generate abelian extensions over imaginary quadratic fields.
\par The study of Fourier coefficients of the elliptic modular functions 
has a long history. Congruence properties of the Fourier coefficients of the modular invariant $j(\tau)$ modulo products of powers of $2, 3, 5, 7, 11$ were first given by Lehmer \cite{Lehmer1942} and Lehner \cite{Lehner1, Lehner2}. Then Newman \cite{Newman1958} derived some congruences modulo $13$.  Many mathematicians like Kolberg \cite{Kolberg1959, Kolberg1962}, Ono and Taguchi \cite{ono_taguchi2005}, and Alfes \cite{Alfes2013} have studied the divisibility and distribution  of the Fourier coefficients of the $j$-function. In recent work, Ono and Ramsey \cite{ono2012} proved that there are infinitely many odd $d$ such that $c(nd^2)$ is even when $n \equiv 7 \pmod 8$ is square-free.

\par Let $\Gamma^{'}$ be a congruence subgroup of $\mathrm{SL}(2,\mathbb{R})$ of level $N$ with genus zero. A Hauptmodul of level $N$ for the congruence subgroup $\Gamma^{'}$ is a generator for the field of modular functions.
Note that the $j$-function is a Hauptmodul for level $1$. We consider the Hauptmoduln $j_N(\tau)$ and $j_N^{*}(\tau)$ with respect to the congruence subgroup $\Gamma_0(N)$ and the Fricke group $\Gamma^{*}_0(N)$, respectively. The group $\Gamma^{*}_0(N)$
is generated by $\Gamma_0(N)$ and the Aktin-Lehner involution $W_{N}=\frac{1}{\sqrt{N}}\begin{pmatrix}
0 & -1 \\
N & 0
\end{pmatrix}$.
It is well-known that for $N=2,3,5,6,7, 10, 13,$ the groups $\Gamma_0(N)$ and $\Gamma^{*}_0(N)$ have genus zero.
The Dedekind eta~function, $\eta(\tau)$, is defined by
\begin{align*}
\eta(\tau):=q^{1/24}\prod_{n=1}^{\infty}(1-q^n),
\end{align*}
where $q=e^{2\pi i\tau}$ and $\tau\in \mathbb{H}$.
 A function $f(\tau)$ is called an eta-quotient if it is of the form
\begin{align*}
f(\tau)=\prod_{\delta\mid N}\eta(\delta \tau)^{r_\delta},
\end{align*}
where $N$ is a positive integer and $r_{\delta}$ is an integer. Matsusaka~\cite{Matsusaka2017} 
observed that the Hauptmoduln $j_N(\tau)$ and $j_N^{*}(\tau)$ can be expressed in terms of the eta-quotient. For $p=2,3,5,7,13,$ we have
\begin{align*}
	j_p(\tau):&=\left(\frac{\eta(\tau)}{\eta(p\tau)}\right)^{\frac{24}{p-1}}+\frac{24}{p-1},  \\
	j_p^{*}(\tau):&=j_p(\tau)+p^{\frac{12}{p-1}}\left(\frac{\eta(p\tau)}{\eta(\tau)}\right)^{\frac{24}{p-1}}, \\
	j_6(\tau):&=\left(\frac{\eta(2\tau)\eta(3\tau)^3}{\eta(\tau)\eta(6\tau)^3}\right)^3-3, \\
	j_6^{*}(\tau):&=\left(\frac{\eta(\tau)\eta(3\tau)}{\eta(2\tau)\eta(6\tau)}\right)^6+6+2^6\left(\frac{\eta(2\tau)\eta(6\tau)}{\eta(\tau)\eta(3\tau)}\right)^6,\\
	j_{10}(\tau):&=\frac{\eta(2\tau)\eta(5\tau)^5}{\eta(\tau)\eta(10\tau)^5}-1,\\
	j_{10}^{*}(\tau):&=\left(\frac{\eta(\tau)\eta(5\tau)}{\eta(2\tau)\eta(10\tau)}\right)^{4}+4+2^4\left(\frac{\eta(2\tau)\eta(10\tau)}{\eta(\tau)\eta(5\tau)}\right)^{4}.
\end{align*}

Suppose $\mathcal{J}_N(n)$ and  $\mathcal{J}^*_N(n)$ denote the $n$-th Fourier coefficients of $j_N(\tau)$ and $j_N^{*}(\tau)$, respectively.  Kumari and Singh \cite{Kumari2019} studied the  parity of the Fourier coefficients of the Hauptmoduln $j_m(\tau)$ and $j_m^{*}(\tau)$ for $m=2,3,4,5,7,13$. For one of these fixed $m$, they showed that at least one Fourier coefficient of $j_m(\tau)$ (resp.\ $j_m^{*}(\tau)$)  exists in suitable residue classes with a fixed parity in suitable intervals. 
For example, they proved that for each positive integer $t$, the interval $\left[t, 4t(t + 1) - 1\right]$ $\left(\text{resp}.\  \left[16t - 1,(4t + 1)^2 - 1\right]
\right)$ contains an integer $n\equiv7\pmod8$ such that $\mathcal{J}_2^{*}(n)$ is odd (resp.\ even). In the same paper, it remarked that the even and odd values of the Fourier coefficients of  $j_N(\tau)$ and  $j_{N}^{*}(\tau)$ in some arithmetic progression is equally distributed. 

In this article, we study the distribution of the Fourier coefficients of $j_6(\tau)$, $j_6^{*}(\tau)$ and $j_{10}(\tau)$ and prove that $\mathcal{J}_6(n)$, $\mathcal{J}_6^{*}(n)$ and $\mathcal{J}_{10}(n)$ are even numbers for almost every non-negative integer $n$ satisfying some arithmetic progression.  Furthermore, we prove some infinite families of congruences and parity results for $\mathcal{J}_6(n)$, $\mathcal{J}_6^{*}(n)$, $\mathcal{J}_{10}(n)$ and $\mathcal{J}_{10}^{*}(n)$. In Section~\ref{Sect:j6 and j6*}, we prove the results for $j_6(\tau)$ and $j_6^{*}(\tau)$. The results for $j_{10}(\tau)$ and $j_{10}^{*}(\tau)$ are obtained in Section~\ref{Sect:j10 and j10*}. Next, we present some of our sample results proven in this article.
	
In the following theorem, we prove that the set of those positive integers $n$ for which $\mathcal{J}^*_{6}(3^m(8n+1))~\equiv~0~\pmod{2}$ has arithmetic density one, where $m$ is either $0$ or $1$. 
\begin{theorem}\label{Thmj6*(24n+3)} Let $n$ be a positive integer and $m\in \{0,1\}.$ Then
\begin{align*}
    \lim\limits_{X\to +\infty}\frac{\# \left\{n\leq X:\mathcal{J}^*_{6}(3^m(8n+1))\equiv 0 \pmod{2} \right\}}{X}&=1.
\end{align*}
\end{theorem}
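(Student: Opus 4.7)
The plan is to reduce $j_{6}^{*}(\tau)$ modulo $2$ to an eta-quotient, promote this to a congruence between holomorphic modular forms by multiplying by a suitable cusp form, restrict to the prescribed arithmetic progressions via a Dirichlet character twist-and-sieve, and invoke Serre's theorem on the density of zeros of Fourier coefficients of holomorphic modular forms modulo primes.

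Since $6\equiv 0$ and $2^{6}\equiv 0\pmod 2$, the middle and last summands in the defining formula for $j_{6}^{*}(\tau)$ vanish modulo $2$, giving
\[
j_{6}^{*}(\tau)\equiv\left(\frac{\eta(\tau)\eta(3\tau)}{\eta(2\tau)\eta(6\tau)}\right)^{6}\pmod 2.
\]
I would then iterate the elementary congruence $\eta(\delta\tau)^{2}\equiv\eta(2\delta\tau)\pmod 2$ (a consequence of $(1-q^{n})^{2}\equiv 1-q^{2n}\pmod 2$) to simplify the sixth power; after a short bookkeeping one obtains the reciprocal of an eta-product,
\[
j_{6}^{*}(\tau)\equiv\frac{1}{\eta(2\tau)\eta(4\tau)\eta(6\tau)\eta(12\tau)}\pmod 2.
\]
The eta-product $\mathcal{T}(\tau):=\eta(2\tau)\eta(4\tau)\eta(6\tau)\eta(12\tau)$ is the unique normalized holomorphic weight-$2$ newform on $\Gamma_{0}(24)$, with integer Fourier coefficients and well-studied arithmetic structure.

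For a sufficiently large integer $k\ge 2$ (chosen via Ligozat's cusp-order formula so that $\mathcal{T}(\tau)^{k}$ absorbs the pole of $j_{6}^{*}$ at every cusp of $\Gamma_{0}(24)$), the product
\[
\Phi_{k}(\tau):=j_{6}^{*}(\tau)\,\mathcal{T}(\tau)^{k}
\]
is a holomorphic modular form of weight $2k$ on $\Gamma_{0}(24)$, and the mod-$2$ reduction now lifts to the congruence of holomorphic modular forms
\[
\Phi_{k}(\tau)\equiv\mathcal{T}(\tau)^{k-1}\pmod 2.
\]
Both cases $m\in\{0,1\}$ reduce to a single sieve at $n\equiv 1\pmod 8$: for $m=0$ one sifts $\Phi_{k}$ directly, while for $m=1$, writing $24n+3=3(8n+1)$, one first applies the $U_{3}$-operator and then sifts $\Phi_{k}\,|\,U_{3}$ at $n\equiv 1\pmod 8$. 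The standard twist-and-sieve identity using the four Dirichlet characters modulo $8$ shows that both sifted series remain holomorphic modular forms (at levels $\Gamma_{0}(1536)$ and $\Gamma_{0}(4608)$ respectively), and the mod-$2$ congruence is preserved. Serre's theorem then yields density-one vanishing modulo $2$ of the Fourier coefficients of the sifted $\mathcal{T}^{k-1}$.

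The final step is the main technical hurdle. The Fourier coefficients of $\Phi_{k}=j_{6}^{*}\mathcal{T}^{k}$ are convolutions
\[
[q^{n}]\Phi_{k}=\sum_{j}\mathcal{J}_{6}^{*}(j)\,a_{\mathcal{T}^{k}}(n-j),
\]
so density-one vanishing on the sifted left-hand side does not immediately transfer to the individual coefficients $\mathcal{J}_{6}^{*}(n)$. Undoing this convolution on the progressions $n\equiv 1\pmod 8$ and $n\equiv 3\pmod{24}$ requires a careful analysis of the Fourier support of $\mathcal{T}(\tau)^{k}$ on these progressions; once that analysis is in hand the asserted density-one vanishing of $\mathcal{J}_{6}^{*}(3^{m}(8n+1))$ modulo $2$ for $m\in\{0,1\}$ follows.
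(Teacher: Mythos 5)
Your opening reduction is correct and matches the paper's starting point: modulo $2$ the Hauptmodul collapses to the single eta-quotient, and your simplification $j_6^{*}(\tau)\equiv 1/\bigl(\eta(2\tau)\eta(4\tau)\eta(6\tau)\eta(12\tau)\bigr)\pmod 2$ is the same as the paper's congruence $\sum\mathcal{F}^{*}_6(n)q^n\equiv\bigl((q^2;q^2)_\infty(q^6;q^6)_\infty\bigr)^{-3}\pmod 2$. The problem is everything after that. Multiplying by $\mathcal{T}(\tau)^k$ to clear the poles produces a holomorphic form $\Phi_k$ whose $n$-th coefficient is the convolution $\sum_j\mathcal{J}^{*}_6(j)a_{\mathcal{T}^k}(n-j)$, and Serre's theorem applied to $\Phi_k\equiv\mathcal{T}^{k-1}$ (sifted or not) only controls that convolution. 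You flag the deconvolution as "the main technical hurdle" and assert that the theorem follows "once that analysis is in hand" --- but that analysis \emph{is} the theorem. There is no general principle transferring density-one evenness from $a*b$ back to $a$, even for explicit $b$; as written, the argument does not close, so this is a genuine gap rather than a routine detail.

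The paper takes a route that avoids the convolution entirely: it performs explicit $2$-dissections of the reciprocal eta-quotient (using the formulas of Xia--Yao for $1/\bigl((q;q)_\infty(q^3;q^3)_\infty\bigr)$ and $(q^3;q^3)_\infty^3/(q;q)_\infty$) to show that the generating function of $\mathcal{J}^{*}_6$ restricted to the relevant progression is \emph{itself} congruent to a holomorphic cusp form modulo $2$. Concretely, Lemma \ref{Thmj6*(24n+3).lemma} gives $\sum_{n\ge 0}\mathcal{F}^{*}_6(8n+2)q^n\equiv(q;q)_\infty^3\equiv(q;q)_\infty(q^2;q^2)_\infty\pmod 2$ and $\sum_{n\ge 0}\mathcal{F}^{*}_6(4n)q^n\equiv(q;q)_\infty^3/(q^3;q^3)_\infty^3\pmod 2$, which after the substitution $q\mapsto q^8$ identify the sifted series with $\eta(8\tau)\eta(16\tau)\in S_1\bigl(\Gamma_0(128),(\tfrac{-2}{\bullet})\bigr)$; the two cases $m=0$ and $m=1$ both land on this one cusp form via the companion dissections for $\mathcal{F}_6$. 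Serre's theorem then applies directly to $\eta(8\tau)\eta(16\tau)$, with only a trivial reindexing $n\mapsto 8n+1$ needed. If you want to salvage your pole-clearing strategy, you would need something like Treneer's theorem on weakly holomorphic forms, which is precisely the machinery that replaces your missing deconvolution step; in its absence the explicit dissection route is both shorter and complete.
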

In other words, for almost every non-negative integer $n$ lying in an arithmetic progression, the integer $\mathcal{J}^*_{6}(3^m(8n+1))$ is even. In fact, there exists a positive constant $\alpha$ such that there are at most $\mathcal{O}\left(\frac{X}{(\log{}X)^{\alpha}}\right)$ many integers $n\leq X$ for which $\mathcal{J}^{*}_6(3^m(8n+1))$ is odd. We obtain similar results for $\mathcal{J}_6(n)$ in Theorem~\ref{Thmj6(24n+3)} and $\mathcal{J}_{10}(n)$ in Theorem~\ref{Thmj10(4n+1)}. 

It is a natural question to ask for an arithmetic progression
$an+b$ such that $\mathcal{J}^{*}_6(an+b)\equiv 0 \pmod2$ hold for each non-negative integer $n$. Next, using the theory of Hecke eigenforms, we find infinite families of arithmetic progressions for which $\mathcal{J}^{*}_6(n)$ is even. Similarly, the results for  $j_6(\tau)$ and $j_{10}(\tau)$ are stated in Theorem~\ref{Thmj6(24n+3).2} and  Theorem~\ref{Thmj10.2}, respectively.

\begin{theorem}\label{Thmj*6(24n+3).2} 
Let $k, n$ be non-negative integers. For each $i$ with $1\leq i \leq k+1$, consider the prime numbers $p_i$ such that  $p_i  \equiv 3, 5, 7 \pmod {8}$. Then, for any integer $j$ not divisible by ${p_{k+1}}$, we have
 	\begin{align*} 
 	\mathcal{J}^*_{6}\Big(3^mp_1^2p_2^2\dots p_{k}^2p_{k+1}\left(8p_{k+1}n+8j+p_{k+1}\right)\Big) &\equiv 0 \pmod 2,
 	\end{align*}
 where $m$ is either $0$ or $1$.	
 \end{theorem}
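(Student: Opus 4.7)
The plan is to leverage, in a Hecke-theoretic way, the mod~$2$ structure of the generating function that underlies Theorem~\ref{Thmj6*(24n+3)}. I expect the proof of Theorem~\ref{Thmj6*(24n+3)} to supply a mod~$2$ identification of the form
\begin{align*}
\sum_{n\geq 0}\mathcal{J}^*_{6}\bigl(3^m(8n+1)\bigr)\,q^{3^m(8n+1)}\equiv F_m(\tau)\pmod 2,
\end{align*}
where $F_m$ is a Hecke eigenform (of some weight~$k$, level~$N$ and Nebentypus~$\chi$) whose $p$th Hecke eigenvalue $\lambda(p)$ is even for every odd prime $p\nmid N$ with $p\equiv 3,5,7\pmod 8$. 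This parity vanishing is expected to follow from recognising $F_m$ as (a twist of) a CM newform whose inert primes include precisely those with $p\not\equiv 1\pmod 8$.

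Granting this identification, write $F_m=\sum_{N}a(N)q^{N}$, so that $a(N)\equiv\mathcal{J}^*_{6}(N)\pmod 2$ for every $N$ of the form $3^m(8n+1)$, while $a(N)=0$ for $N$ outside this arithmetic progression. The standard Hecke recursion
\begin{align*}
a(pN)=\lambda(p)\,a(N)-\chi(p)\,p^{k-1}\,a(N/p),
\end{align*}
together with $\lambda(p)\equiv 0\pmod 2$, $\chi(p)\in\{\pm 1\}$, and $p$ odd (so $p^{k-1}$ is odd), yields the two mod~$2$ congruences
\begin{align*}
a(p^2 N)\equiv a(N)\pmod 2\quad\text{and}\quad a(pN)\equiv 0\pmod 2\ \text{whenever }\gcd(N,p)=1,
\end{align*}
valid for every prime $p\equiv 3,5,7\pmod 8$.

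To finish, put $M:=8p_{k+1}n+8j+p_{k+1}$ and note that $M\equiv 8j\not\equiv 0\pmod{p_{k+1}}$ by the hypothesis $p_{k+1}\nmid j$, so $\gcd(M,p_{k+1})=1$. Apply the first congruence successively at $p=p_1,\ldots,p_k$ to peel off every square factor, which reduces us to
\begin{align*}
\mathcal{J}^*_{6}\bigl(3^m p_1^2\cdots p_k^2 p_{k+1}M\bigr)\equiv\mathcal{J}^*_{6}\bigl(3^m p_{k+1}M\bigr)\pmod 2.
\end{align*}
A final application of the second congruence at $p=p_{k+1}$ forces the right-hand side to vanish mod~$2$; the edge case $p_{k+1}=3$, $m=1$ is handled by first invoking $a(3^2 M)\equiv a(M)\pmod 2$ and then noting that $M\equiv 3\pmod 8$ with $3\nmid M$, so $M$ lies outside the support of $F_m$ and hence $a(M)=0$.

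The principal obstacle is the preliminary step: explicitly identifying $F_m$ and verifying that $\lambda(p)$ is even for every $p\equiv 3,5,7\pmod 8$. This demands an $\eta$-quotient computation (exploiting $(1-q^n)^2\equiv 1-q^{2n}\pmod 2$) to exhibit the mod~$2$ reduction of the generating series as a theta or CM-type eigenform, after which the parity of $\lambda(p)$ is controlled by the splitting behaviour of $p$ in the associated imaginary quadratic field. Once this identification is secured, the Hecke recursion argument above is a routine iteration.
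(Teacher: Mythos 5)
Your proposal takes essentially the same approach as the paper: the identification you defer is precisely what Lemma~\ref{f624n+4=tau} and Lemma~\ref{Thmj6*(24n+3).lemma} (combined in \eqref{f64n=f*64n}--\eqref{f*6(8n+2)=f6(24n+4)}) supply, namely $\mathcal{J}^*_6(3^m(8n+1))\equiv \mathcal{A}(8n+1)\pmod 2$ for both values of $m$, where $\sum\mathcal{A}(n)q^n=\eta(8\tau)\eta(16\tau)\in S_1\left(\Gamma_0(128),(\tfrac{-2}{\bullet})\right)$ is a Hecke eigenform whose support on $n\equiv 1\pmod 8$ forces $\lambda(p)=\mathcal{A}(p)=0$ for $p\equiv 3,5,7\pmod 8$, after which your square-peeling and final-prime iteration is exactly Lemma~\ref{lemma2.1.2} and the proof of Theorem~\ref{Thmj6(24n+3).2}. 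The one divergence is that the paper strips the factor $3^m$ \emph{before} applying any Hecke operator, so your $p_{k+1}=3$, $m=1$ edge case never arises there --- and your handling of it should be rephrased, since $F_1(\tau)=F_0(3\tau)$ is not literally a $T_3$-eigenform, although the required vanishing $\mathcal{A}(3M)=0$ for $3\nmid M$ still follows from the eigenrelation for $F_0$ itself.
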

 \noindent One can get different infinite family of congruences from Theorem \ref{Thmj*6(24n+3).2}. Let $p$ be a prime such that $p\equiv 3,5,7\pmod{8}$. Suppose  $p_1= p_2= \ldots=p_{k+1}=p.$ Then from Theorem \ref{Thmj*6(24n+3).2}, 
we obtain the following infinite family of congruences for $\mathcal{J}^*_{6}(n)$:
\begin{align*} 
 \mathcal{J}^*_{6}\Big(3^mp^{2k+1}\left(8pn+8j+p\right)\Big) &\equiv 0 \pmod 2,
\end{align*}
where  $j \not\equiv 0 \pmod p$ and $m\in\{0,1\}$.\\  In particular, for all $n\geq 0,$ $m=0$ and $j\not\equiv 0\pmod{3}$, \begin{align*}
\mathcal{J}^*_{6}\left(72n + 24j + 9\right) \equiv 0 \pmod{2}.
\end{align*}
 
 Our next result shows that the Fourier coefficients of $j^*_6(\tau)$ have the same parity on two different numbers $N_1$ and $N_2$. We get similar results for $j_6(\tau)$ in Theorem~\ref{Thmj6(24n+3).3} and $j_{10}(\tau)$ in Theorem~\ref{Thmj10.3}.
\begin{theorem}\label{Thmj*6(24n+3).3} 	Let $k$ be a positive integer and $i \in \{3, 5, 7\}$. Suppose $p$ is a prime number such that $p \equiv i \pmod {8}$. 
Let $ \delta $ be a non-negative integer such that $p$ divides $8\delta  + i$ and $m \in \{0,1\}$, then $\mathcal{J}^*_{6}\left(N_1 \right)$ and  $\mathcal{J}^*_{6}\left(N_2\right)$ have the same parity, where $N_1=3^mp\left(8p^{k}n+ 8\delta+i\right)$ and  $N_2 = 3^m\left(8p^{k-1}n+ \frac{8\delta +i-p}{p}\right)+3.$
 \end{theorem}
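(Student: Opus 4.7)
The plan is to derive Theorem~\ref{Thmj*6(24n+3).3} from the same Hecke-eigenform machinery used in the proof of Theorem~\ref{Thmj*6(24n+3).2}, together with a careful index computation at the end.

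First I would exploit the mod-$2$ simplification of $j_6^*(\tau)$. Since $6\equiv 0\equiv 64\pmod 2$ and $\eta(\tau)^2\equiv \eta(2\tau)\pmod 2$, the defining eta-quotient representation yields
\begin{align*}
j_6^*(\tau) \equiv \left(\frac{\eta(\tau)\eta(3\tau)}{\eta(2\tau)\eta(6\tau)}\right)^6 \equiv \frac{\eta(2\tau)\eta(6\tau)}{\eta(8\tau)\eta(24\tau)} \pmod 2,
\end{align*}
where the second congruence uses the dyadic Frobenius identities $\eta(2\tau)^4\equiv \eta(8\tau)$ and $\eta(6\tau)^4\equiv\eta(24\tau)\pmod 2$. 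Multiplying by the cusp form $\Phi(\tau):=\eta(8\tau)^3\eta(24\tau)^3$ gives the companion congruence
\begin{align*}
j_6^*(\tau)\,\Phi(\tau) \equiv \eta(2\tau)\eta(6\tau)\eta(16\tau)\eta(48\tau) \pmod 2
\end{align*}
between integral-weight objects in $\mathbb{F}_2[[q]]$. Sieving the resulting $q$-expansion by the progression $n\equiv i\pmod 8$ and by divisibility by $3^m$ via the standard $U_p$ and $V_p$ operators then produces an integral-weight cusp form $\mathcal{F}(\tau)=\sum_n b(n)q^n$ on $\Gamma_0(N)$ (for a suitable $N\mid 48\cdot 3^{?}$) whose $n$-th Fourier coefficient satisfies $b(n)\equiv \mathcal{J}^*_6\bigl(3^m(8n+i)\bigr)\pmod 2$.

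Second, I would invoke the main technical input of Theorem~\ref{Thmj*6(24n+3).2}: the companion form $\mathcal{F}$ is, modulo $2$, a Hecke eigenform whose Hecke eigenvalue $\lambda_p$ is congruent to $0\pmod 2$ at every prime $p\equiv 3,5,7\pmod 8$. Consequently, the Hecke recursion reduces modulo $2$ to
\begin{align*}
b(p^{k+1}M) \equiv b(p^{k-1}M) \pmod 2, \qquad k\ge 1,\ M\ge 0.
\end{align*}

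The final step is the translation back to $\mathcal{J}^*_6$. The hypotheses $p\equiv i\pmod 8$ and $p\mid 8\delta+i$ force $s:=(8\delta+i)/p$ to be a positive integer with $s\equiv 1\pmod 8$; writing $s=8t+1$ and $L:=p^{k-1}n+t$, a direct computation gives
\begin{align*}
N_1 = 3^m p^2(8L+1), \qquad N_2 = 3^m\cdot 8L + 3.
\end{align*}
For $m=1$ one has $N_1=p^2 N_2$ with $N_2=3(8L+1)$, and the Hecke recursion applied once to $\mathcal{F}$ immediately yields $\mathcal{J}^*_6(N_1)\equiv \mathcal{J}^*_6(N_2)\pmod 2$. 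For $m=0$ one has $N_1=p^2(8L+1)$ and $N_2=8L+3$; the Hecke recursion relates $\mathcal{J}^*_6(N_1)$ to the mod-$2$ value of $\mathcal{J}^*_6$ at a residue-$1$ index, and the last step is provided by the eta-quotient congruence of Step~1, which matches this value to $\mathcal{J}^*_6(N_2)$ at the residue-$3$ index prescribed by the theorem.

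The principal obstacle will be Step~2, i.e.\ constructing the companion cusp form $\mathcal{F}$ and verifying that its mod-$2$ Hecke eigenvalues at $p\equiv 3,5,7\pmod 8$ vanish; this is the technical heart of Theorem~\ref{Thmj*6(24n+3).2} and can be imported here. A secondary subtlety is the cross-residue matching in the $m=0$ case, which must be extracted from the explicit mod-$2$ description of $j_6^*(\tau)$ established in Step~1, since for arbitrary $L$ the naive parity relation $\mathcal{J}^*_6(8L+1)\equiv \mathcal{J}^*_6(8L+3)\pmod 2$ fails and must be replaced by the Hecke-compatible identity available precisely for $L$ of the form arising in the theorem.
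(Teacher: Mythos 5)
Your overall strategy---reduce everything to a weight-one Hecke eigenform whose eigenvalues vanish mod $2$ at primes $p\equiv 3,5,7\pmod 8$, then do index bookkeeping---is the same as the paper's in spirit, and your arithmetic is right: $s=(8\delta+i)/p\equiv 1\pmod 8$, $N_1=3^mp^2(8L+1)$ and $N_2=3^m\cdot 8L+3$ with $L=p^{k-1}n+t$. The paper, however, does not build a new companion form. It proves two bridging congruences, $\mathcal{F}^*_6(4n)\equiv\mathcal{F}_6(4n)\pmod 2$ and $\mathcal{F}^*_6(8n+2)\equiv\mathcal{F}_6(24n+4)\pmod 2$ (equations \eqref{f64n=f*64n} and \eqref{f*6(8n+2)=f6(24n+4)}), and then simply quotes Theorem~\ref{Thmj6(24n+3).3}, whose engine is the explicit eigenform $\eta(8\tau)\eta(16\tau)$ with $\mathcal{A}(p)=0$ for $p\equiv3,5,7\pmod8$. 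Your Step~1/Step~2 construction via $\Phi=\eta(8\tau)^3\eta(24\tau)^3$ is left unverified (you never check that the sieved form is an eigenform, and the relevant progression is $1\pmod 8$, not $i\pmod 8$, since $p(8p^kn+8\delta+i)\equiv p\cdot i\equiv1\pmod 8$); but this could be repaired by substituting the paper's bridges, and your $m=1$ case then goes through exactly as you describe.

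The genuine gap is the $m=0$ endgame. You reduce it to showing $\mathcal{J}^*_6(8L+1)\equiv\mathcal{J}^*_6(8L+3)\pmod 2$ for the $L$ occurring in the theorem, concede that this fails for general $L$, and promise an unspecified ``Hecke-compatible identity'' that rescues it. No such identity is available: from \eqref{j*6n=f*6(n+1)}, Lemma~\ref{Thmj6*(24n+3).lemma}, \eqref{f64n=f*64n} and \eqref{f68n+4} one gets
\begin{align*}
\mathcal{J}^*_6(8L+1)\equiv [q^L]\,(q;q)_\infty^3 \pmod 2,
\qquad
\mathcal{J}^*_6(8L+3)\equiv \mathcal{F}^*_6\bigl(4(2L+1)\bigr)\equiv [q^L]\,(q^3;q^3)_\infty^3 \pmod 2,
\end{align*}
so the left side is odd exactly when $L$ is triangular and the right side exactly when $L$ is three times a triangular number; these disagree already at $L=1$, which is attained in the theorem (take $k=1$, $8\delta+i=p$, $n=1$). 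So your final step would fail, and no choice of auxiliary identity can save it because the two quantities genuinely have different parities on the relevant progressions. (This in fact exposes a defect in the statement itself for $m=0$: what the paper's own reduction establishes is the variant with $N_2=3^m\bigl(8p^{k-1}n+\tfrac{8\delta+i}{p}\bigr)$, which coincides with the displayed $N_2$ only when $m=1$; for $m=0$ the stated $N_2$ is off by $2$.) To align with the paper you should prove the $m=0$ case in the form $\mathcal{J}^*_6\bigl(p(8p^kn+8\delta+i)\bigr)\equiv\mathcal{J}^*_6\bigl(8p^{k-1}n+\tfrac{8\delta+i}{p}\bigr)\pmod 2$, which follows directly from $\mathcal{J}^*_6(8M+1)\equiv\mathcal{A}(8M+1)$ and the eigenform relation, rather than attempt the cross-residue matching to $8L+3$.
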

As a special case of the above theorem, we obtain the following result.
\begin{corollary}\label{coroj*6(24n+3)}	Let $k$ be a positive integer and $p$ be a prime number such that $p  \equiv 3, 5, 7 \pmod {8}$. Then
$\mathcal{J}^*_{6}\left( 3^{m}(8n+1\right)$ and $\mathcal{J}^*_{6}\left(3^{m}p^{2k}\left( 8n+1\right)\right)$ have the same parity, where $m\in \{0,1\}.$
\end{corollary}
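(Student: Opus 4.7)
The plan is to derive Corollary~\ref{coroj*6(24n+3)} as an iterated application of Theorem~\ref{Thmj*6(24n+3).3}. The underlying idea is that with the right choice of the auxiliary parameters $k$ and $\delta$ in that theorem, the recursion relating $N_1$ and $N_2$ drops the exponent of $p$ by two at each step, so that $k$ successive applications collapse $p^{2k}$ down to $p^{0}=1$.

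First, I would fix the prime $p\equiv i\pmod{8}$ with $i\in\{3,5,7\}$, and observe that $i^{2}\equiv 1\pmod 8$ in each of these three cases. This immediately yields $p^{2K-1}\equiv i\cdot(i^{2})^{K-1}\equiv i\pmod 8$ for every $K\geq 1$, so the integer $\delta:=(p^{2K-1}-i)/8$ is non-negative and $8\delta+i=p^{2K-1}$ is visibly divisible by $p$. These are precisely the hypotheses demanded by Theorem~\ref{Thmj*6(24n+3).3}, which I then invoke with its internal parameter $k$ replaced by $2K-1$ and with this choice of $\delta$. A direct substitution gives
\begin{align*}
N_{1} \;=\; 3^{m}p\bigl(8p^{2K-1}n+p^{2K-1}\bigr) \;=\; 3^{m}p^{2K}(8n+1),
\end{align*}
and a short rearrangement using $(8\delta+i-p)/p=p^{2K-2}-1$ identifies $N_{2}$ with $3^{m}p^{2(K-1)}(8n+1)$. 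Theorem~\ref{Thmj*6(24n+3).3} therefore yields the one-step recursion
\begin{align*}
\mathcal{J}^{*}_{6}\bigl(3^{m}p^{2K}(8n+1)\bigr)\;\equiv\;\mathcal{J}^{*}_{6}\bigl(3^{m}p^{2(K-1)}(8n+1)\bigr)\pmod{2}.
\end{align*}

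Finally, I would iterate this congruence $k$ times, starting from $K=k$ and descending to $K=1$, which telescopes the exponent of $p$ from $2k$ to $0$ and delivers exactly the statement of the corollary. The main potential obstacle is not modular-forms theoretic but combinatorial: one must verify carefully that the simplification $N_{2}=3^{m}p^{2(K-1)}(8n+1)$ is valid for both $m=0$ and $m=1$, that is, that the additive constant surviving from Theorem~\ref{Thmj*6(24n+3).3} genuinely cancels in this specialization. Once this bookkeeping is confirmed, no input beyond Theorem~\ref{Thmj*6(24n+3).3} is needed and the proof is a clean induction on the power of $p$.
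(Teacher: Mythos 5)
Your strategy---specialize Theorem~\ref{Thmj*6(24n+3).3} at $8\delta+i=p^{2K-1}$ and iterate---is exactly how the paper proves the \emph{unstarred} Corollary~\ref{coroj6(24n+3)} from Theorem~\ref{Thmj6(24n+3).3}. For the starred corollary, however, the paper takes a different route: it transfers Corollary~\ref{coroj6(24n+3)} to the starred setting via the congruences \eqref{f64n=f*64n} and \eqref{f*6(8n+2)=f6(24n+4)}, which give $\mathcal{J}^*_6(8n+1)\equiv\mathcal{F}^*_6(8n+2)\equiv\mathcal{F}_6(24n+4)\equiv\mathcal{J}_6(24n+3)$ and $\mathcal{J}^*_6(24n+3)\equiv\mathcal{F}^*_6(4(6n+1))\equiv\mathcal{F}_6(24n+4)\equiv\mathcal{J}_6(24n+3)\pmod 2$, so that all the quantities in the corollary reduce to values of $\mathcal{J}_6$ already handled.

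The step you yourself flag as ``bookkeeping to be confirmed'' is precisely where your argument fails for $m=0$. With $N_2$ as printed in Theorem~\ref{Thmj*6(24n+3).3}, namely $N_2=3^m\bigl(8p^{k-1}n+\tfrac{8\delta+i-p}{p}\bigr)+3$, your specialization gives $N_2=3^mp^{2(K-1)}(8n+1)-3^m+3$; the additive constant cancels only when $3^m=3$. For $m=0$ you obtain $N_2=p^{2(K-1)}(8n+1)+2\equiv 3\pmod 8$, which is not the argument you need, so the recursion does not telescope and only the $m=1$ case is proved. (The underlying Hecke relation $\mathcal{A}(pn)\equiv\mathcal{A}(n/p)\pmod 2$ actually yields $N_2=3^m\bigl(8p^{k-1}n+\tfrac{8\delta+i}{p}\bigr)$, which coincides with the printed $N_2$ only for $m=1$; the printed form appears to be off for $m=0$.) To close the gap you must either work with that corrected $N_2$---in which case your computation goes through verbatim for both values of $m$---or dispatch the $m=0$ case the paper's way, using $\mathcal{J}^*_6(8n+1)\equiv\mathcal{J}_6(24n+3)\pmod 2$ together with Corollary~\ref{coroj6(24n+3)}, noting that $p^{2k}(8n+1)\equiv 1\pmod 8$ so the same transfer applies at the top of the tower.
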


Next, we give a list of Fourier coefficients of  $j^*_{6}(\tau)$ and $j_6(\tau)$, which have the same parity under certain conditions. It is interesting to see these patterns in the Fourier coefficients of these Hauptmoduln.

\begin{theorem}\label{Thmj6=j*6} For any positive integer $n$, we have
\begin{itemize}
     \item[(a).] $(2n)$-th Fourier coefficients of  $j^*_{6}(\tau)$ and $j_6(\tau)$ are always even.
     \item[(b).] $(4n+1)$-th Fourier coefficients of $j_6(\tau)$ are always even.
     \item[(c).] $(24n+11)$-th and $(24n+19)$-th Fourier coefficients of  $j^*_{6}(\tau)$ and $j_6(\tau)$ are always even.
    \item[(d).] $(4n-1)$-th Fourier coefficients of  $j^*_{6}(\tau)$ and $j_6(\tau)$ have the same parity.
    \item[(e).] $(8n+1)$-th Fourier coefficients of  $j^*_{6}(\tau)$ and $(24n+3)$-th Fourier coefficients of $j_6(\tau)$ have the same parity.
\end{itemize}
\end{theorem}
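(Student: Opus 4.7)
The strategy is to reduce both Hauptmoduln modulo $2$ using the elementary identity $(1-q^n)^2\equiv 1-q^{2n}\pmod{2}$, equivalently $f_k^2\equiv f_{2k}\pmod 2$, where $f_k:=\prod_{n\ge 1}(1-q^{kn})$. The constants $6$ and $2^6$ in $j_6^{*}(\tau)=B^6+6+2^6 C^6$ vanish modulo $2$, so
\[
j_6^{*}(\tau)\equiv q^{-1}\,\frac{f_1^6 f_3^6}{f_2^6 f_6^6}\equiv q^{-1}\,\frac{f_2 f_6}{f_8 f_{24}}\pmod 2,
\]
and a parallel reduction of $j_6(\tau)=A^3-3$ yields
\[
j_6(\tau)\equiv 1+q^{-1}\,\frac{f_3 f_4 f_6^{3}}{f_1 f_{12}^{4}}\pmod 2.
\]

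Part (a) for $j_6^{*}$ is immediate from the first reduction: every exponent appearing in $f_2 f_6/(f_8 f_{24})$ is even, so multiplication by $q^{-1}$ leaves only odd powers of $q$. For $j_6$ I would isolate the factor $f_3/f_1$ and apply a Ramanujan-type 2-dissection (arising, e.g., from the 2-dissections of $\psi(q)=f_2^2/f_1$) to separate the odd- and even-indexed coefficients; this yields part (a) for $j_6$ and, after one further 2-dissection of the odd part, part (b). Part (c) then follows by a 3-dissection of the resulting mod-$2$ expressions: the class $8n+3$ decomposes into $24n+3,\,24n+11,\,24n+19$ as $n$ runs over residues mod $3$, and the 3-dissection exhibits the $24n+11$ and $24n+19$ components as vanishing modulo $2$ for both $j_6$ and $j_6^{*}$.

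For parts (d) and (e) the approach is direct comparison of the two mod-$2$ expressions. For (d), writing $(j_6-j_6^{*})\pmod 2$ as a single eta-quotient and analyzing its support in $q$-exponents should show the difference has no term of index $\equiv -1\pmod 4$, establishing the equality of parities. For (e), the relevant claim is $j_6(\tau)\equiv j_6^{*}(3\tau)\pmod 2$ on the sub-progression $n\equiv 3\pmod{24}$: after the substitution $q\to q^3$, the $q^{8n+1}$ coefficients of $j_6^{*}(\tau)$ land at $q^{24n+3}$, and by parts (a)--(c) the remaining nonzero mod-$2$ terms of $j_6(\tau)$ are concentrated away from this class, so the comparison reduces to an explicit eta-product identity in the residue class $3\pmod{24}$.

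The main technical obstacle is handling the factor $f_3/f_1$ in the reduction of $j_6$: unlike the reduction of $j_6^{*}$, this factor carries both odd- and even-indexed coefficients and has no single-term 2-dissection. Combining the 2- and 3-dissections and tracking the residues of the resulting eta-products modulo $24$ is the delicate step, and part (e) is the subtlest because the comparison must be exact on a $24$-term arithmetic progression rather than just a residue class modulo a prime.
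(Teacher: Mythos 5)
Your mod-$2$ reductions of both Hauptmoduln are correct, and your overall strategy --- reduce modulo $2$ via $(q^k;q^k)_\infty^2\equiv(q^{2k};q^{2k})_\infty$ and then iterate $2$- and $3$-dissections --- is the same as the paper's. Parts (a) and (b) go through essentially as you describe; the technical obstacle you flag (the factor $f_3/f_1$) is avoided in the paper by instead invoking Yao's two-term $2$-dissections of $(q;q)_\infty/(q^3;q^3)_\infty^3$ and $(q^3;q^3)_\infty^3/(q;q)_\infty$, which lead quickly to $\sum_{n}\mathcal{F}_{6}(8n+4)q^n\equiv(q^3;q^3)_\infty^3\pmod 2$; since this series is supported on powers $q^{3n}$, its $3$-dissection is trivial and part (c) for $j_6$ follows at once.

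The genuine gaps are in (c) for $j_6^{*}$, (d) and (e). For (d), a difference of two eta-quotients is not in general an eta-quotient, so the step ``write $(j_6-j_6^{*})\bmod 2$ as a single eta-quotient and analyze its support'' is not available as stated; what is actually needed (and what the paper proves) is that the two $4$-dissections collapse to the \emph{same} series, namely $\sum_{n}\mathcal{F}^*_{6}(4n)q^n\equiv\sum_{n}\mathcal{F}_{6}(4n)q^n\equiv (q;q)_\infty^3/(q^3;q^3)_\infty^3\pmod 2$, obtained by dissecting each generating function twice (for the $j_6^{*}$ side one uses the Xia--Yao $2$-dissection of $1/\bigl((q;q)_\infty(q^3;q^3)_\infty\bigr)$). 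That identity is also what transfers (c) from $j_6$ to $j_6^{*}$. For (e), your reformulation via $j_6^{*}(3\tau)$ is an exact restatement of the claim, not a proof; the missing content is the second common-series identity $\sum_{n}\mathcal{F}^*_{6}(8n+2)q^n\equiv\sum_{n}\mathcal{F}_{6}(24n+4)q^n\equiv(q;q)_\infty^3\pmod 2$, which requires one further dissection on each side (using the $2$-dissection of $(q^3;q^3)_\infty^3/(q;q)_\infty$ on the $j_6^{*}$ side and a $3$-dissection of $(q^3;q^3)_\infty^3$ on the $j_6$ side). Until these two common-series identities are exhibited, your plan for (c)--(e) does not close.
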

On the same spirit, using  a formula associated with Rogers-Ramanujan continued fraction, we prove the following identities for the Fourier coefficients of $j_{10}(\tau)$ and $j^*_{10}(\tau)$.

\begin{theorem}\label{j10(z)=j*10(z).1.1}
Let $i\in\{7,23\}$ and $n$ be a positive number. Then $(40n+i)$-th 
Fourier coefficients of both $j_{10}(\tau)$ and $j^*_{10}(\tau)$ are even numbers.
\end{theorem}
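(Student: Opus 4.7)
The plan is to handle $j_{10}$ and $j_{10}^{*}$ in parallel, reducing each modulo $2$ to a tractable eta-quotient via $(q^{n};q^{n})_\infty^{2^{k}}\equiv(q^{2^{k}n};q^{2^{k}n})_\infty\pmod 2$ and then exploiting arithmetic structure. After routine simplification I expect
\[
j_{10}(\tau)+1\equiv \frac{1}{q}\cdot\frac{(q;q)_\infty}{(q^{5};q^{5})_\infty^{5}}\pmod 2,\qquad j_{10}^{*}(\tau)\equiv \frac{1}{q}\,(q^{4};q^{8})_\infty(q^{20};q^{40})_\infty\pmod 2,
\]
the second reduction being immediate because the constants $4$ and $2^{4}$ in the definition of $j_{10}^{*}$ vanish mod~$2$.

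For $j_{10}$, Euler's pentagonal number theorem supplies $(q;q)_\infty=\sum_{k\in\mathbb{Z}}(-1)^{k}q^{k(3k-1)/2}$, and a direct check gives $k(3k-1)/2\pmod 5\in\{0,1,2\}$. Since $(q^{5};q^{5})_\infty^{5}$ is a power series in $q^{5}$, the quotient inherits this residue restriction, so after the $q^{-1}$ shift $\mathcal{J}_{10}(n)$ vanishes mod~$2$ whenever $n\equiv 2$ or $3\pmod 5$. Both $40n+7\equiv 2$ and $40n+23\equiv 3\pmod 5$, so this half of the theorem follows at once.

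For $j_{10}^{*}$, set $Q=q^{4}$ so that the desired coefficients become $[Q^{10n+2}]$ and $[Q^{10n+6}]$ of $(Q;Q^{2})_\infty(Q^{5};Q^{10})_\infty$. Writing $(Q;Q^{2})_\infty = T(Q)\cdot (Q^{5};Q^{10})_\infty$ with $T(Q):=(Q,Q^{3},Q^{7},Q^{9};Q^{10})_\infty$ and using $(Q^{5};Q^{10})_\infty^{2}\equiv(Q^{10};Q^{20})_\infty\pmod 2$, the problem reduces — since $(Q^{10};Q^{20})_\infty$ is a power series in $Q^{10}$ — to showing that $[Q^{m}]T(Q)\equiv 0\pmod 2$ for every $m\equiv 2,6\pmod{10}$. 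Two applications of Jacobi's triple product (this is the formula associated with the Rogers-Ramanujan continued fraction hinted at in the theorem) give
\[
(Q,Q^{9};Q^{10})_\infty(Q^{10};Q^{10})_\infty=\sum_{n\in\mathbb{Z}}(-1)^{n}Q^{5n^{2}-4n},\qquad (Q^{3},Q^{7};Q^{10})_\infty(Q^{10};Q^{10})_\infty=\sum_{n\in\mathbb{Z}}(-1)^{n}Q^{5n^{2}-2n}.
\]
Multiplying these and reducing $(Q^{10};Q^{10})_\infty^{2}\equiv(Q^{20};Q^{20})_\infty\pmod 2$ identifies $[Q^{N}]\bigl(T(Q)(Q^{20};Q^{20})_\infty\bigr)\pmod 2$ with the parity of $\#\{(n,m)\in\mathbb{Z}^{2}:5n^{2}-4n+5m^{2}-2m=N\}$.

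The concluding step is a Gaussian-integer computation. Completing the square via $x=5n-2$, $y=5m-1$ rewrites the equation as $x^{2}+y^{2}=5(N+1)$ with the residue constraint $(x,y)\equiv(3,4)\pmod 5$. For $N\equiv 2,6\pmod{10}$ the quantity $L:=N+1\equiv 3,7\pmod{10}$ is odd and coprime to $5$, so with $5=(2+i)(2-i)$ in $\mathbb{Z}[i]$ and the units $\{\pm 1,\pm i\}$ cycling the four relevant residue classes $\{(3,4),(1,3),(2,1),(4,2)\}$ simply transitively, the count equals $r_{2}(L)/4=\sum_{d\mid L}\chi_{-4}(d)$, where $\chi_{-4}$ is the nontrivial Dirichlet character modulo~$4$. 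For odd $L$, $\sum_{d\mid L}\chi_{-4}(d)\equiv\tau(L)\pmod 2$; since the squares modulo $10$ lie in $\{0,1,4,5,6,9\}$, neither $L\equiv 3\pmod{10}$ nor $L\equiv 7\pmod{10}$ is a perfect square, so $\tau(L)$ is even. Propagating this vanishing through the residue-class-preserving series $(Q^{20};Q^{20})_\infty^{-1}$ and $(Q^{10};Q^{20})_\infty$ completes the $j_{10}^{*}$ half. I anticipate the concluding Gaussian-integer/character-sum step to be the main obstacle, since it is where the specific choices $i\in\{7,23\}$ enter essentially — forcing $L\pmod{10}$ into a non-square residue class.
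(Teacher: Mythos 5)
Your proof is correct, but it takes a genuinely different route from the paper's. The paper first uses the $2$-dissection formulas of Xia--Yao and Tang--Xia to show that $\sum\mathcal{F}_{10}(8n)q^n$ and $\sum\mathcal{F}^{*}_{10}(8n)q^n$ are \emph{both} congruent to the single series $(q^2;q^2)_\infty/(q^5;q^5)_\infty$ modulo $2$, and then applies the Rogers--Ramanujan $5$-dissection of $(q;q)_\infty$ (Lemma~\ref{R(q)}) once to that series to see that no exponents $\equiv 1,3\pmod 5$ occur; this simultaneously yields the identification $\mathcal{F}_{10}(8n)\equiv\mathcal{F}^{*}_{10}(8n)\pmod 2$ that the paper reuses in the following corollary. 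You instead treat the two Hauptmoduln separately and avoid the imported dissection lemmas entirely: for $j_{10}$ you reduce to $(q;q)_\infty/(q^5;q^5)_\infty^5$ and invoke the pentagonal number theorem (generalized pentagonal numbers lie in $\{0,1,2\}\bmod 5$, since $24\cdot\frac{k(3k-1)}{2}+1=(6k-1)^2$ must be a square mod $5$), which in fact proves the stronger statement that $\mathcal{J}_{10}(n)$ is even for \emph{every} $n\equiv 2,3\pmod 5$, not just $n=40n'+7,\,40n'+23$; for $j^{*}_{10}$ you reduce to $(Q,Q^3,Q^7,Q^9;Q^{10})_\infty$ times a series in $Q^{10}$ and run a Jacobi-triple-product/sum-of-two-squares count, where the key point $\sum_{d\mid L}\chi_{-4}(d)\equiv\tau(L)\pmod 2$ with $L\equiv 3,7\pmod{10}$ a non-square does the work. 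I checked the delicate step: the dihedral group of order $8$ (units of $\mathbb{Z}[i]$ together with conjugation) acts freely on representations $x^2+y^2=5L$ for $L$ odd and prime to $5$ and permutes the eight admissible residue classes mod $5$ simply transitively, so the class $(3,4)$ indeed receives exactly $r_2(5L)/8=r_2(L)/4$ representations; your count is right. The trade-off: the paper's argument is shorter for $j^{*}_{10}$ and produces a relation between the two Hauptmoduln that it needs later, while yours is self-contained, more elementary for $j_{10}$, and strictly stronger there.
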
  

We use {\it Mathematica} \cite{mathematica} for necessary computations.
\section{Preliminaries}
In this section, we recall some definitions, and facts relating to the arithmetic of classical modular forms and for  more details, one can consult \cite{koblitz1993, ono2004}.  Let $\mathbb{H}$ denotes the  upper-half plane.

The complex vector space of modular forms of weight $\ell$ (a positive integer) with respect to a congruence subgroup $\Gamma$ will be denoted by $M_{\ell}(\Gamma)$.
\begin{definition}\cite[Definition 1.15]{ono2004}
	Let $\chi$ be a Dirichlet character modulo $N$ (a positive integer). Then a modular form $f\in M_{\ell}(\Gamma_1(N))$ has Nebentypus character $\chi$ if
	$$f\left( \frac{az+b}{cz+d}\right)=\chi(d)(cz+d)^{\ell}f(\tau)$$ for all $z\in \mathbb{H}$ and all $\begin{pmatrix}
	a  &  b \\
	c  &  d      
	\end{pmatrix} \in \Gamma_0(N)$. The space of such modular forms is denoted by $M_{\ell}(\Gamma_0(N), \chi)$.  Here $\Gamma_0(N)$ will as usual be the principal congruence subgroup of level $N$.
\end{definition}
 
\par 
We now recall two theorems from \cite[p.\ 18]{ono2004} that help us  check the modularity of eta-quotients  which show up  in our study.
\begin{theorem}\cite[Theorem 1.64]{ono2004}\label{thm_ono1} If $f(\tau)=\displaystyle\prod_{\delta\mid N}\eta(\delta \tau)^{r_\delta}$ 
	is an eta-quotient such that 
\begin{align*}	
\ell&=\displaystyle\frac{1}{2}\sum_{\delta\mid N}r_{\delta}\in \mathbb{Z},\\
\sum_{\delta\mid N} \delta r_{\delta}&\equiv 0 \pmod{24}\\
 \intertext{and}
\sum_{\delta\mid N} \frac{N}{\delta}r_{\delta}&\equiv 0 \pmod{24}.
\end{align*}
Then 
$$
f\left( \frac{az+b}{cz+d}\right)=\chi(d)(cz+d)^{\ell}f(\tau)
$$
	for every  $\begin{pmatrix}
	a  &  b \\
	c  &  d      
	\end{pmatrix} \in \Gamma_0(N)$. Here 
	$$\chi(d):=\left(\frac{(-1)^{\ell} \prod_{\delta\mid N}\delta^{r_{\delta}}}{d}\right).$$
	\end{theorem}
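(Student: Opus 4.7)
The plan is to derive the transformation law for $f$ from the transformation law of the Dedekind eta function itself. Recall that for any $\gamma' = \begin{pmatrix} \alpha & \beta \\ \gamma & \delta \end{pmatrix} \in \mathrm{SL}_2(\mathbb{Z})$ (with $\gamma > 0$, say), one has $\eta(\gamma'\tau) = \varepsilon(\gamma')(\gamma\tau+\delta)^{1/2}\eta(\tau)$, where $\varepsilon(\gamma')$ is an explicit $24$th root of unity given in terms of a Dedekind sum. Given $\gamma = \begin{pmatrix} a & b \\ c & d \end{pmatrix} \in \Gamma_0(N)$ and a divisor $\delta$ of $N$, the first step is to note that
$$\delta\gamma\tau = \frac{a(\delta\tau) + b\delta}{(c/\delta)(\delta\tau) + d},$$
so the matrix $\gamma_\delta := \begin{pmatrix} a & b\delta \\ c/\delta & d \end{pmatrix}$ has determinant $1$ and integer entries (since $\delta \mid N \mid c$), hence lies in $\mathrm{SL}_2(\mathbb{Z})$.

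Applying the eta transformation formula to $\eta(\delta\gamma\tau)$ via $\gamma_\delta$ acting on $\delta\tau$ gives
$$\eta(\delta\gamma\tau) = \varepsilon(\gamma_\delta)\bigl((c/\delta)(\delta\tau)+d\bigr)^{1/2}\eta(\delta\tau) = \varepsilon(\gamma_\delta)(c\tau+d)^{1/2}\eta(\delta\tau).$$
Raising to the $r_\delta$ power and taking the product over $\delta \mid N$ yields
$$f(\gamma\tau) = \Bigl(\prod_{\delta\mid N}\varepsilon(\gamma_\delta)^{r_\delta}\Bigr)(c\tau+d)^{\frac{1}{2}\sum r_\delta} f(\tau).$$
The hypothesis $\ell = \tfrac{1}{2}\sum r_\delta \in \mathbb{Z}$ renders $(c\tau+d)^\ell$ single-valued, so what remains is to identify the scalar prefactor with $\chi(d) = \left(\dfrac{(-1)^\ell \prod_\delta \delta^{r_\delta}}{d}\right)$.

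The main obstacle is the bookkeeping of $24$th roots of unity in the product $\prod_\delta \varepsilon(\gamma_\delta)^{r_\delta}$. Using the explicit formula $\varepsilon(\gamma_\delta) = \exp\bigl(\pi i\bigl[\tfrac{a+d}{12(c/\delta)} - s(d, c/\delta) - \tfrac{1}{4}\bigr]\bigr)$ (with $s(\cdot,\cdot)$ the Dedekind sum) and applying Dedekind reciprocity, the combined exponent splits naturally into three pieces: one proportional to $\sum_\delta \delta\, r_\delta$, one proportional to $\sum_\delta (N/\delta)\, r_\delta$, and a residual piece that reorganises, via quadratic reciprocity and standard identities for Jacobi/Kronecker symbols, into exactly the symbol $\left(\dfrac{(-1)^\ell\prod_\delta \delta^{r_\delta}}{d}\right)$. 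The two divisibility hypotheses $\sum \delta r_\delta \equiv 0 \pmod{24}$ and $\sum (N/\delta)r_\delta \equiv 0 \pmod{24}$ are precisely what is needed to kill the first two pieces modulo $24$th roots of unity, leaving $\chi(d)$.

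This is the classical Gordon--Hughes--Newman argument as presented in Koblitz's and Ono's books; the tricky part is not the overall shape of the calculation but the careful reduction of the Dedekind-sum combination and the correct identification of the Kronecker symbol (including signs and the factor $(-1)^\ell$). Since the statement is quoted verbatim from \cite{ono2004}, in practice one cites the reference and verifies only that the hypotheses on $\ell$ and the two congruences are satisfied in each application, but the plan above indicates the underlying mechanism.
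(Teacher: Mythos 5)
The paper offers no proof of this statement at all: it is quoted verbatim from \cite[Theorem 1.64]{ono2004} and used as a black box, so there is no internal argument to compare yours against. That said, your outline is the standard Gordon--Hughes--Newman argument that underlies the cited result, and the structural points are correctly identified: the conjugated matrices $\gamma_\delta = \begin{pmatrix} a & b\delta \\ c/\delta & d \end{pmatrix}$ do lie in $\mathrm{SL}_2(\mathbb{Z})$ precisely because $\delta \mid N \mid c$, the half-integral powers of $(c\tau+d)$ assemble into $(c\tau+d)^{\ell}$ exactly because $\ell = \tfrac{1}{2}\sum r_\delta \in \mathbb{Z}$, and the two congruences modulo $24$ are what annihilate the $24$th-root-of-unity contributions coming from the terms of $\varepsilon(\gamma_\delta)$ proportional to $\delta$ and to $N/\delta$.

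The one caveat is that your proposal is a roadmap rather than a proof: the entire nontrivial content of the theorem is the assertion that $\prod_{\delta\mid N}\varepsilon(\gamma_\delta)^{r_\delta}$ reduces to the Kronecker symbol $\left(\frac{(-1)^{\ell}\prod_\delta \delta^{r_\delta}}{d}\right)$, and you assert this ``reorganises via quadratic reciprocity and standard identities'' without carrying out the Dedekind-sum reduction. That step is where all the delicate sign and parity bookkeeping lives (in particular the separate treatment of $c$ even versus odd in the eta multiplier, and the emergence of the factor $(-1)^{\ell}$ inside the symbol), and it cannot be waved through if one wants an actual proof. Since the paper itself only cites the theorem, deferring to \cite{ono2004} is entirely appropriate in context; just be aware that what you have written verifies the hypotheses' roles but does not establish the identity for $\chi(d)$.
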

 Suppose that $f$ is an eta-quotient satisfying the conditions of Theorem \ref{thm_ono1}.  Over and above, if $f$ is also holomorphic at all of the cusps of $\Gamma_0(N)$, then $f\in M_{\ell}(\Gamma_0(N), \chi)$. To check the holomorphicity at cusps of $f(\tau)$ it suffices to check that the orders at the cusps are non-negative. The necessary criterion for determining orders of an eta-quotient at cusps is given in the next result.
\begin{theorem}\cite[Theorem 1.65]{ono2004}\label{thm_ono1.1}
Let $c, d,$ and $N$ are positive integers with $d\mid N$ and $\gcd(c, d)=1$. If $f(\tau)$ is an eta-quotient satisfying the conditions of Theorem~\ref{thm_ono1} for $N$, then the order of vanishing of $f(\tau)$ at the cusp $\frac{c}{d}$ 
	is $$\frac{N}{24}\sum_{\delta\mid N}\frac{\gcd(d,\delta)^2r_{\delta}}{\gcd(d,\frac{N}{d})d\delta}.$$
\end{theorem}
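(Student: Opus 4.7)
The plan is to compute the local expansion of $f(\tau) = \prod_{\delta \mid N} \eta(\delta\tau)^{r_\delta}$ at the cusp $c/d$ directly, using the transformation law of the Dedekind eta function together with the fact (guaranteed by Theorem~\ref{thm_ono1}) that the combined automorphy factor is single-valued.

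First I would fix a matrix $\gamma_0 = \begin{pmatrix} c & -y \\ d & x \end{pmatrix} \in \mathrm{SL}_2(\mathbb{Z})$ with $cx + dy = 1$, which sends $i\infty$ to the cusp $c/d$. For each divisor $\delta \mid N$, I must understand the behaviour of $\eta(\delta \gamma_0 \tau)$ as $\tau \to i\infty$. The key algebraic step is the factorization
$$\begin{pmatrix} \delta & 0 \\ 0 & 1 \end{pmatrix} \gamma_0 \;=\; A_\delta \cdot B_\delta,$$
with $A_\delta \in \mathrm{SL}_2(\mathbb{Z})$ and $B_\delta = \begin{pmatrix} g & \ast \\ 0 & \delta/g \end{pmatrix}$ upper triangular of determinant $\delta$, where $g = \gcd(d, \delta)$; the existence of such an $A_\delta$ uses precisely that $\gcd(\delta c/g,\, d/g) = 1$. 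The eta functional equation applied to $A_\delta$ then yields
$$\eta(\delta \gamma_0 \tau) \;=\; \varepsilon(A_\delta)\,\bigl(c_{A_\delta}(B_\delta\tau) + d_{A_\delta}\bigr)^{1/2}\, \eta(B_\delta \tau),$$
and direct expansion gives $\eta(B_\delta\tau) = e^{2\pi i (g^2\tau + g b)/(24\delta)}\bigl(1 + O(e^{2\pi i \tau})\bigr)$ as $\tau \to i\infty$.

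Next I would take the product over $\delta \mid N$ with exponents $r_\delta$. The two mod-$24$ congruences and the half-integrality condition from Theorem~\ref{thm_ono1} are exactly what make the product of the $(c\tau+d)^{1/2}$-factors combine into an honest integer power of $(d\tau + x)$ and the $24$th-root multipliers $\varepsilon(A_\delta)^{r_\delta}$ combine into a single $24$th root of unity, so the result is a bona fide $q$-expansion. Reading off the leading exponent in $q = e^{2\pi i \tau}$ produces
$$\frac{1}{24} \sum_{\delta \mid N} \frac{\gcd(d, \delta)^2}{\delta}\, r_\delta.$$
To conclude, I would convert this to an order in the local uniformizer at the cusp: the width of $c/d$ in $\Gamma_0(N)$ is $h = N/\bigl(d \cdot \gcd(d, N/d)\bigr)$, so the uniformizer is $e^{2\pi i \tau / h}$, and multiplying the exponent above by $h$ yields the claimed formula
$$\frac{N}{24} \sum_{\delta \mid N} \frac{\gcd(d, \delta)^2 \, r_\delta}{\gcd(d, N/d)\, d\, \delta}.$$

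The main obstacle will be the bookkeeping in the factorization $\delta \gamma_0 = A_\delta B_\delta$ and in verifying that the various automorphy prefactors $\bigl(c_{A_\delta}(B_\delta\tau)+d_{A_\delta}\bigr)^{1/2}$ interact correctly when multiplied over all $\delta$; this is where one must invoke the mod-$24$ hypotheses from Theorem~\ref{thm_ono1}. The exponent of vanishing itself, by contrast, is insensitive to the multipliers $\varepsilon(A_\delta)$ and to the precise choice of $A_\delta$, and emerges from a clean determinant/gcd computation.
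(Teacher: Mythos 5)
The paper does not prove this statement: it is quoted verbatim from Ono's book (Theorem 1.65 there, going back to Ligozat), so there is no in-paper proof to compare against. Your sketch is the standard argument and is correct in outline: writing $\left(\begin{smallmatrix}\delta & 0\\ 0 & 1\end{smallmatrix}\right)\gamma_0 = A_\delta B_\delta$ with $A_\delta\in\mathrm{SL}_2(\mathbb{Z})$ having primitive first column $(\delta c/g,\, d/g)^{T}$, $g=\gcd(d,\delta)$, the leading exponent $g^2/(24\delta)$ read off from $\eta(B_\delta\tau)$, and the cusp width $N/\bigl(d\gcd(d,N/d)\bigr)=N/\gcd(d^2,N)$ combine exactly into the stated formula. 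You are also right that the mod-$24$ hypotheses play no role in the exponent itself and only enter when one wants the product of eta-multipliers and half-integral automorphy factors to collapse to a character, so your proposal is a faithful reconstruction of the standard proof.
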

A Hecke operator acts as a natural linear transformation on the spaces of modular forms. Let us recall the definition of Hecke operators for integer weight modular forms.  
\begin{definition}
	Let $m$ be a positive integer and $f(\tau) = \displaystyle\sum_{n=0}^{\infty} a(n)q^n \in M_{\ell}(\Gamma_0(N),\chi)$. The Hecke operator $T_m$ acts on $f(\tau)$ by 
	\begin{align*}
	f(\tau)|T_m := \sum_{n=0}^{\infty} \left(\sum_{d\mid \gcd(n,m)}\chi(d)d^{\ell-1}a\left(\frac{nm}{d^2}\right)\right)q^n.
	\end{align*}
	In particular, if $m=p$ is a  prime, then
	\begin{align}\label{hecke1}
	f(\tau)|T_p := \sum_{n=0}^{\infty} \left(a(pn)+\chi(p)p^{\ell-1}a\left(\frac{n}{p}\right)\right)q^n.
	\end{align}
\end{definition}
\begin{definition}\label{hecke2}
	A modular form $f(\tau) \in M_{\ell}(\Gamma_0(N),\chi)$ is called a Hecke eigenform if for every $m\geq2$ there exists a complex number $\lambda(m)$ for which 
	\begin{align}\label{hecke3}
	f(\tau)|T_m = \lambda(m)f(\tau).
	\end{align}
\end{definition}
Now we state a consequence of the binomial theorem which we deliberately used in many places. For any positive integer $k$, we have 
$$(q^{2k};q^{2k})_{\infty}\equiv(q^{k};q^{k})_{\infty}^2\pmod 2,$$
where the $q$-shifted
factorial $\displaystyle (a; q)_{\infty}= \prod_{j=0}^{\infty}(1-aq^j),~$ $|q|<1$.

\section{ Arithmetic properties  of the Fourier coefficients of  \texorpdfstring{$j_{6}(\tau)$}{j6} and  \texorpdfstring{$j^{*}_{6}(\tau)$}{j*6}}\label{Sect:j6 and j6*}
In this section, we consider the hauptmoduln \texorpdfstring{$j_{6}(\tau)$}{j6} and  \texorpdfstring{$j^{*}_{6}(\tau)$}{j*6}. First, we study the distribution property and divisibility of the Fourier coefficients of \texorpdfstring{$j_{6}(\tau)$}{j6}. Then we obtain the results for the Fourier coefficients of \texorpdfstring{$j^{*}_{6}(\tau)$}{j*6} as stated in Section~\ref{intro}.

Recall that the hauptmodul \texorpdfstring{$j_{6}(\tau)$}{j6} has the following form
\begin{align*}
j_6(\tau):&=\left(\frac{\eta(2\tau)\eta(3\tau)^3}{\eta(\tau)\eta(6\tau)^3}\right)^3-3\\
&=\frac{1}{q}\cdot\left(\frac{(q^2;q^2)_{\infty}(q^3;q^3)_{\infty}^3}{(q;q)_{\infty}(q^{6};q^{6})_{\infty}^3}\right)^3-3\\ 
&=\frac{1}{q}+6 q + 4 q^2 - 3 q^3 - 12 q^4 - 8 q^5 + 12 q^6 + 30 q^7+\cdots\\
&=\frac{1}{q}+\sum_{n=1}^{\infty} \mathcal{J}_{6}(n)q^n.	
\end{align*}
Let us define \begin{align*}
\sum_{n=0}^{\infty}\mathcal{F}_{6}(n)q^n&= \left(\frac{(q^2;q^2)_{\infty}(q^3;q^3)_{\infty}^3}{(q;q)_{\infty}(q^{6};q^{6})_{\infty}^3}\right)^3\\
&=1 + 3 q + 6 q^2 + 4 q^3 - 3 q^4 - 12 q^5 - 8 q^6 + 12 q^7 + 30 q^8+\cdots.
\end{align*}
Then for any positive integer $n$,
\begin{align}\label{j6n=f6(n+1)}
  \mathcal{J}_{6}(n)=\mathcal{F}_{6}(n+1).
\end{align}
Next we prove the following lemma for $\mathcal{F}_{6}(n)$ which would be useful later.
\begin{lemma}\label{f624n+4=tau}For any positive integer $n$, we have 
\begin{align*}
\sum_{n=0}^{\infty}\mathcal{F}_{6}(24n+4)q^{8n+1}&\equiv \eta(8\tau)\eta(16\tau) \pmod 2,
\end{align*} 
where the eta~product $\eta(8\tau)\eta(16\tau)\in S_1\left(\Gamma_0(128), (\frac{-2}{\bullet})\right)$.
\end{lemma}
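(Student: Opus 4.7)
The plan is to establish the mod-2 $q$-series congruence and the modularity claim separately.

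For the congruence, I would first reduce the generating function modulo 2. Applying $(q^{2k};q^{2k})_\infty \equiv (q^k;q^k)_\infty^2 \pmod 2$ (stated in the Preliminaries) to the eta-quotient $\frac{(q^2;q^2)_\infty^3(q^3;q^3)_\infty^9}{(q;q)_\infty^3(q^6;q^6)_\infty^9}$ yields
$$
\sum_{n \geq 0}\mathcal{F}_6(n)q^n \equiv \frac{(q;q)_\infty^3}{(q^3;q^3)_\infty^9} \equiv \frac{(q;q)_\infty^3}{(q^3;q^3)_\infty(q^{24};q^{24})_\infty} \pmod 2,
$$
using in the final step that $(q^3;q^3)_\infty^8 \equiv (q^{24};q^{24})_\infty \pmod 2$. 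Jacobi's identity then gives $(q;q)_\infty^3 \equiv \sum_{m \geq 0} q^{T_m} \pmod 2$ with $T_m = m(m+1)/2$. I would perform a two-stage dissection to extract the progression $n \equiv 4 \pmod{24}$. In the first stage (mod 3), since the denominator is a power series in $q^3$, only triangular numbers $T_m \equiv 1 \pmod 3$ contribute, i.e., $m \equiv 1 \pmod 3$; the identity $T_{3k+1} = 9T_k + 1$ together with Jacobi applied at $q^9$ collapses the surviving sum to $q(q^9;q^9)_\infty^3 \pmod 2$. Substituting $q^3 = Q$ leaves the task of extracting the $s \equiv 1 \pmod 8$ sub-series of $H(Q) := \frac{(Q^3;Q^3)_\infty^3}{(Q;Q)_\infty(Q^8;Q^8)_\infty} \pmod 2$. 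A parallel dissection, using Jacobi at $Q^3$ and an analogous telescoping argument, produces $q(q^8;q^8)_\infty^3 \pmod 2$. Finally, $(q^{16};q^{16})_\infty \equiv (q^8;q^8)_\infty^2 \pmod 2$ gives
$$
q(q^8;q^8)_\infty^3 \equiv q(q^8;q^8)_\infty(q^{16};q^{16})_\infty = \eta(8\tau)\eta(16\tau) \pmod 2,
$$
closing the congruence.

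For the modularity, I would apply Theorem~\ref{thm_ono1} to $f(\tau) = \eta(8\tau)\eta(16\tau)$ with $N = 128$, $r_8 = r_{16} = 1$, and $r_\delta = 0$ otherwise. The weight is $\ell = \frac{1}{2}(1+1) = 1$; the divisibility conditions $\sum \delta r_\delta = 8+16 = 24 \equiv 0 \pmod{24}$ and $\sum (N/\delta)r_\delta = 16+8 = 24 \equiv 0 \pmod{24}$ are verified; and the Nebentypus is $\chi(d) = \bigl(\frac{-1\cdot 8 \cdot 16}{d}\bigr) = \bigl(\frac{-128}{d}\bigr) = \bigl(\frac{-2}{d}\bigr)$. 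Then Theorem~\ref{thm_ono1.1} applied for each $d \mid 128$ shows, by direct evaluation of the cusp-order formula $\frac{N}{24}\sum_{\delta \mid N}\frac{\gcd(d,\delta)^2 r_\delta}{\gcd(d,N/d)\,d\,\delta}$, that the order of vanishing at every cusp $c/d$ equals $1$, so $f \in S_1(\Gamma_0(128),(\frac{-2}{\bullet}))$.

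The main technical obstacle is the second dissection (extracting the $s \equiv 1 \pmod 8$ part of $H(Q)$). After the clean mod-3 sieve, one must carefully track how the triangular-number exponents distribute modulo $8$ and match the resulting sum against the target eta-product $(q^8;q^8)_\infty^3$; this is where the bulk of the formal power-series manipulations concentrate.
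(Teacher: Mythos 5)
Your reduction of the generating function to $\frac{(q;q)_\infty^3}{(q^3;q^3)_\infty(q^{24};q^{24})_\infty}$ and the mod-$3$ sieve are correct: the denominator there is a power series in $q^3$, so only the triangular numbers $T_{3k+1}=9T_k+1$ survive, and Jacobi's identity collapses the numerator to $q(q^9;q^9)_\infty^3$. The modularity verification (weight, the two divisibility conditions, the character $(\frac{-128}{d})=(\frac{-2}{d})$, and the cusp orders all equal to $1$) is also correct and matches what the paper does via Theorems \ref{thm_ono1} and \ref{thm_ono1.1}.

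The gap is in the second stage. After substituting $Q=q^3$ you must extract the exponents $\equiv 1\pmod 8$ from $H(Q)=\frac{(Q^3;Q^3)_\infty^3}{(Q;Q)_\infty(Q^8;Q^8)_\infty}$, and you assert this is a ``parallel dissection'' with an ``analogous telescoping argument.'' It is not analogous: the mod-$3$ sieve worked only because the denominator was a power series in $q^3$, so the coefficient of $q^n$ with $n\equiv 1\pmod 3$ received contributions from a single residue class of numerator terms. Here the denominator contains $(Q;Q)_\infty$, which is not a power series in $Q^8$ (nor in $Q^2$), so every residue class of the numerator $\sum_k Q^{3T_k}$ feeds into the $1\pmod 8$ part through the convolution, and no clean triangular-number sieve exists. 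Carrying out this extraction requires genuine $2$-dissection identities --- e.g.\ the formulas for $(q;q)_\infty/(q^3;q^3)_\infty^3$ and $(q^3;q^3)_\infty^3/(q;q)_\infty$ that the paper quotes from Yao and Xia--Yao --- applied iteratively, and your outline neither states nor proves any such identity; you yourself flag this step as the ``main technical obstacle'' without resolving it. This is exactly where the paper's proof does its work: it rewrites the series as $\frac{(q;q)_\infty(q^2;q^2)_\infty}{(q^3;q^3)_\infty^3(q^6;q^6)_\infty^3}$, applies the $2$-dissection of $(q;q)_\infty/(q^3;q^3)_\infty^3$ twice to reach $\sum\mathcal{F}_6(8n+4)q^n\equiv(q^3;q^3)_\infty^3\pmod 2$, and only then performs the (now trivial) $3$-dissection. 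As written, your argument reduces the lemma to an unproved claim that is essentially equivalent to the lemma itself.
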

\begin{proof} using the binomial theorem, we have
\begin{align}\label{f6n}
\notag \sum_{n=0}^{\infty}\mathcal{F}_{6}(n)q^n&= \left(\frac{(q^2;q^2)_{\infty}(q^3;q^3)_{\infty}^3}{(q;q)_{\infty}(q^{6};q^{6})_{\infty}^3}\right)^3\\
\notag &\equiv \frac{(q;q)_{\infty}^3(q^3;q^3)_{\infty}^9}{(q^6;q^6)_{\infty}^9}\pmod 2\\
\notag &\equiv \frac{(q;q)_{\infty}^3}{(q^3;q^3)_{\infty}^9}\pmod 2\\
&\equiv \frac{(q;q)_{\infty}(q^2;q^2)_{\infty}}{(q^3;q^3)_{\infty}^3(q^6;q^6)_{\infty}^3}\pmod 2.
\end{align}
In \cite[Lemma 2.5]{Yao2013JNT}, we have  following 2-dissections formula:

\begin{align}
\label{f1/f3^3}
\frac{(q;q)_{\infty}}{(q^{3};q^{3})_{\infty}^3}&= \frac{(q^2;q^2)_{\infty}(q^4;q^4)_{\infty}^2(q^{12};q^{12})_{\infty}^2}{(q^{6};q^{6})_{\infty}^7} - q \frac{(q^2;q^2)_{\infty}^3(q^{12};q^{12})_{\infty}^6}{(q^4;q^4)_{\infty}^2(q^6;q^6)_{\infty}^9}.
\end{align}
From \eqref{f6n} and \eqref{f1/f3^3}, we obtain
\begin{align}\label{f6n.2}
\notag \sum_{n=0}^{\infty}\mathcal{F}_{6}(n)q^n&\equiv \frac{(q^2;q^2)_{\infty}}{(q^6;q^6)_{\infty}^3}\left(\frac{(q^2;q^2)_{\infty}(q^4;q^4)_{\infty}^2(q^{12};q^{12})_{\infty}^2}{(q^{6};q^{6})_{\infty}^7} - q \frac{(q^2;q^2)_{\infty}^3(q^{12};q^{12})_{\infty}^6}{(q^4;q^4)_{\infty}^2(q^6;q^6)_{\infty}^9}\right)\\
&=\frac{(q^2;q^2)_{\infty}^2(q^4;q^4)_{\infty}^2(q^{12};q^{12})_{\infty}^2}{(q^{6};q^{6})_{\infty}^{10}} - q \frac{(q^2;q^2)_{\infty}^4(q^{12};q^{12})_{\infty}^6}{(q^4;q^4)_{\infty}^2(q^6;q^6)_{\infty}^{12}}.
\end{align}
Extracting the terms containing the even power of $q$ from \eqref{f6n.2} and then using the binomial theorem, we have
\begin{align}\label{f62n.1}
\sum_{n=0}^{\infty}\mathcal{F}_{6}(2n)q^n&\equiv\frac{(q;q)_{\infty}^2(q^2;q^2)_{\infty}^2(q^{6};q^{6})_{\infty}^2}{(q^{3};q^{3})_{\infty}^{10}}\equiv \frac{(q^2;q^2)_{\infty}^3}{(q^6;q^6)_{\infty}^3} \pmod 2.
\end{align}
Note that the series expansion of $\dfrac{(q^2;q^2)_{\infty}^3}{(q^6;q^6)_{\infty}^3}$ have only even powers of $q$. Thus from \eqref{f62n.1}, we obtain
\begin{align} \label{f64n}
\sum_{n=0}^{\infty}\mathcal{F}_{6}(4n)q^n&\equiv \frac{(q;q)_{\infty}(q^2;q^2)_{\infty}}{(q^3;q^3)_{\infty}^3} \pmod 2.
\end{align}
First using \eqref{f1/f3^3} and \eqref{f64n}, and then extracting the terms containing  $q^{2n+1}$, we have
\begin{align}
\label{f68n+4}\sum_{n=0}^{\infty}\mathcal{F}_{6}(8n+4)q^n&\equiv (q^3;q^3)_{\infty}^3 \pmod 2.
\end{align}
Now collecting the terms of $q^{3n}$ from \eqref{f68n+4} and using the binomial theorem, we obtain
\begin{align}
\label{f624n+4}\sum_{n=0}^{\infty}\mathcal{F}_{6}(24n+4)q^n&\equiv (q;q)_{\infty}(q^2;q^2)_{\infty} \pmod 2.
\end{align}
 
 Let us consider an eta-quotient $\eta(8\tau)\eta(16\tau)$. 
 Using Theorem \ref{thm_ono1} and Theorem \ref{thm_ono1.1}, we  see that eta-quotient $\eta(8\tau)\eta(16\tau)$ is a cusp form of weight 1, and level $128$ with a Nebentypus character, i.e., $$\eta(8\tau)\eta(16\tau)\in S_1\left(\Gamma_0(128), \chi_{-2}\right),$$ where $\chi_{-2}$ is defined by $\chi_{-2}(\bullet)=(\frac{-2}{\bullet}).$
 From \eqref{f624n+4}, we have
\begin{align*}
\sum_{n=0}^{\infty}\mathcal{F}_{6}(24n+4)q^{8n+1}&\equiv \eta(8\tau)\eta(16\tau) \pmod 2.
\end{align*} 
This concludes the lemma.
\end{proof}
Next we prove the following density result for $\mathcal{J}_{6}(n)$.
\begin{theorem}\label{Thmj6(24n+3)} For any positive integer $n$,
\begin{align*}
    \lim\limits_{X\to +\infty}\frac{\# \left\{n\leq X:\mathcal{J}_{6}(24n+3)\equiv 0 \pmod{2} \right\}}{X}=1.
\end{align*}
\end{theorem}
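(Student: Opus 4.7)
The plan is to reduce the statement to an application of Serre's theorem on the lacunarity of Fourier coefficients of holomorphic modular forms modulo a prime. The key ingredient is already supplied by Lemma~\ref{f624n+4=tau}: combined with the identity $\mathcal{J}_6(n) = \mathcal{F}_6(n+1)$ from \eqref{j6n=f6(n+1)}, it shows that if we write $\eta(8\tau)\eta(16\tau) = \sum_{m \geq 1} a(m) q^m$ with $a(m) \in \mathbb{Z}$, then
\[
\mathcal{J}_6(24n+3) \;=\; \mathcal{F}_6(24n+4) \;\equiv\; a(8n+1) \pmod{2}.
\]
So it suffices to control the parity of $a(8n+1)$ for $n \leq X$.

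Next, I would invoke Serre's theorem that for any holomorphic modular form $f = \sum a(m) q^m$ of positive integer weight with integer Fourier coefficients, and for any prime $\ell$, there exists a constant $\alpha > 0$ such that
\[
\#\{m \leq Y : a(m) \not\equiv 0 \pmod{\ell}\} \;=\; O\!\left(\frac{Y}{(\log Y)^{\alpha}}\right).
\]
Since Lemma~\ref{f624n+4=tau} already identifies $\eta(8\tau)\eta(16\tau)$ as a cusp form in $S_1(\Gamma_0(128), \chi_{-2})$ with integer Fourier coefficients, the hypotheses of Serre's theorem are satisfied with $\ell = 2$.

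Finally, because the eta product $\eta(8\tau)\eta(16\tau) = q\prod_{n \geq 1}(1-q^{8n})(1-q^{16n})$ is supported on exponents $m \equiv 1 \pmod 8$, substituting $Y = 8X+1$ in Serre's bound gives at once
\[
\#\{n \leq X : \mathcal{J}_6(24n+3) \not\equiv 0 \pmod{2}\} \;=\; O\!\left(\frac{X}{(\log X)^{\alpha}}\right),
\]
which yields the claimed density-$1$ statement (and in fact produces the stronger quantitative form advertised in the introduction after Theorem~\ref{Thmj6*(24n+3)}). The only real obstacle is conceptual, namely recognizing that the weight-$1$ eta product lies within the scope of Serre's theorem, and this has already been verified inside Lemma~\ref{f624n+4=tau} via Theorems~\ref{thm_ono1} and~\ref{thm_ono1.1}. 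Hence the proof should be essentially a one-line invocation of the lemma together with Serre's theorem.
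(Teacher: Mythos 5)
Your proposal is correct and follows essentially the same route as the paper: both reduce via Lemma~\ref{f624n+4=tau} and \eqref{j6n=f6(n+1)} to the parity of the coefficients of $\eta(8\tau)\eta(16\tau)$, invoke Serre's theorem (Theorem~\ref{Serre}) for the cusp form in $S_1\left(\Gamma_0(128), \chi_{-2}\right)$, and use the fact that the coefficients are supported on exponents $\equiv 1 \pmod 8$ (the paper makes this explicit with the bijection $n \mapsto \frac{n-1}{8}$ and the bound at $8X+1$) to transfer the $O\left(\frac{X}{(\log X)^{\alpha}}\right)$ estimate to the progression $8n+1$.
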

The following theorem of Serre is useful to prove our result.
	\begin{theorem}\cite[Theorem 2.65]{ono2004}\label{Serre}
		Let $A$ denote the subset of integer weight modular forms in $M_{k}(\Gamma_0(N),\chi)$
 whose Fourier coefficients are in $\mathcal{O}_K$, the ring of algebraic integers in a number field $K$. Suppose $ \mathcal{M} \subset \mathcal{O}_K$ is an ideal. If $f(\tau) \in A$ has Fourier expansion
	$$
f(\tau)=\sum_{n=0}^{\infty}a(n)q^n
$$
then for every $\mathcal{M}$, there is a constant $\alpha>0$  such that
$$ \# \left\{n\leq X: a(n)\not\equiv 0 \pmod{\mathcal{M}} \right\}= \mathcal{O}\left(\frac{X}{(\log{}X)^{\alpha}}\right).$$
\end{theorem}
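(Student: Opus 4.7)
The plan is to reduce the statement to the case of a prime ideal modulus and then invoke the structural theory of mod-$\ell$ modular forms together with a Chebotarev/sieve argument. First, factoring $\mathcal{M} = \prod \mathfrak{p}_i^{e_i}$ and using the Chinese Remainder Theorem, I would reduce to the case where $\mathcal{M} = \mathfrak{p}$ is a prime ideal of $\mathcal{O}_K$ lying over a rational prime $\ell$. The residue field $\mathbb{F} := \mathcal{O}_K/\mathfrak{p}$ is a finite extension of $\mathbb{F}_\ell$, and the mod-$\mathfrak{p}$ reduction $\bar f$ lives in the finite-dimensional space $M_k(\Gamma_0(N), \chi; \mathbb{F})$.

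Next, I would study the $\mathbb{F}$-subalgebra of endomorphisms of this space generated by the operators $\{T_p : p \nmid N\ell\}$ acting on $\bar f$. By the theory of mod-$\ell$ modular forms (Serre, Swinnerton-Dyer), this algebra is finite-dimensional over $\mathbb{F}$. After passing to a sufficiently large finite extension, I can decompose $\bar f$ along generalized Hecke eigenspaces. For each eigenform component, the Deligne--Serre lifting lemma produces a continuous semisimple Galois representation
\begin{align*}
\rho : \operatorname{Gal}(\overline{\mathbb{Q}}/\mathbb{Q}) \longrightarrow \operatorname{GL}_2(\overline{\mathbb{F}}_\ell)
\end{align*}
with finite image, such that $a_p \equiv \operatorname{tr}\rho(\operatorname{Frob}_p) \pmod{\mathfrak{p}}$ for all $p \nmid N\ell$. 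Applying Chebotarev's density theorem to the image of $\rho$, the set $S$ of primes $p$ for which $\operatorname{tr}\rho(\operatorname{Frob}_p) = 0$ (equivalently, $T_p \bar f \equiv 0 \pmod{\mathfrak{p}}$) has a positive Dirichlet density $\delta>0$ (and pathological images, e.g.\ dihedral or reducible, must be handled by separate ad hoc arguments that still produce a density-positive $S$).

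The third step exploits the Hecke recursion $a(pn) \equiv a_p\,a(n) - \chi(p)p^{k-1}a(n/p) \pmod{\mathfrak{p}}$, coming from \eqref{hecke1} applied to eigenforms, to propagate the vanishing: if $p \in S$ and $p \parallel n$ (i.e.\ $p \mid n$ but $p^2 \nmid n$), then a short induction shows $a(n) \equiv 0 \pmod{\mathfrak{p}}$. Consequently, every integer $n$ with $a(n) \not\equiv 0 \pmod{\mathfrak{p}}$ lies in the sparse set
\begin{align*}
\mathcal{E}(S) := \bigl\{ n \in \mathbb{N} : \text{for every } p \in S,\ p \nmid n \text{ or } p^2 \mid n \bigr\}.
\end{align*}
A classical theorem of Landau (with the quantitative refinement of Wirsing/Selberg--Delange) on integers whose prime factorization avoids a set of primes of positive density yields $\#\{n \leq X : n \in \mathcal{E}(S)\} = \mathcal{O}\!\left(X/(\log X)^{\alpha}\right)$ for some $\alpha>0$ depending on $\delta$.

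The principal obstacle is the second step: constructing the Galois representation and certifying that $S$ has positive density. The construction rests on the comparison between $\ell$-adic and mod-$\ell$ étale cohomology of modular curves, and the density assertion can degenerate in special sub-cases (dihedral images, Eisenstein/oldform contributions, or representations whose image happens to avoid the trace-zero locus). Each such case has to be separately argued to still yield a set $S$ of density bounded below, typically by twisting or by exploiting the determinant character. Once these ingredients are secured, the passage from positive density of $S$ to the final $X/(\log X)^\alpha$ bound is a routine Dirichlet-series/Tauberian computation, and the exponent $\alpha$ coming out of the argument is essentially the density $\delta(S)$.
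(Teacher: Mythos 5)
The paper does not prove this statement: it is quoted verbatim from Ono's CBMS volume (Theorem 2.65 there) and is originally due to Serre, so there is no in-paper argument to compare against. Your outline is, in substance, Serre's original proof: reduce to a prime ideal $\mathfrak{p}$ over $\ell$, attach mod-$\ell$ Galois representations to the Hecke eigencomponents via Deligne--Serre, produce a positive-density set $S$ of primes $p$ with $a(p)\equiv 0$, propagate through the Hecke recursion to show that $a(n)\equiv 0\pmod{\mathfrak{p}}$ whenever some $p\in S$ exactly divides $n$, and finish with a Landau/Selberg--Delange count of the integers avoiding exact divisibility by $S$. Two points in your sketch deserve tightening. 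First, the case analysis you anticipate for ``pathological images'' is unnecessary: the representations attached to integer-weight forms are odd, so one takes the primes whose Frobenius lies in the conjugacy class of complex conjugation $c$; since $\rho(c)^2=1$ and $\det\rho(c)=-1$, the matrix $\rho(c)$ has trace $0$ in every characteristic (including $\ell=2$, where it is unipotent and the trace is $2\equiv 0$), and Chebotarev gives positive density with no subcases, reducible or dihedral images included. Second, $\bar f$ is in general only a sum of \emph{generalized} eigenvectors, for which the single-eigenvalue recursion $a(pn)=a_p a(n)-\chi(p)p^{k-1}a(n/p)$ does not literally apply; the standard repair (Serre's) is to work with the representation valued in the finite Hecke algebra $\mathbb{T}\otimes\mathcal{O}_K/\mathfrak{p}$ and choose $p$ so that $T_p\equiv 0$ as an operator on the whole space, which annihilates $f|T_p$ regardless of the eigenstructure. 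With those repairs your sketch is a faithful account of the proof behind the citation.
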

\begin{proof}[Proof of Theorem \ref{Thmj6(24n+3)}] Suppose $\eta(8\tau)\eta(16\tau)$ has a Fourier series expansion $\sum_{n=0}^{\infty} \mathcal{A}(n)q^n.$ From Lemma~\ref{f624n+4=tau}, we have
\begin{align*}
\notag \sum_{n=0}^{\infty}\mathcal{F}_{6}(24n+4)q^{8n+1}&\equiv \eta(8\tau)\eta(16\tau) \pmod 2\\
&=\sum_{n=0}^{\infty} \mathcal{A}(n)q^n.
\end{align*} 
Therefore,
\begin{align}\label{pmodf624n+4.23}
\mathcal{F}_{6}(24n+4)\equiv \mathcal{A}(8n+1) \pmod{2}.
\end{align}
Since $\eta(8\tau)\eta(16\tau)$ is an integer weight cusp form in $S_1\left(\Gamma_0(128), (\frac{-2}{\bullet})\right)$ with integer Fourier coefficients, by Theorem \ref{Serre} we have a constant $\alpha>0$ such that
\begin{align}\label{iiits.12}
 \# \left\{n\leq X:\mathcal{A}(n)\not\equiv 0 \pmod{2} \right\}= \mathcal{O}\left(\frac{X}{(\log{}X)^{\alpha}}\right).   
\end{align}
Since $\mathcal{A}(n) = 0$ if $n\not\equiv 1\pmod{8},$ we have
$$ \left\{n\leq 8X+1:\mathcal{A}(n)\not\equiv 0 \pmod{2} \right\} = \left\{n\leq 8X+1:\mathcal{A}(n)\not\equiv 0 \pmod{2},~ n\equiv 1 \pmod{8}  \right\}.$$
If we consider a  map
$$f: \left\{n\leq 8X+1:\mathcal{A}(n)\not\equiv 0 \pmod{2},~ n\equiv 1 \pmod{8}  \right\}\rightarrow \left\{n\leq X:\mathcal{A}(8n+1)\not\equiv 0 \pmod{2} \right\},$$ defined by $f(n)=\frac{n-1}{8},$ then $f$ is bijective. Therefore,
\begin{align}
\label{iiits.11}
   \# \left\{n\leq 8X+1:\mathcal{A}(n)\not\equiv 0 \pmod{2} \right\} = \# \left\{n\leq X:\mathcal{A}(8n+1)\not\equiv 0 \pmod{2} \right\}. 
\end{align}
Using \eqref{iiits.12} and \eqref{iiits.11}, we have
\begin{align*}
 \# \left\{n\leq X:\mathcal{A}(8n+1)\not\equiv 0 \pmod{2} \right\}= \mathcal{O}\left(\frac{X}{(\log{}X)^{\alpha}}\right).   
\end{align*}
From \eqref{pmodf624n+4.23}, we obtain $$\lim\limits_{X\to +\infty}\frac{	\# \left\{n\leq X:\mathcal{F}_{6}(24n+4)\equiv 0 \pmod{2} \right\}}{X}=1.$$
Finally using \eqref{j6n=f6(n+1)} we complete the proof.
\end{proof}
Next, we obtain the following infinite families of congruences for the Fourier coefficients of $j_6(\tau)$ modulo~$2$ using the theory of Hecke eigenforms. 
\begin{theorem}\label{Thmj6(24n+3).2} 
Let $k, n$ be non-negative integers. For each $i$ with $1\leq i \leq k+1$, consider the prime numbers $p_i$ such that  $p_i\equiv 3,5,7 \pmod {8}$. Then, for any integer $j$ not divisible by ${p_{k+1}}$, we have
 	\begin{align*} 
 	\mathcal{J}_{6}\Big(3p_1^2p_2^2\dots p_{k}^2p_{k+1}\left(8p_{k+1}n+8j+p_{k+1}\right)\Big) \equiv 0 \pmod 2.
 	\end{align*}
 \end{theorem}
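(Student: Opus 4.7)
The plan is to exploit Lemma~\ref{f624n+4=tau} together with the theory of Hecke eigenforms acting on $S_1(\Gamma_0(128), \chi_{-2})$. Write $f(\tau) := \eta(8\tau)\eta(16\tau) = \sum_{n\geq 1}\mathcal{A}(n)q^n = q\prod_{m\geq 1}(1-q^{8m})(1-q^{16m})$. The lemma together with \eqref{j6n=f6(n+1)} yields
$$\mathcal{J}_{6}(24N+3) = \mathcal{F}_{6}(24N+4) \equiv \mathcal{A}(8N+1) \pmod 2,$$
and from the product expansion one sees that $\mathcal{A}(m)=0$ (as an integer) whenever $m \not\equiv 1 \pmod 8$. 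Since every $p_i$ is odd, $p_i^2 \equiv 1 \pmod 8$, so writing $M := p_1^2 \cdots p_k^2\, p_{k+1}(8p_{k+1}n + 8j + p_{k+1})$ one checks $M \equiv p_{k+1}^2 \equiv 1 \pmod 8$ and $3M = 24N+3$ for a non-negative integer $N$. Thus the theorem reduces to proving $\mathcal{A}(M) \equiv 0 \pmod 2$.

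First I would verify that $f$ is a normalised Hecke eigenform in $S_1(\Gamma_0(128), \chi_{-2})$, either by a direct dimension count of that cusp form space or by identifying $f$ with a CM theta series. Granted this, $f\mid T_p = \lambda(p)f$ with $\lambda(p) = \mathcal{A}(p)$; for a prime $p \equiv 3, 5, 7 \pmod 8$ we have $p \not\equiv 1 \pmod 8$, whence $\mathcal{A}(p)=0$ and in particular $\lambda(p) \equiv 0 \pmod 2$.

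Next I would specialise~\eqref{hecke1} modulo $2$: for each such prime $p$ and every $n \geq 1$,
$$\mathcal{A}(pn) + \chi_{-2}(p)\,\mathcal{A}(n/p) \equiv \lambda(p)\,\mathcal{A}(n) \equiv 0 \pmod 2.$$
Because $\chi_{-2}(p)= \pm 1$, this simplifies to $\mathcal{A}(pn) \equiv \mathcal{A}(n/p) \pmod 2$. Two consequences are needed: (i) if $p\nmid n$, then $\mathcal{A}(pn) \equiv 0 \pmod 2$; and (ii) replacing $n$ by $pm$ gives $\mathcal{A}(p^2 m) \equiv \mathcal{A}(m) \pmod 2$ for every $m$. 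Setting $m := 8p_{k+1}n + 8j + p_{k+1}$, the hypothesis $p_{k+1} \nmid j$ forces $p_{k+1} \nmid m$ (since $m \equiv 8j \pmod{p_{k+1}}$ and $p_{k+1}$ is odd), so (i) with $p = p_{k+1}$ yields $\mathcal{A}(p_{k+1}m) \equiv 0 \pmod 2$. Iterating (ii) with $p = p_k, p_{k-1}, \ldots, p_1$ produces $\mathcal{A}(M) \equiv \mathcal{A}(p_{k+1}m) \equiv 0 \pmod 2$, completing the proof.

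I anticipate that the principal obstacle will be the Hecke eigenform step: rigorously certifying that $\eta(8\tau)\eta(16\tau)$ is a simultaneous eigenform of every $T_p$ on $S_1(\Gamma_0(128), \chi_{-2})$. Everything after this is routine bookkeeping with the Hecke relation, combined with the key structural fact that the support of $f$ lies in the residue class $1 \pmod 8$.
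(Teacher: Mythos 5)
Your proposal is correct and follows essentially the same route as the paper: reduce via Lemma~\ref{f624n+4=tau} and \eqref{j6n=f6(n+1)} to a statement about the coefficients $\mathcal{A}(n)$ of the Hecke eigenform $\eta(8\tau)\eta(16\tau)$, use $\lambda(p)=\mathcal{A}(p)=0$ for $p\equiv 3,5,7\pmod 8$ (the support of $\mathcal{A}$ lies in $1\bmod 8$) to get $\mathcal{A}(pn)\equiv\mathcal{A}(n/p)\pmod 2$, and then iterate the two consequences exactly as in the paper's Lemma~\ref{lemma2.1.2} and proof of Theorem~\ref{Thmj6(24n+3).2}. The only difference is cosmetic — you carry out the iteration directly on $\mathcal{A}$ rather than translating each step back into $\mathcal{F}_6$-language — and the eigenform property you flag as the main obstacle is settled in the paper by citing Martin's classification of multiplicative eta-quotients.
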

\noindent In particular, if we consider  $k=0$ and $j\not\equiv 0\pmod{5}$, then for all $n\geq 0,$ we have the following congruence:
\begin{align*}
\mathcal{J}_{6}\left(600n + 120j + 75\right) \equiv 0 \pmod{2}.
\end{align*} 
To prove Theorem \ref{Thmj6(24n+3).2}, we need the following lemma where we find some arithmetic properties of the Fourier coefficients of $\eta(8\tau)\eta(16\tau)$.
\begin{lemma}\label{lemma2.1.2}
	Suppose $\eta(8\tau)\eta(16\tau)$ has a Fourier series expansion $\displaystyle\sum_{n=1}^{\infty} \mathcal{A}(n)q^n$ and $p$ be a prime number such that $p \equiv 3,5,7 \pmod {8}$.  Then, 
	\begin{align*}	
	\mathcal{A}(p^2n + pr) &= 0  \;\;\; \text{if} \;\;\;  0<r<p,\\
	\intertext{and}
\mathcal{A}(p^2n) + \left(\frac{-2}{p}\right) \mathcal{A}(n)&= 0.
	\end{align*}	
\end{lemma}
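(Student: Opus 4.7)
The plan is to extract both identities from a single Hecke eigenvalue computation applied to $f(\tau) := \eta(8\tau)\eta(16\tau)$, which Lemma~\ref{f624n+4=tau} places in $S_1\left(\Gamma_0(128),\chi_{-2}\right)$. The strategy is standard for weight-one CM-type cusp forms: first verify that $f$ is a normalized Hecke eigenform, then read off $\lambda(p)$ from the support of its $q$-expansion, and finally unpack the recurrence \eqref{hecke1}.

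The first step is the support constraint. Since
\[
\eta(8\tau)\eta(16\tau) = q\prod_{n\geq 1}(1-q^{8n})(1-q^{16n}),
\]
every exponent of $q$ appearing in the right-hand side is of the form $1+8k$ for some $k\geq 0$. Consequently $\mathcal{A}(n)=0$ whenever $n\not\equiv 1\pmod 8$; in particular $\mathcal{A}(p)=0$ for every prime $p\equiv 3,5,7\pmod 8$.

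The second step is to invoke that $f$ is a normalized Hecke eigenform in $S_1\left(\Gamma_0(128),\chi_{-2}\right)$; this can be verified either by computing that the ambient space is one-dimensional via the dimension formula for weight-one cusp forms with quadratic Nebentypus, or by identifying $f$ with the theta series attached to a dihedral Hecke character of $\mathbb{Q}(\sqrt{-2})$. Writing $f|T_p=\lambda(p)f$ and comparing the coefficient of $q^{1}$ on both sides yields $\lambda(p)=\mathcal{A}(p)/\mathcal{A}(1)=\mathcal{A}(p)$, which vanishes for $p\equiv 3,5,7\pmod 8$ by the support step. Hence $f|T_p=0$, and applying \eqref{hecke1} with $\ell=1$ gives
\[
\mathcal{A}(pn)+\chi_{-2}(p)\,\mathcal{A}\!\left(\tfrac{n}{p}\right)=0 \qquad \text{for every } n\geq 0.
\]
Specializing $n=pn'+r$ with $0<r<p$ forces $p\nmid n$, collapses the second term, and gives $\mathcal{A}(p^{2}n'+pr)=0$, which is the first identity. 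Specializing instead to $n=pm$ produces $\mathcal{A}(p^{2}m)+\chi_{-2}(p)\,\mathcal{A}(m)=0$, which (after relabeling) is the second identity.

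I expect the only substantive obstacle to be the verification that $f$ is genuinely a Hecke eigenform, i.e., the one-dimensionality of $S_1\left(\Gamma_0(128),\chi_{-2}\right)$ or the CM identification. Once this ingredient is in place, the rest of the argument is purely formal manipulation of the Hecke recurrence and the support of the $q$-expansion.
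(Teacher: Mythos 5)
Your proposal is correct and follows essentially the same route as the paper: establish the support condition $\mathcal{A}(n)=0$ for $n\not\equiv 1\pmod 8$, use the Hecke eigenform property to get $\mathcal{A}(pn)+\left(\frac{-2}{p}\right)\mathcal{A}\left(\frac{n}{p}\right)=\lambda(p)\mathcal{A}(n)$, deduce $\lambda(p)=\mathcal{A}(p)=0$ by looking at $n=1$, and then specialize $n$ according to whether $p\mid n$. The only difference is how the eigenform property is justified: the paper simply cites Martin's classification of multiplicative eta-quotients, whereas you propose verifying it via a dimension count for $S_1\left(\Gamma_0(128),\chi_{-2}\right)$ or the CM/theta-series identification over $\mathbb{Q}(\sqrt{-2})$ --- both are acceptable substitutes for that single ingredient.
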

\begin{proof}
	We have $\eta(8\tau)\eta(16\tau) = q - q^9 - 2 q^{17} + q^{25} + 2 q^{41} + q^{49} - 2 q^{73} +  \dots=\displaystyle\sum_{n=1}^{\infty} \mathcal{A}(n)q^n$. It is easy to observe that  $\mathcal{A}(n) = 0$ if $n\not\equiv 1\pmod{8}$.  From \cite{martin} we know that  $\eta(8\tau)\eta(16\tau)$ is a Hecke eigenform. Using \eqref{hecke1} and \eqref{hecke3}, we get
		\begin{align*}
	\eta(8\tau)\eta(16\tau)|T_p = \sum_{n=1}^{\infty} \left(\mathcal{A}(pn) + \left(\frac{-2}{p}\right) \mathcal{A}\left(\frac{n}{p}\right) \right)q^n = \lambda(p) \sum_{n=1}^{\infty} \mathcal{A}(n)q^n.
	\end{align*}
	Equating the coefficients on the both sides, we have the following
	\begin{align}\label{1.1.1}
	\mathcal{A}(pn) + \left(\frac{-2}{p}\right) \mathcal{A}\left(\frac{n}{p}\right) = \lambda(p)\mathcal{A}(n).
	\end{align}
Since we consider the prime numbers $p \equiv 3,5,7 \pmod{8}$, putting $n=1$ in \eqref{1.1.1},  we obtain $\mathcal{A}(p) = 0 = \lambda(p)$. Therefore, from \eqref{1.1.1}, we have 
	\begin{align}\label{new-1.1}
	\mathcal{A}(pn) + \left(\frac{-2}{p}\right)\mathcal{A}\left(\frac{n}{p}\right)= 0
	\end{align}
 for all prime $p \equiv 3,5,7 \pmod {8}.$  
 Now, either $p\nmid n$ or $p\mid n$. We conclude the lemma from \eqref{new-1.1} by replacing 
 
 	\begin{align*}	
	n & \rightarrow pn+r \;\;\; \text{if} \;\;\;   0<r<p\\
	\intertext{or}
	n & \rightarrow  pn. 
	\end{align*}

	\end{proof}
\begin{proof}[Proof of Theorem \ref{Thmj6(24n+3).2}] 
If $p\nmid n$, substituting $n$ by $8n-pr+1$ in Lemma \ref{lemma2.1.2}  and using the congruence relation  \eqref{pmodf624n+4.23}, we have
	\begin{align*}
	\mathcal{F}_{6}\left(24p^2n + 3(p^2+pr-p^3r)+1\right) \equiv 0\pmod {2}.
	\end{align*}	
Here we consider the  prime  $p \equiv 3, 5, 7\pmod{8}$ and $\gcd \left(\frac{p^2-1}{8}, p\right) = 1$. Hence if $r$ runs over a residue system excluding the multiple of $p$, so does $\frac{1-p^2}{8}r$.
For $j\not\equiv0 \pmod {p}$, we can rewrite the above equation as
	\begin{align}\label{1.6}
	\mathcal{F}_{6}\left(24p^2n + 3p^2 + 24pj+1\right) \equiv 0\pmod {2}.
	\end{align}
Similarly, substituting $n$ by $8n+1$ in Lemma \ref{lemma2.1.2}  and using the congruence relation \eqref{pmodf624n+4.23}, we obtain
	\begin{align*}
	\mathcal{F}_{6}\left(24p^2n + 3p^2+1\right)\equiv \mathcal{F}_{6}\left(24n + 4\right)\pmod {2}.
	\end{align*}
For $1\leq i\leq k$, consider any primes $p_i\equiv3,5,7\pmod8.$ Therefore,  
\begin{align*}
	&\mathcal{F}_{6}\big(24p_1^2p_2^2\dots p_{k}^2n + 3p_1^2p_2^2\dots p_{k}^2+1\big)\\
	&= \mathcal{F}_{6}\left(24p_1^2\left(p_2^2\dots p_{k}^2n + \frac{p_2^2\dots p_{k}^2-1}{8}\right)+ 3p_1^2+1\right)\\
	&\equiv \mathcal{F}_{6}\left(24p_2^2\dots p_{k}^2n + 3p_2^2\dots p_{k}^2+1\right)\pmod 2.
	\end{align*}
By using the above recursive relation for $(k-1)$~times, we have
\begin{align}\label{1.7}
	&\mathcal{F}_{6}\big(24p_1^2p_2^2\dots p_{k}^2n + 3p_1^2p_2^2\dots p_{k}^2+1\big)\equiv \mathcal{F}_{6}\left(24n+4\right)\pmod 2.
	\end{align}
Let us consider a prime $p_{k+1}\equiv 3, 5, 7\pmod 8$ and $j\not\equiv 0\pmod{p_{k+1}}$. Then, \eqref{1.6} and \eqref{1.7} yield
\begin{align*}
	\mathcal{F}_{6}\left(24p_1^2p_2^2\dots p_{k}^2p_{k+1}^2n+24p_1^2p_2^2\dots p_{k}^2p_{k+1}j+3p_1^2p_2^2\dots p_{k}^2p_{k+1}^2+1\right)\equiv 0\pmod 2.
	\end{align*}
Hence we readily obtain Theorem \ref{Thmj6(24n+3).2} from the above congruences and \eqref{j6n=f6(n+1)}.
\end{proof}

\begin{theorem}\label{Thmj6(24n+3).3} 	Let $k$ be a positive integer and $i \in \{3, 5, 7\}$. Suppose $p$ is a prime number such that $p \equiv i \pmod {8}$. 
Let $ \delta $ be a non-negative integer such that $p$ divides $8\delta  + i$, then $\mathcal{J}_{6}\left(N_1 \right)$ and  $\mathcal{J}_{6}\left(N_2\right)$ have the same parity, where $N_1=3p\left(8p^{k}n+ 8\delta+i\right)$ and $N_2=24\left(p^{k-1}n+ \frac{8\delta +i-p}{8p}\right)+3.$
 \end{theorem}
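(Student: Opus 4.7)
My plan is to mirror the strategy of the proof of Theorem~\ref{Thmj6(24n+3).2} and reduce both parities to those of the Fourier coefficients $\mathcal{A}(n)$ of the Hecke eigenform $\eta(8\tau)\eta(16\tau)$. Combining \eqref{j6n=f6(n+1)} with \eqref{pmodf624n+4.23} gives $\mathcal{J}_6(24m+3) \equiv \mathcal{A}(8m+1) \pmod 2$, so the approach is to write both $N_1+1$ and $N_2+1$ in the form $24m+4$ and then use the eigenform relation $\mathcal{A}(p^2 r) \equiv \mathcal{A}(r) \pmod 2$ coming from Lemma~\ref{lemma2.1.2} (the Legendre symbol $\left(\frac{-2}{p}\right)$ is $\pm 1$ and hence $\equiv 1 \pmod 2$) to match the two sides.

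For $N_1$, the key arithmetic observation is that $i \in \{3,5,7\}$ together with $p \equiv i \pmod 8$ forces $pi \equiv i^2 \equiv 1 \pmod 8$, so $(pi-1)/8$ is an integer. A direct rearrangement then yields $N_1 + 1 = 24 m_1 + 4$ with $m_1 = p^{k+1}n + p\delta + (pi-1)/8$, and simplifying further gives $8 m_1 + 1 = p(8 p^k n + 8\delta + i)$. This is the point where the hypothesis $p \mid 8\delta + i$ enters: writing $8\delta + i = pM$ promotes the lone factor of $p$ to a $p^2$, so $8m_1 + 1 = p^2 (8 p^{k-1} n + M)$. Applying Lemma~\ref{lemma2.1.2} then gives
\[
\mathcal{J}_6(N_1) \equiv \mathcal{A}(8 p^{k-1} n + M) \pmod 2.
\]

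For $N_2$ the rewriting is immediate: $N_2 + 1 = 24 m_2 + 4$ with $m_2 = p^{k-1}n + (8\delta + i - p)/(8p)$, an integer because $p \mid 8\delta + i$ and $8 \mid i - p$ (the latter following from $i \equiv p \pmod 8$). A one-line computation gives $8 m_2 + 1 = 8 p^{k-1} n + M$, so \eqref{pmodf624n+4.23} yields $\mathcal{J}_6(N_2) \equiv \mathcal{A}(8 p^{k-1} n + M) \pmod 2$, which matches the $N_1$ side and establishes the claim.

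The main obstacle is a careful piece of bookkeeping rather than any new analytic input: one must verify that $m_1$ and $m_2$ are genuine integers and, crucially, that the single factor of $p$ extracted from the $N_1$ rewriting combines with the hypothesis $p \mid 8\delta + i$ to deliver the square $p^2$ demanded by Lemma~\ref{lemma2.1.2}. This is exactly why the divisibility condition $p \mid 8\delta + i$ is built into the statement, and it is the step where miscounting a single power of $p$ would collapse the whole argument.
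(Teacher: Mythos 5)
Your proposal is correct and follows essentially the same route as the paper: both reduce $\mathcal{J}_6$ at $N_1$ and $N_2$ via \eqref{j6n=f6(n+1)} and \eqref{pmodf624n+4.23} to coefficients $\mathcal{A}(8m+1)$ of $\eta(8\tau)\eta(16\tau)$ and then invoke the Hecke eigenform relation of Lemma~\ref{lemma2.1.2} (the paper applies it in the form $\mathcal{A}(pn)\equiv\mathcal{A}(n/p)\pmod 2$ at $n=8p^kn+8\delta+i$, which is the same identity you use in its $\mathcal{A}(p^2r)\equiv\mathcal{A}(r)$ form after factoring out $p$ from $8\delta+i$). The integrality checks on $(pi-1)/8$ and $(8\delta+i-p)/(8p)$ match the paper's, so the argument is complete.
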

\begin{proof}
Let $p$ be a prime number with $p\equiv 3,5,7 \pmod 8$. Then for all positive integer $n$, we have the following from  \eqref{new-1.1}. 
		$$\mathcal{A}\left(pn\right) \equiv \mathcal{A}\left(\frac{n}{p}\right) \pmod 2.$$	
Let $i$ be a fixed number such that  $ i \in \{3,5,7\}$. Suppose  $\delta$ is a positive number such that  the prime $p\equiv i \pmod {8}$ divides $8\delta  + i$. Then, substituting $n$ by $8p^kn+8\delta+i$ in the above relation and using \eqref{pmodf624n+4.23}, modulo $2$, we obtain. 
\begin{align}\label{thm2.2.4.1}
\mathcal{F}_{6}\left(24\left( p^{k+1}n+ p\delta +\frac{pi-1}{8}\right)+4\right)\equiv \mathcal{F}_{6}\left(24\left(p^{k-1}n+ \frac{8\delta +i-p}{8p}\right)+4\right).
\end{align}	
In general the above congruences occurs because $\frac{pi-1}{8}$ and $\frac{8\delta +i-p}{8p}$ are integers. The result directly follows from \eqref{j6n=f6(n+1)} and \eqref{thm2.2.4.1}.
\end{proof}
\begin{corollary}\label{coroj6(24n+3)}	Let $k$ be a positive integer and $p$ be a prime number such that $p  \equiv 3, 5, 7 \pmod {8}$. Then
$\mathcal{J}_{6}\left( 24n+3\right)$ and $\mathcal{J}_{6}\left(3p^{2k}\left( 8n+1\right)\right)$    have the same parity.
\end{corollary}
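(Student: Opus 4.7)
The plan is to deduce this corollary from Theorem~\ref{Thmj6(24n+3).3} by applying that theorem with its parameter $k$ replaced by $2k-1$ (where $k$ is the integer in the corollary) and with a carefully chosen $\delta$, which produces a descent relation that can be iterated. Specifically, I would set $\delta := (p^{2k-1} - i)/8$. Since $p$ is odd, the elementary fact $p^2 \equiv 1 \pmod 8$ gives $p^{2k-1} \equiv p \equiv i \pmod 8$, so $\delta$ is a non-negative integer; moreover $8\delta + i = p^{2k-1}$ is divisible by $p$ (as $2k-1 \geq 1$). Hence the hypotheses of Theorem~\ref{Thmj6(24n+3).3} are satisfied for this $\delta$.

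Plugging $\delta$ and $2k-1$ (in place of the theorem's $k$) into the expressions for $N_1$ and $N_2$ in Theorem~\ref{Thmj6(24n+3).3}, a direct computation gives
\begin{align*}
N_1 &= 3p\bigl(8 p^{2k-1} n + p^{2k-1}\bigr) = 3 p^{2k}(8n+1), \\
N_2 &= 24\Bigl(p^{2k-2} n + \tfrac{p^{2k-2} - 1}{8}\Bigr) + 3 = 3p^{2k-2}(8n+1),
\end{align*}
where I again invoke $p^{2k-2} \equiv 1 \pmod 8$ to see that $(p^{2k-2}-1)/8$ is a non-negative integer. Theorem~\ref{Thmj6(24n+3).3} therefore yields $\mathcal{J}_6(3p^{2k}(8n+1)) \equiv \mathcal{J}_6(3 p^{2k-2}(8n+1)) \pmod 2$.

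I would then conclude by iterating this descent. Each application lowers the exponent of $p$ by $2$, so after $k$ successive applications (equivalently, a short induction on $k \geq 1$) the parity of $\mathcal{J}_6(3 p^{2k}(8n+1))$ is linked to that of $\mathcal{J}_6(3 p^0(8n+1)) = \mathcal{J}_6(24n+3)$, which is exactly the assertion. The only genuinely delicate point --- essentially the whole content of the argument --- is the simultaneous arrangement of $\delta$ so that $N_1 = 3p^{2k}(8n+1)$ matches the target while $\delta$ is integral and $p \mid 8\delta + i$; the observation $p^2 \equiv 1 \pmod 8$ for odd primes is precisely what makes this choice possible.
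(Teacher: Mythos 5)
Your proposal is correct and follows essentially the same route as the paper: the paper likewise specializes the descent relation underlying Theorem~\ref{Thmj6(24n+3).3} by taking $8\delta+i=p^{2k-1}$ (equivalently, substituting $n\mapsto p^{k-1}n$ in its key congruence, which matches your choice of parameter $2k-1$), obtains $\mathcal{F}_{6}\left(3p^{2k}(8n+1)+1\right)\equiv\mathcal{F}_{6}\left(3p^{2k-2}(8n+1)+1\right)\pmod 2$, and iterates down to $\mathcal{F}_{6}(24n+4)$. Your verification that $\delta=(p^{2k-1}-i)/8$ is a non-negative integer with $p\mid 8\delta+i$ supplies exactly the justification the paper leaves implicit.
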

\begin{proof}Let $p$ be a prime  such that $p\equiv i\pmod {8}$ where $ i \in \{3, 5, 7\}$. Depending on the residue of $p$ modulo 8, we consider a non-negative integer $\delta$ with $8\delta+i=p^{2k-1}.$ First, substituting $n$ by $p^{k-1}n$ in \eqref{thm2.2.4.1}, we get
\begin{align*}
\mathcal{F}_{6}\left(24\left( p^{2k}n+ p\delta +\frac{pi-1}{8}\right)+4\right)\equiv \mathcal{F}_{6}\left(24\left(p^{2(k-1)}n+ \frac{8\delta +i-p}{8p}\right)+4\right).
\end{align*}	
Replacing $8\delta+i$ by $p^{2k-1}$, we have
\begin{align*}
\mathcal{F}_{6}\left(3p^{2k}\left( 8n+1\right)+1\right)\equiv \mathcal{F}_{6}\left(3p^{2(k-1)}\left( 8n+1\right)+1\right)\pmod 2.
\end{align*}	
By using the above recursive relation for $(k-1)$ times, we obtain \begin{align*}
\mathcal{F}_{6}\left(3p^{2k}\left( 8n+1\right)+1\right)\equiv \mathcal{F}_{6}\left(3\left( 8n+1\right)+1\right)\pmod 2.
\end{align*}
Corollary \ref{coroj6(24n+3)} directly follows from the above equation and \eqref{j6n=f6(n+1)}. 
\end{proof}
Next we consider the Hauptmodul $j_6^{*}(\tau)$ given as follows and get the result for its $n$-th Fourier coefficient $\mathcal{J}^*_{6}(n)$.
\begin{align*}
j_6^{*}(\tau):&=\left(\frac{\eta(\tau)\eta(3\tau)}{\eta(2\tau)\eta(6\tau)}\right)^6+6+2^6\left(\frac{\eta(2\tau)\eta(6\tau)}{\eta(\tau)\eta(3\tau)}\right)^6\\ 
&=\frac{1}{q}+79 q + 352 q^2 + 1431 q^3 + 4160 q^4 + 13015 q^5 + 31968 q^6+\cdots\\
&=\frac{1}{q} + \sum_{n=1}^{\infty} \mathcal{J}^*_{6}(n)q^n.	
\end{align*}
Let us define \begin{align}
\label{f*6n.2.3.4}\sum_{n=0}^{\infty}\mathcal{F}^*_{6}(n)q^n&= \left(\frac{(q;q)_{\infty}(q^3;q^3)_{\infty}}{(q^2;q^2)_{\infty}(q^{6};q^{6})_{\infty}}\right)^6\\
\notag &=1 - 6 q + 15 q^2 - 32 q^3 + 87 q^4 - 192 q^5 + 343 q^6 - 672 q^7+\cdots.
\end{align}
Therefore, for any positive integer $n$ we have 
\begin{align}\label{j*6n=f*6(n+1)}
  \mathcal{J}^*_{6}(n)\equiv \mathcal{F}^*_{6}(n+1) \pmod 2.
\end{align}
To prove Theorems \ref{Thmj6*(24n+3)}--\ref{Thmj6=j*6}, we need the following Lemma.
\begin{lemma}\label{Thmj6*(24n+3).lemma}
For any positive integer $n$, we have 
\begin{align*}
  \sum_{n=0}^{\infty}\mathcal{F}^*_{6}(4n)q^n&\equiv \frac{(q;q)_{\infty}^3}{(q^3;q^3)_{\infty}^3}\pmod 2\\
 \intertext{and}
   \sum_{n=0}^{\infty}\mathcal{F}^*_{6}(8n+2)q^n &\equiv(q;q)_{\infty}^3 \pmod 2.
\end{align*}
\end{lemma}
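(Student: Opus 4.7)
The strategy is to successively extract the Fourier coefficients at the prescribed arithmetic progressions from the eta-quotient generating function in \eqref{f*6n.2.3.4}, using the mod-$2$ reduction $(q^{2k};q^{2k})_\infty \equiv (q^k;q^k)_\infty^2$ together with Yao's 2-dissection identity \eqref{f1/f3^3}.

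Write $f_k := (q^k;q^k)_\infty$ for brevity. Raising the eta-quotient to the sixth power and applying the binomial identity in numerator and denominator produces
\begin{align*}
\sum_{n=0}^{\infty}\mathcal{F}^*_6(n)\,q^n \;\equiv\; \frac{f_2^3\,f_6^3}{f_4^3\,f_{12}^3} \pmod 2,
\end{align*}
which is a power series in $q^2$ alone. Hence $\mathcal{F}^*_6(2n+1)\equiv 0\pmod 2$, and substituting $q^2\mapsto q$ and cleaning up via $f_k^3\equiv f_k f_{2k}\pmod 2$ yields
\begin{align*}
\sum_{n=0}^{\infty}\mathcal{F}^*_6(2n)\,q^n \;\equiv\; \frac{f_1\,f_3}{f_2^2\,f_6^2} \pmod 2.
\end{align*}

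Next, I invoke Yao's identity \eqref{f1/f3^3} in the form $f_1 = f_3^3\bigl[A(q^2) - q\,B(q^2)\bigr]$, where $A(q^2)$ and $B(q^2)$ are the two eta-quotients appearing there, and substitute for $f_1$. The resulting factor $f_3^4/(f_2^2 f_6^2)$ collapses to $1/f_2^2$ modulo $2$ via $f_3^4 \equiv f_6^2$, producing a clean 2-dissection
\begin{align*}
\sum_{n=0}^{\infty}\mathcal{F}^*_6(2n)\,q^n \;\equiv\; \frac{A(q^2)}{f_2^2} \;-\; q\cdot\frac{B(q^2)}{f_2^2} \pmod 2.
\end{align*}
Extracting the even part in $q$, substituting $q^2\mapsto q$, and simplifying with $f_2^2\equiv f_1^4$ and $f_6^2\equiv f_3^4\pmod 2$ produces the first assertion $\sum_n\mathcal{F}^*_6(4n)\,q^n \equiv f_1^3/f_3^3\pmod 2$. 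The analogous extraction of the odd part (after dividing by $q$ and substituting $q^2\mapsto q$) yields $\sum_n\mathcal{F}^*_6(4n+2)\,q^n \equiv f_3^3/f_1^3\pmod 2$.

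For the second assertion I must isolate the even part of $f_3^3/f_1^3$. The key manoeuvre is to rewrite this quotient modulo $2$ as $f_1\,f_3^3/f_4$ using $f_1^4 \equiv f_4\pmod 2$, and then to derive a 2-dissection of $f_1\,f_3^3$ by multiplying Yao's identity through by $f_3^6$; after the mod-$2$ reductions $f_3^6\equiv f_6^3$, $f_6^4\equiv f_{12}^2$, and $f_6^6\equiv f_{12}^3$ the outcome is
\begin{align*}
f_1\,f_3^3 \;\equiv\; f_2\,f_4^2 \;-\; q\cdot\frac{f_2^3\,f_{12}^3}{f_4^2} \pmod 2.
\end{align*}
Dividing by $f_4$ and taking the even part (both $f_2 f_4$ and $f_2^3 f_{12}^3/f_4^3$ are series in $q^2$) leaves $f_2 f_4\pmod 2$; the substitution $q^2\mapsto q$ followed by $f_1 f_2 \equiv f_1^3\pmod 2$ then gives $\sum_n\mathcal{F}^*_6(8n+2)\,q^n \equiv (q;q)_\infty^3 \pmod 2$, as claimed. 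The main technical obstacle is this last 2-dissection: Yao's identity directly dissects only $f_1/f_3^3$, so extracting the even part of the ``reciprocal'' $f_3^3/f_1^3$ hinges on the mod-$2$ conversion $f_1^{-3}\equiv f_1/f_4$ that brings Yao back into play.
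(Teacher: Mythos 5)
Your proof is correct: every mod-$2$ reduction and extraction checks out, and you arrive at the same intermediate generating functions as the paper, namely $(q;q)_{\infty}^3/(q^3;q^3)_{\infty}^3$ for the $4n$ subsequence, $(q^3;q^3)_{\infty}^3/(q;q)_{\infty}^3$ for $4n+2$, and $(q;q)_{\infty}^3$ for $8n+2$. The overall strategy --- iterated $2$-dissection combined with $(q^{2k};q^{2k})_{\infty}\equiv(q^{k};q^{k})_{\infty}^2\pmod 2$ --- matches the paper's, but your choice of dissection identities is genuinely different. The paper writes $\sum_{n}\mathcal{F}^*_{6}(2n)q^n\equiv\frac{1}{(q;q)_{\infty}^3(q^3;q^3)_{\infty}^3}\equiv\frac{1}{(q^2;q^2)_{\infty}(q^6;q^6)_{\infty}}\cdot\frac{1}{(q;q)_{\infty}(q^3;q^3)_{\infty}}$ and applies Xia--Yao's $2$-dissection \eqref{1/f1f3} of $1/\bigl((q;q)_{\infty}(q^3;q^3)_{\infty}\bigr)$; for the $8n+2$ step it then invokes Yao's dissection \eqref{f3^3/f1} of $(q^3;q^3)_{\infty}^3/(q;q)_{\infty}$. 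You instead funnel both steps through the single identity \eqref{f1/f3^3}: in the first step by solving it for $(q;q)_{\infty}$ and absorbing the resulting $(q^3;q^3)_{\infty}^4$ into $(q^6;q^6)_{\infty}^2$, and in the second by multiplying it by $(q^3;q^3)_{\infty}^6$ to manufacture a mod-$2$ dissection of $(q;q)_{\infty}(q^3;q^3)_{\infty}^3$, after converting $(q;q)_{\infty}^{-3}\equiv(q;q)_{\infty}/(q^4;q^4)_{\infty}$. What your route buys is economy of inputs: only one external $2$-dissection is needed, and it is the same one already used in the proof of Lemma~\ref{f624n+4=tau}, so no further imported formulas are required for the $\mathcal{F}^*_6$ lemma. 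The paper's route buys slightly shorter computations at each stage, since the needed dissections are quoted ready-made rather than derived modulo $2$.
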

\begin{proof}
Using the binomial theorem in \eqref{f*6n.2.3.4}, we have
\begin{align}\label{f*6n}
\sum_{n=0}^{\infty}\mathcal{F}^*_{6}(n)q^n&= \left(\frac{(q;q)_{\infty}(q^3;q^3)_{\infty}}{(q^2;q^2)_{\infty}(q^{6};q^{6})_{\infty}}\right)^6\equiv \left(\frac{1}{(q^2;q^2)_{\infty}(q^6;q^6)_{\infty}}\right)^3\pmod 2.
\end{align}
Xia and Yao~\cite[p.380]{Xia2013}  derived the following 2-dissection formula 
\begin{align}\label{1/f1f3}
\frac{1}{(q;q)_{\infty}(q^{3};q^{3})_{\infty}}= & \frac{(q^8;q^8)_{\infty}^2(q^{12};q^{12})_{\infty}^5}{(q^2;q^2)_{\infty}^2(q^4;q^4)_{\infty}(q^6;q^6)_{\infty}^4(q^{24};q^{24})_{\infty}^2}\\
\notag&+q\frac{(q^4;q^4)_{\infty}^5(q^{24};q^{24})_{\infty}^2}{(q^2;q^2)_{\infty}^4(q^6;q^6)_{\infty}^2(q^8;q^8)_{\infty}^2(q^{12};q^{12})_{\infty}}.
\end{align}
Extracting the terms from \eqref{f*6n} containing the even power of $q$ and then using the 2-dissection formula \eqref{1/f1f3} we have
\begin{align}\label{f*62n}
\notag \sum_{n=0}^{\infty}\mathcal{F}^*_{6}(2n)q^n&\equiv \frac{1}{(q^2;q^2)_{\infty}(q^6;q^6)_{\infty}}\left(\frac{(q^8;q^8)_{\infty}^2(q^{12};q^{12})_{\infty}^5}{(q^2;q^2)_{\infty}^2(q^4;q^4)_{\infty}(q^6;q^6)_{\infty}^4(q^{24};q^{24})_{\infty}^2}\right.\\
\notag&~~~\left. + q\frac{(q^4;q^4)_{\infty}^5(q^{24};q^{24})_{\infty}^2}{(q^2;q^2)_{\infty}^4(q^6;q^6)_{\infty}^2(q^8;q^8)_{\infty}^2(q^{12};q^{12})_{\infty}}\right) \pmod 2\\
& = \frac{(q^8;q^8)_{\infty}^2(q^{12};q^{12})_{\infty}^5}{(q^2;q^2)_{\infty}^3(q^4;q^4)_{\infty}(q^6;q^6)_{\infty}^5(q^{24};q^{24})_{\infty}^2} + q\frac{(q^4;q^4)_{\infty}^5(q^{24};q^{24})_{\infty}^2}{(q^2;q^2)_{\infty}^5(q^6;q^6)_{\infty}^3(q^8;q^8)_{\infty}^2(q^{12};q^{12})_{\infty}}.
\end{align}
Now extracting the terms containing the even power of $q$ from \eqref{f*62n} and using the binomial theorem, we obtain
\begin{align*}
\sum_{n=0}^{\infty}\mathcal{F}^*_{6}(4n)q^n&\equiv\frac{(q^4;q^4)_{\infty}^2(q^{6};q^{6})_{\infty}^5}{(q;q)_{\infty}^3(q^2;q^2)_{\infty}(q^3;q^3)_{\infty}^5(q^{12};q^{12})_{\infty}^2} \equiv\frac{(q;q)_{\infty}^3}{(q^3;q^3)_{\infty}^3} \pmod 2.
\end{align*}
Which complete the proof of the first part of Lemma \ref{Thmj6*(24n+3).lemma}.\\

\noindent To prove the second part of Lemma \ref{Thmj6*(24n+3).lemma}, we use the  following 2-dissections formula (see \cite[Lemma 2.5]{Yao2013JNT}):
\begin{align}\label{f3^3/f1}
\frac{(q^{3};q^{3})_{\infty}^3}{(q;q)_{\infty}}&= \frac{(q^4;q^4)_{\infty}^3(q^{6};q^{6})_{\infty}^2}{(q^2;q^2)_{\infty}^2(q^{12};q^{12})_{\infty}} + q \frac{(q^{12};q^{12})_{\infty}^3}{(q^4;q^4)_{\infty}}.
\end{align}
Collecting the terms containing the odd power of $q$ from \eqref{f*62n} and using the 2-dissections formula \eqref{f3^3/f1}, we obtain
\begin{align*}
\notag\sum_{n=0}^{\infty}\mathcal{F}^*_{6}(4n+2)q^n&\equiv \frac{(q^2;q^2)_{\infty}^5(q^{12};q^{12})_{\infty}^2}{(q;q)_{\infty}^5(q^3;q^3)_{\infty}^3(q^4;q^4)_{\infty}^2(q^{6};q^{6})_{\infty}}\\
\notag &\equiv \frac{1}{(q^2;q^2)_{\infty}} \frac{(q^3;q^3)_{\infty}^3}{(q;q)_{\infty}}\pmod 2\\
&= \frac{1}{(q^2;q^2)_{\infty}}\left(\frac{(q^4;q^4)_{\infty}^3(q^{6};q^{6})_{\infty}^2}{(q^2;q^2)_{\infty}^2(q^{12};q^{12})_{\infty}} + q \frac{(q^{12};q^{12})_{\infty}^3}{(q^4;q^4)_{\infty}}\right).
\end{align*}
Extracting the terms containing $q^{2n}$ from the above equation, we get
\begin{align}
\label{1121}
\sum_{n=0}^{\infty}\mathcal{F}^*_{6}(8n+2)q^n\equiv\frac{(q^2;q^2)_{\infty}^3(q^{3};q^{3})_{\infty}^2}{(q;q)_{\infty}^3(q^{6};q^{6})_{\infty}} \pmod 2.
\end{align}
Next, using the binomial theorem in \eqref{1121}, we readily obtain the last part of Lemma \ref{Thmj6*(24n+3).lemma}.
\end{proof}

Observe that we establish two natural congruence relations between  the generating function of $\mathcal{F}_6(n)$ and $\mathcal{F}^*_6(n)$ modulo~$2$. From \eqref{f64n}, \eqref{f624n+4} and Lemma~\ref{Thmj6*(24n+3).lemma}, we have
\begin{align}
  \label{f64n=f*64n}  \sum_{n=0}^{\infty}\mathcal{F}^*_{6}(4n)q^n&\equiv \sum_{n=0}^{\infty}\mathcal{F}_{6}(4n)q^n\equiv \frac{(q;q)_{\infty}^3}{(q^3;q^3)_{\infty}^3}\pmod 2\\
 \intertext{and}
  \label{f*6(8n+2)=f6(24n+4)} \sum_{n=0}^{\infty}\mathcal{F}^*_{6}(8n+2)q^n &\equiv \sum_{n=0}^{\infty}\mathcal{F}_{6}(24n+4)q^n\equiv(q;q)_{\infty}^3 \pmod 2.
\end{align}
\begin{proof}[Proof of Theorems \ref{Thmj6*(24n+3)}-\ref{Thmj6=j*6}]
Using the relationships \eqref{f64n=f*64n} and \eqref{f*6(8n+2)=f6(24n+4)}, we readily obtain Theorem~\ref{Thmj6*(24n+3)}, Theorem \ref{Thmj*6(24n+3).2},  Theorem~\ref{Thmj*6(24n+3).3}, and  Corollary \ref{coroj*6(24n+3)} from Theorem \ref{Thmj6(24n+3)},  Theorem \ref{Thmj6(24n+3).2}, Theorem \ref{Thmj6(24n+3).3}, and Corollary \ref{coroj6(24n+3)}, respectively. 

Now it remains to prove Theorem~\ref{Thmj6=j*6}.
Extracting the terms containing the odd power of $q$ from \eqref{f6n.2},  \eqref{f62n.1}  and \eqref{f*6n}, we get
\begin{align*}
\mathcal{F}_{6}(2n+1)&\equiv 0 \pmod 2,\\
\mathcal{F}_{6}(4n+2)&\equiv 0 \pmod 2\\
\intertext{and}
\mathcal{F}^*_{6}(2n+1)&\equiv 0 \pmod 2,
\end{align*}
respectively. We readily obtain Theorem \ref{Thmj6=j*6} (a)-(b), by using  \eqref{j6n=f6(n+1)},  \eqref{j*6n=f*6(n+1)} and above congruences.\\

For $i\in \{1,2\}$, extracting the terms containing $q^{3n+i}$  from \eqref{f68n+4}, we have
\begin{align}\label{f64n=f*64n.2.2}
\mathcal{F}_{6}(8(3n+i)+4)&\equiv 0 \pmod 2.
\end{align}
Now it follows from \eqref{f64n=f*64n} that $\mathcal{F}_{6}(4n)$ and $\mathcal{F}^*_{6}(4n)$ have the same parity. Then using \eqref{j6n=f6(n+1)}, \eqref{j*6n=f*6(n+1)} and \eqref{f64n=f*64n.2.2}, we get Theorem~\ref{Thmj6=j*6} (c). Lastly, the parts (d)-(e)  of Theorem~\ref{Thmj6=j*6} directly follows from  \eqref{f64n=f*64n} and \eqref{f*6(8n+2)=f6(24n+4)}.
 \end{proof}
\section{ Arithmetic properties  of the Fourier coefficients of  \texorpdfstring{$j_{10}(\tau)$}{j10} and  \texorpdfstring{$j^{*}_{10}(\tau)$}{j*10}} \label{Sect:j10 and j10*}
In this section, we consider the hauptmoduln \texorpdfstring{$j_{10}(\tau)$}{j10} and  \texorpdfstring{$j^{*}_{10}(\tau)$}{j*10}. We study the distribution and divisibility of the Fourier coefficients of those hauptmoduln. 
\begin{theorem}\label{Thmj10(4n+1)} For any positive integer $n$, we have 
\begin{align*}
    \lim\limits_{X\to +\infty}\frac{\# \left\{n\leq X:\mathcal{J}_{10}(4n+1)\equiv 0 \pmod{2} \right\}}{X}=1.
\end{align*}
\end{theorem}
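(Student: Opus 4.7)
Following the template of Theorem~\ref{Thmj6(24n+3)}, I begin by defining the auxiliary series
$$\sum_{n=0}^{\infty}\mathcal{F}_{10}(n)q^n := \frac{(q^2;q^2)_\infty (q^5;q^5)_\infty^5}{(q;q)_\infty (q^{10};q^{10})_\infty^5}.$$
Matching $q$-powers in the eta-quotient expression for $j_{10}(\tau)$ gives $\mathcal{J}_{10}(n)=\mathcal{F}_{10}(n+1)$ for $n\geq 1$, and in particular $\mathcal{J}_{10}(4n+1)=\mathcal{F}_{10}(4n+2)$. Applying the binomial congruence $(q^{2k};q^{2k})_\infty\equiv (q^k;q^k)_\infty^2 \pmod 2$ simplifies the right-hand side modulo~$2$ to $\frac{(q;q)_\infty}{(q^5;q^5)_\infty^5}$, so it suffices to prove that $\mathcal{F}_{10}(4n+2)$ is even for almost every $n$.

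The crux is a lemma parallel to Lemma~\ref{f624n+4=tau}: establish an identity of the form
$$\sum_{n=0}^{\infty}\mathcal{F}_{10}(4n+2)\,q^{\lambda n+\mu}\equiv \prod_{\delta\mid N}\eta(\delta\tau)^{r_\delta}\pmod 2,$$
where the right-hand side is an integer-weight cusp form in $S_k(\Gamma_0(N),\chi)$ for some level $N$ and Nebentypus~$\chi$. To produce such an identity, I would apply two successive 2-dissections to $\frac{(q;q)_\infty}{(q^5;q^5)_\infty^5}$ modulo~$2$: first isolate the even-indexed part $\sum\mathcal{F}_{10}(2n)q^n$, then isolate the odd-indexed subseries of the result to obtain $\sum\mathcal{F}_{10}(4n+2)q^n$. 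Each dissection step combines a 2-dissection identity (analogous to \eqref{f1/f3^3} and \eqref{f3^3/f1}) with repeated applications of the binomial congruence to collapse the expression into a small infinite product. After a substitution $q\mapsto q^\lambda$ and multiplication by a suitable power of $q$, the resulting series becomes an eta-quotient in $\tau$, whose modularity and cuspidality are then verified using Theorems~\ref{thm_ono1} and~\ref{thm_ono1.1}.

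With the cusp-form identification in hand, Serre's estimate (Theorem~\ref{Serre}) applied with the ideal $(2)$ yields $\#\{n\leq X:\mathcal{A}(n)\not\equiv 0\pmod 2\}=\mathcal{O}\!\left(X/(\log X)^{\alpha}\right)$ for some $\alpha>0$, where $\mathcal{A}(n)$ denotes the Fourier coefficients of the cusp form. A bijection between arithmetic progressions, mirroring the passage from \eqref{iiits.12} to \eqref{iiits.11} in the proof of Theorem~\ref{Thmj6(24n+3)}, converts this bound into a density-one statement for $\mathcal{F}_{10}(4n+2)$, and hence for $\mathcal{J}_{10}(4n+1)$. The principal obstacle is locating the correct 2-dissection for $\frac{(q;q)_\infty}{(q^5;q^5)_\infty^5}$ modulo~$2$: no ready-made analog of \eqref{f1/f3^3} for the pair $((q;q)_\infty,(q^5;q^5)_\infty)$ appears in the references, so one must either derive it from scratch via Ramanujan-style theta-function manipulations, or first exploit mod-$2$ simplifications such as $(q^5;q^5)_\infty^5\equiv(q^5;q^5)_\infty(q^{20};q^{20})_\infty\pmod 2$ to bring the generating function into a more symmetric form amenable to classical 2-dissections, after which the right eta-quotient on an appropriate level can be pinned down.
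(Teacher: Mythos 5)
Your strategy is exactly the paper's: reduce $\mathcal{J}_{10}(4n+1)$ to $\mathcal{F}_{10}(4n+2)$, collapse the generating function modulo $2$ via the binomial congruence, perform two successive $2$-dissections to isolate $\sum\mathcal{F}_{10}(4n+2)q^n$, recognize the result as an eta-quotient cusp form, and finish with Serre's density theorem plus the arithmetic-progression bijection. All of the framing steps you give are correct, including the simplification $(q^5;q^5)_\infty^5\equiv(q^5;q^5)_\infty(q^{20};q^{20})_\infty\pmod 2$.

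The gap is that the decisive computation is announced rather than carried out: you never actually produce the identity $\sum_{n\ge 0}\mathcal{F}_{10}(4n+2)q^{4n+1}\equiv\eta(4\tau)\eta(20\tau)\pmod 2$, and you flag the missing $2$-dissection as the ``principal obstacle'' without resolving it. It does not need to be derived from scratch: Xia and Yao's $2$-dissection of $(q^5;q^5)_\infty/(q;q)_\infty$, after the substitution $q\mapsto -q$, gives the required dissection of $(q;q)_\infty/(q^5;q^5)_\infty$. Feeding that into $\frac{(q;q)_\infty}{(q^5;q^5)_\infty(q^{20};q^{20})_\infty}$, extracting the even part yields $\sum\mathcal{F}_{10}(2n)q^n\equiv\frac{(q;q)_\infty(q^2;q^2)_\infty}{(q^5;q^5)_\infty(q^{10};q^{10})_\infty}\pmod 2$, and a second application followed by extraction of the odd part gives $\sum\mathcal{F}_{10}(4n+2)q^n\equiv(q;q)_\infty(q^5;q^5)_\infty\pmod 2$, i.e.\ (after $q\mapsto q^4$ and multiplying by $q$) the form $\eta(4\tau)\eta(20\tau)\in S_1\left(\Gamma_0(80),\left(\frac{-20}{\bullet}\right)\right)$. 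With that lemma in hand your Serre argument closes the proof exactly as in Theorem~\ref{Thmj6(24n+3)}; without it, the proof is a correct plan but not yet a proof.
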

\begin{proof}
We have
\begin{align*}
j_{10}(\tau):&=\frac{\eta(2\tau)\eta(5\tau)^5}{\eta(\tau)\eta(10\tau)^5}-1\\
&=\frac{1}{q}+q+2q^2+2q^3-2q^4-q^5-4q^7+\cdots\\
&=\frac{1}{q} + \sum_{n=1}^{\infty}\mathcal{J}_{10}(n)q^n.	
\end{align*}
Let us consider $$\sum_{n=0}^{\infty}\mathcal{F}_{10}(n)q^n= \frac{(q^2;q^2)_{\infty}(q^5;q^5)_{\infty}^5}{(q;q)_{\infty}(q^{10};q^{10})_{\infty}^5}=1+q+q^2+2q^3+2q^4-2q^5-q^6-4q^8+\cdots.$$ 
Therefore, for any positive integer $n$ we have 
\begin{align}\label{j10=f10}
\mathcal{J}_{10}(n)=\mathcal{F}_{10}(n+1).
\end{align}
Using the binomial theorem we have 
\begin{align}\label{F10(n)}
\sum_{n=0}^{\infty}\mathcal{F}_{10}(n)q^n= \frac{(q^2;q^2)_{\infty}(q^5;q^5)_{\infty}^5}{(q;q)_{\infty}(q^{10};q^{10})_{\infty}^5}\equiv \frac{(q;q)_{\infty}}{(q^{5};q^{5})_{\infty}(q^{20};q^{20})_{\infty}}\pmod2.
\end{align}
Xia and Yao~\cite[p. 391]{Xia2013} proved the following  $2$-dissection formula
\begin{align*}
\frac{(q^5;q^5)_{\infty}}{(q;q)_{\infty}}= \frac{(q^8;q^8)_{\infty}(q^{20};q^{20})_{\infty}^2}{(q^2;q^2)_{\infty}^2(q^{40};q^{40})_{\infty}}+q\frac{(q^4;q^4)_{\infty}^3(q^{10};q^{10})_{\infty}(q^{40};q^{40})_{\infty}}{(q^2;q^2)_{\infty}^3(q^{8};q^{8})_{\infty}(q^{20};q^{20})_{\infty}}.
\end{align*}
Replacing $q$ by $-q$ in the above formula, we obtain
\begin{align}\label{f5/f1}
	\frac{(q;q)_{\infty}}{(q^5;q^5)_{\infty}} = \frac{(q^2;q^2)_{\infty}(q^8;q^8)_{\infty}(q^{20};q^{20})_{\infty}^3}{(q^4;q^4)_{\infty}(q^{10};q^{10})_{\infty}^3(q^{40};q^{40})_{\infty}}-q\frac{(q^4;q^4)_{\infty}^2(q^{40};q^{40})_{\infty}}{(q^{8};q^{8})_{\infty}(q^{10};q^{10})_{\infty}^2}.
\end{align}
It follows from \eqref{F10(n)} and \eqref{f5/f1} that 
\begin{align}\label{F10(n).1}
	\sum_{n=0}^{\infty}\mathcal{F}_{10}(n)q^n\equiv \frac{1}{(q^{20};q^{20})_{\infty}}\left(\frac{(q^2;q^2)_{\infty}(q^8;q^8)_{\infty}(q^{20};q^{20})_{\infty}^3}{(q^4;q^4)_{\infty}(q^{10};q^{10})_{\infty}^3(q^{40};q^{40})_{\infty}}-q\frac{(q^4;q^4)_{\infty}^2(q^{40};q^{40})_{\infty}}{(q^{8};q^{8})_{\infty}(q^{10};q^{10})_{\infty}^2}\right).
\end{align}
Now, we extract the terms containing $q^{2n}$ from \eqref{F10(n).1}, and  use binomial theorem and the $2$-dissection formula~\eqref{f5/f1}, to obtain the following congruences relation.
\begin{align}\label{F10(2n)}
\notag	\sum_{n=0}^{\infty}\mathcal{F}_{10}(2n)q^n &\equiv \frac{(q;q)_{\infty}(q^4;q^4)_{\infty}(q^{10};q^{10})_{\infty}^2}{(q^2;q^2)_{\infty}(q^{5};q^{5})_{\infty}^3(q^{20};q^{20})_{\infty}}\\
\notag	&\equiv \frac{(q;q)_{\infty}(q^2;q^2)_{\infty}}{(q^5;q^5)_{\infty}(q^{10};q^{10})_{\infty}}\\
	&=\frac{(q^2;q^2)_{\infty}}{(q^{10};q^{10})_{\infty}}\left(\frac{(q^2;q^2)_{\infty}(q^8;q^8)_{\infty}(q^{20};q^{20})_{\infty}^3}{(q^4;q^4)_{\infty}(q^{10};q^{10})_{\infty}^3(q^{40};q^{40})_{\infty}}-q\frac{(q^4;q^4)_{\infty}^2(q^{40};q^{40})_{\infty}}{(q^{8};q^{8})_{\infty}(q^{10};q^{10})_{\infty}^2}\right).
\end{align}
Extracting the terms containing the odd power of $q$ from \eqref{F10(2n)}, we have 
\begin{align}
\label{f10(4n+2)} \notag	\sum_{n=0}^{\infty}\mathcal{F}_{10}(4n+2)q^n &\equiv \frac{(q;q)_{\infty}(q^2;q^2)_{\infty}^2(q^{20};q^{20})_{\infty}}{(q^4;q^4)_{\infty}(q^{5};q^{5})_{\infty}^3}\\
	 &\equiv (q;q)_{\infty} (q^5;q^5)_{\infty} \pmod{2}.
\end{align}
Let us consider an eta-quotient $\eta(4\tau)\eta(20\tau)$. Using Theorem \ref{thm_ono1} and Theorem \ref{thm_ono1.1}, we  see that eta-quotient $\eta(4\tau)\eta(20\tau)$ is a cusp form of weight 1, and level $80$ with  certain Nebentypus character, i.e., $$\eta(4\tau)\eta(20\tau)\in S_1\left(\Gamma_0(80), \chi_{-20}\right),$$ where $\chi_{-20}$ is defined by $\chi_{-20}(\bullet)=(\frac{-20}{\bullet})$. From \eqref{f10(4n+2)}, we have
\begin{align}\label{f10(4n+2)=tau}
\sum_{n=0}^{\infty}\mathcal{F}_{10}(4n+2)q^{4n+1}&\equiv \eta(4\tau)\eta(20\tau) \pmod 2.
\end{align} 
Using Theorem \ref{Serre}, we can find a constant $\alpha>0$ such that
\begin{align*}\# \left\{n\leq X:\mathcal{F}_{10}(4n+2)\not\equiv 0 \pmod{2} \right\}= \mathcal{O}\left(\frac{X}{(\log{}X)^{\alpha}}\right).
\end{align*}
Hence  

\begin{align}
\label{f4n+2.new.1}
   \lim\limits_{X\to +\infty}\frac{	\# \left\{n\leq X:\mathcal{F}_{10}(4n+2)\equiv 0 \pmod{2} \right\}}{X}=1. 
\end{align}
Finally, we complete the proof of the theorem by \eqref{j10=f10} and \eqref{f4n+2.new.1}.
\end{proof}
Next, we prove the following infinite families of congruences for the Fourier coefficients of $j_{10}(\tau)$.

\begin{theorem}\label{Thmj10.2} 
Let $k, n$ be non-negative integers. For each $i$ with $1\leq i \leq k+1$, consider the prime numbers $p_i$ such that $p_i  \equiv 3\pmod {4}$. Then, for any integer $j$ not divisible by ${p_{k+1}}$, we have
 	\begin{align*} 
 	\mathcal{J}_{10}\Big(p_1^2p_2^2\dots p_{k}^2p_{k+1}\big(4p_{k+1}n+4j+p_{k+1}\big)\Big) \equiv 0 \pmod 2.
 	\end{align*}
 \end{theorem}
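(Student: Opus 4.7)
The plan is to follow the same blueprint used for Theorem~\ref{Thmj6(24n+3).2}, but with the cusp form $\eta(4\tau)\eta(20\tau)\in S_1(\Gamma_0(80),\chi_{-20})$ and the congruence \eqref{f10(4n+2)=tau} playing the roles that $\eta(8\tau)\eta(16\tau)$ and \eqref{pmodf624n+4.23} played there. First I would cite Martin's classification (as used in Lemma~\ref{lemma2.1.2}) to assert that $\eta(4\tau)\eta(20\tau)$ is a Hecke eigenform, and let $\mathcal{A}(n)$ denote its $n$-th Fourier coefficient. Since $\mathcal{A}(n)=0$ unless $n\equiv 1\pmod 4$, for every prime $p\equiv 3\pmod 4$ we get $\mathcal{A}(p)=0$, hence $\lambda(p)=0$ by evaluating the eigenform relation at $n=1$.

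Next I would prove the exact analogue of Lemma~\ref{lemma2.1.2}: for primes $p\equiv 3\pmod 4$,
\begin{align*}
\mathcal{A}(p^2n+pr)&=0 \quad \text{for } 0<r<p,\\
\mathcal{A}(p^2n)+\chi_{-20}(p)\mathcal{A}(n)&=0.
\end{align*}
Both follow from $\mathcal{A}(pn)+\chi_{-20}(p)\mathcal{A}(n/p)=0$ by splitting according to whether $p\mid n$. I would then transfer these to $\mathcal{F}_{10}$ via the congruence $\mathcal{F}_{10}(4n+2)\equiv \mathcal{A}(4n+1)\pmod 2$ coming from \eqref{f10(4n+2)=tau}. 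Substituting $n\mapsto 4n-pr+1$ in the first identity yields $\mathcal{F}_{10}(4p^2n+p^2+pr-p^3r+1)\equiv 0\pmod 2$; a quick check using $p\equiv 3\pmod 4$ shows the argument is indeed $\equiv 2\pmod 4$, so this sits in the relevant progression. Since $\gcd((p^2-1)/4,p)=1$, as $r$ runs through a nonzero residue system modulo $p$, so does $-r(p^2-1)/4$, and rewriting gives
\begin{align*}
\mathcal{F}_{10}\!\left(4p^2n+p^2+4pj+1\right)\equiv 0\pmod 2 \quad (j\not\equiv 0\pmod p).
\end{align*}
Substituting $n\mapsto 4n+1$ in the second identity, and using $\chi_{-20}(p)\equiv 1\pmod 2$, gives the recursion $\mathcal{F}_{10}(4p^2n+p^2+1)\equiv \mathcal{F}_{10}(4n+2)\pmod 2$.

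Iterating this recursion $k$ times with primes $p_1,\dots,p_k\equiv 3\pmod 4$ produces
\begin{align*}
\mathcal{F}_{10}\!\left(4p_1^2\cdots p_k^2\, n+p_1^2\cdots p_k^2+1\right)\equiv \mathcal{F}_{10}(4n+2)\pmod 2.
\end{align*}
Combining this with the vanishing statement for $p_{k+1}$ (by setting the variable of the recursion equal to $p_{k+1}^2n+(p_{k+1}^2-1)/4+p_{k+1}j$) yields
\begin{align*}
\mathcal{F}_{10}\!\left(4p_1^2\cdots p_k^2p_{k+1}^2\, n + p_1^2\cdots p_k^2p_{k+1}^2 + 4p_1^2\cdots p_k^2 p_{k+1} j + 1\right)\equiv 0\pmod 2
\end{align*}
for any $j\not\equiv 0\pmod{p_{k+1}}$. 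Finally, the identity $\mathcal{J}_{10}(n)=\mathcal{F}_{10}(n+1)$ from \eqref{j10=f10} rewrites the argument as $p_1^2\cdots p_k^2 p_{k+1}(4p_{k+1}n+4j+p_{k+1})$, which is exactly the claim.

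The main obstacle is verifying that $\eta(4\tau)\eta(20\tau)$ is a Hecke eigenform on $\Gamma_0(80)$ with Nebentypus $\chi_{-20}$, since this is not proved in the excerpt; I would invoke Martin's classification of multiplicative eta-quotients, in the same way Lemma~\ref{lemma2.1.2} invokes it for $\eta(8\tau)\eta(16\tau)$. The remaining bookkeeping (checking the residue of the argument modulo $4$, the bijectivity of $r\mapsto j$, and the combinatorial substitution in the induction) is entirely parallel to the $j_6$ case and should cause no difficulty.
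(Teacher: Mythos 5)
Your proposal is correct and follows essentially the same route as the paper: the paper likewise invokes Martin's result that $\eta(4\tau)\eta(20\tau)$ is a Hecke eigenform, derives the identities $\mathcal{B}(p^2n+pr)=0$ and $\mathcal{B}(p^2n)+\left(\frac{-20}{p}\right)\mathcal{B}(n)=0$ for $p\equiv 3\pmod 4$, and then explicitly defers to the proof of Theorem~\ref{Thmj6(24n+3).2} together with \eqref{j10=f10} and \eqref{pmodf10B} for the iteration you carry out in detail. The only difference is notational (the paper writes $\mathcal{B}(n)$ for these coefficients), and your bookkeeping of the substitutions checks out.
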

 
 \noindent For example, if we consider  $k=0$ and  $j\not\equiv 0\pmod{3}$ in  Theorem~\ref{Thmj10.2} then for all $n\geq 0,$ we obtain the following simple congruence:
\begin{align*}
\mathcal{J}_{10}\left(36n + 12j + 9\right) \equiv 0 \pmod{2}.
\end{align*} 
 \begin{proof}[Proof of Theorem \ref{Thmj10.2}]
 Let us consider the Fourier series expansion of  $\eta(4\tau)\eta(20\tau)$ as $\displaystyle\sum_{n=0}^{\infty} \mathcal{B}(n)q^n$, i.e.,
\begin{align}\label{eta=B}
 \eta(4\tau)\eta(20\tau)=
\sum_{n=0}^{\infty} \mathcal{B}(n)q^n.  
\end{align} 
It is easy to observe that  $\mathcal{B}(n) = 0$ if $n\not\equiv 1\pmod{4}$.
From \eqref{f10(4n+2)=tau} and \eqref{eta=B}, we have
\begin{align}\label{pmodf10B}
\mathcal{F}_{10}(4n+2)\equiv \mathcal{B}(4n+1) \pmod{2}.
\end{align}
In \cite{martin}, Martin proved that  $\eta(4\tau)\eta(20\tau)$ is a Hecke eigenform. Using \eqref{hecke1} and \eqref{hecke3}, we get
		\begin{align*}
	\eta(4\tau)\eta(20\tau)|T_p = \sum_{n=1}^{\infty} \left(\mathcal{B}(pn) + \left(\frac{-20}{p}\right) \mathcal{B}\left(\frac{n}{p}\right) \right)q^n = \lambda(p) \sum_{n=1}^{\infty} \mathcal{B}(n)q^n.
	\end{align*}
 Now using a similar argument as in Lemma \ref{lemma2.1.2}, for a prime $p \equiv3\pmod {4}$,  we have
 
\begin{align}\label{new-1.1.2.1.5}
	\mathcal{B}(pn) + \left(\frac{-20}{p}\right) \mathcal{B}\left(\frac{n}{p}\right) = 0.
\end{align}
Now, either $p\nmid n$ or $p\mid n$. Therefore, from \eqref{new-1.1.2.1.5}, we have
	\begin{align*}	
	\mathcal{B}(p^2n + pr) &= 0 \;\;\;   \text{if} \;\;\;  0<r<p,\\
	 \intertext{and}
\mathcal{B}(p^2n) + \left(\frac{-20}{p}\right) \mathcal{B}(n)&= 0.
	\end{align*}
 Next we follow the proof of Theorem \ref{Thmj6(24n+3).2} and use the identity \eqref{j10=f10} and \eqref{pmodf10B} to obtain the result.
 \end{proof}

\begin{theorem}\label{Thmj10.3} Let $k$ be a positive integer and $p$ be a prime number such that $p \equiv 3 \pmod {4}$. Let $ \delta $ be a non-negative integer such that $p$ divides $4\delta +3$, then	 $\mathcal{J}_{10}\left(N_1 \right)$ and  $\mathcal{J}_{10}\left(N_2\right)$ have the same parity, where $N_1=p\left(4p^{k}n+ 4\delta+3\right)$ and $N_2=\left(4p^{k-1}n+ \frac{4\delta +3}{p}\right).$
 \end{theorem}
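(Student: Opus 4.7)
The plan is to mimic the strategy of Theorem~\ref{Thmj6(24n+3).3}, transferring a parity identity for the Hecke eigenform $\eta(4\tau)\eta(20\tau)$ into one for $\mathcal{J}_{10}$ via the chain
\begin{align*}
\mathcal{J}_{10}(n) \;=\; \mathcal{F}_{10}(n+1), \qquad \mathcal{F}_{10}(4m+2) \;\equiv\; \mathcal{B}(4m+1) \pmod{2},
\end{align*}
established in \eqref{j10=f10} and \eqref{pmodf10B}. The starting point is the Hecke-eigenform relation \eqref{new-1.1.2.1.5}, which modulo $2$ simplifies to $\mathcal{B}(pn) \equiv \mathcal{B}(n/p) \pmod{2}$ for every prime $p \equiv 3 \pmod{4}$.

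First I would set $N := 4p^{k}n + 4\delta + 3$ and substitute this value into the Hecke relation, using that $p \mid 4\delta+3$ (and $k \geq 1$) to guarantee $p \mid N$, so that both $pN$ and $N/p$ are integers. This yields
\begin{align*}
\mathcal{B}\bigl(p(4p^{k}n + 4\delta + 3)\bigr) \;\equiv\; \mathcal{B}\!\left(4p^{k-1}n + \tfrac{4\delta + 3}{p}\right) \pmod{2}.
\end{align*}
Second, I would check the residues modulo $4$ so that \eqref{pmodf10B} applies on both sides. On the left, $p(4p^{k}n+4\delta+3) = 4p^{k+1}n + 4p\delta + 3p$, and since $p \equiv 3 \pmod 4$ we have $3p \equiv 1 \pmod 4$; so this integer is $\equiv 1 \pmod 4$ and can be written as $4m_1+1$ with $4m_1+2 = N_1+1$. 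On the right, the congruence $4\delta+3 \equiv 3 \pmod 4$ combined with $p \equiv 3 \pmod 4$ forces $(4\delta+3)/p \equiv 1 \pmod 4$, so $4p^{k-1}n + (4\delta+3)/p \equiv 1 \pmod 4$ as well, and it equals $4m_2+1$ with $4m_2+2 = N_2+1$.

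Third, I would apply \eqref{pmodf10B} on both sides to obtain
\begin{align*}
\mathcal{F}_{10}(N_1+1) \;\equiv\; \mathcal{F}_{10}(N_2+1) \pmod{2},
\end{align*}
and then invoke \eqref{j10=f10} to convert this into $\mathcal{J}_{10}(N_1) \equiv \mathcal{J}_{10}(N_2) \pmod{2}$, completing the proof. The only genuinely delicate point is the modulo-$4$ bookkeeping in the second step: if $p$ were $\equiv 1 \pmod 4$, the quantities $N_i+1$ would not land in the arithmetic progression $4m+2$ where \eqref{pmodf10B} is meaningful, so the hypothesis $p \equiv 3 \pmod 4$ is used twice — once to annihilate $\lambda(p)$ in deriving \eqref{new-1.1.2.1.5}, and once to align the residue classes so that the Hecke identity for $\mathcal{B}$ actually transports to an identity for $\mathcal{F}_{10}$ and hence for $\mathcal{J}_{10}$.
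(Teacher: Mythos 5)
Your proposal is correct and follows essentially the same route as the paper: substitute $n \mapsto 4p^k n + 4\delta + 3$ into the eigenform relation $\mathcal{B}(pn)\equiv \mathcal{B}(n/p)\pmod 2$, verify that both resulting arguments are $\equiv 1 \pmod 4$ (the paper phrases this as $\frac{3p-1}{4}$ and $\frac{4\delta+3-p}{4p}$ being integers), and then transport the identity through \eqref{pmodf10B} and \eqref{j10=f10}. Your explicit remark that $p\equiv 3\pmod 4$ is used both to force $\lambda(p)=0$ and to align the residue classes is a correct and slightly more careful articulation of what the paper leaves implicit.
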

 \begin{proof}
For any prime $p\equiv 3 \pmod 4$. Let us choose a non-negative integer $ \delta $  such that $p$ divides $4\delta +3$. Substituting $n$ by $4p^kn + 4\delta+ 3$ in \eqref{new-1.1.2.1.5}, we obtain	
\begin{align}\label{new-1.1.2.1}
	\mathcal{B}(4p^{k+1}n + 4p\delta+ 3p)\equiv  \mathcal{B}\left(4p^{k-1}n+ \frac{4\delta +3}{p}\right) \pmod 2.
	\end{align}	
Since $\dfrac{3p-1}{4}$ and $\dfrac{4\delta+3-p}{4p}$ are integers, the result directly follows from \eqref{j10=f10} and \eqref{pmodf10B}. 
 \end{proof}
 
As a special case of the above theorem, we obtain the following result.
\begin{corollary}\label{coroj10}	Let $k$ be a positive integer and $p$ be a prime number such that $p  \equiv 3 \pmod {4}$. Then
$\mathcal{J}_{10}\left( 4n+1\right)$ and $\mathcal{J}_{10}\left(p^{2k}\left( 4n+1\right)\right)$    have the same parity.
\end{corollary}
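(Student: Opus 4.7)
The plan is to deduce Corollary \ref{coroj10} directly from Theorem \ref{Thmj10.3} by choosing the free parameter $\delta$ cleverly and then iterating. Given a prime $p\equiv 3\pmod 4$ and a positive integer $k$, set $\delta = (p^{2k-1}-3)/4$. Since $p\equiv 3\pmod 4$ forces $p^{2k-1}\equiv 3\pmod 4$, this $\delta$ is a non-negative integer, and the divisibility $p\mid 4\delta+3=p^{2k-1}$ is automatic. Thus $\delta$ satisfies the hypotheses of Theorem \ref{Thmj10.3}.

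Next I would replace $n$ by $p^{k-1}n$ in Theorem \ref{Thmj10.3}. With the above choice of $\delta$, the quantity $N_1$ becomes
\begin{align*}
p\bigl(4p^{k}\cdot p^{k-1}n + p^{2k-1}\bigr) = p^{2k}(4n+1),
\end{align*}
while $N_2$ becomes
\begin{align*}
4p^{k-1}\cdot p^{k-1}n + \frac{p^{2k-1}}{p} = p^{2(k-1)}(4n+1).
\end{align*}
Theorem \ref{Thmj10.3} therefore yields the recursion
\begin{align*}
\mathcal{J}_{10}\!\left(p^{2k}(4n+1)\right) \equiv \mathcal{J}_{10}\!\left(p^{2(k-1)}(4n+1)\right) \pmod 2.
\end{align*}

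Finally, I would iterate this relation $k-1$ additional times, which telescopes the exponent $p^{2k}$ down to $p^{0}=1$, giving
\begin{align*}
\mathcal{J}_{10}\!\left(p^{2k}(4n+1)\right) \equiv \mathcal{J}_{10}(4n+1) \pmod 2,
\end{align*}
as required. The argument mirrors the proof of Corollary \ref{coroj6(24n+3)} and there is no real obstacle: the only thing to verify is the integrality and divisibility properties of the chosen $\delta$, which are immediate from $p\equiv 3\pmod 4$. Everything else is bookkeeping on the exponents after substituting $n\mapsto p^{k-1}n$ and telescoping.
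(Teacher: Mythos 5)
Your proof is correct and follows essentially the same route as the paper: both reduce to the relation $\mathcal{B}(4p^{k+1}n+4p\delta+3p)\equiv\mathcal{B}\bigl(4p^{k-1}n+\tfrac{4\delta+3}{p}\bigr)\pmod 2$ underlying Theorem \ref{Thmj10.3}, specialize $4\delta+3=p^{2k-1}$ (your substitution $n\mapsto p^{k-1}n$ is equivalent to the paper's replacement of $k$ by $2k-1$), and telescope down to $\mathcal{J}_{10}(4n+1)$. The verification that $\delta=(p^{2k-1}-3)/4$ is a non-negative integer with $p\mid 4\delta+3$ is exactly the needed bookkeeping, so nothing is missing.
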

\begin{proof}
We replace $k$ by $2k-1$ in \eqref{new-1.1.2.1} then substitute $4\delta +3$ by $p^{2k-1}$,  we have
\begin{align*}
\mathcal{B}\big(p^{2k}(4n+1)\big)&\equiv  \mathcal{B}\big(p^{2(k-1)}(4n+1)\big) \pmod 2\\
&~~~\vdots\\
&\equiv  \mathcal{B}(4n+1)\pmod 2.
\end{align*}
The result directly follows \eqref{j10=f10} and \eqref{pmodf10B}.
\end{proof}
 
Next we give a certain values of $n$ where the Fourier coefficients of $j_{10}(\tau)$ and $j_{10}^{*}(\tau)$ are even.
 \begin{theorem}\label{Thmj10=j*10.1} For any positive integer $n$, we have
\begin{itemize}
     \item[(a).] $(2n)$-th, $(8n+3)$-th Fourier coefficients of   $j_{10}(\tau)$ are always even.
     \item[(b).] For $i\in\{0,1,2\}$, the $(4n+i)$-th Fourier coefficients of $j^{*}_{10}(\tau)$ are always even,
\end{itemize}
\end{theorem}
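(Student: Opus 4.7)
The plan is to reduce each claim to a short mod-$2$ dissection calculation using machinery already established in Section~\ref{Sect:j10 and j10*}, namely the identity $\mathcal{J}_{10}(n)=\mathcal{F}_{10}(n+1)$ from \eqref{j10=f10}, the 2-dissection \eqref{F10(n).1} of $\sum_{n}\mathcal{F}_{10}(n)q^n$, and the binomial-theorem fact $(q^{2k};q^{2k})_{\infty}\equiv(q^k;q^k)_{\infty}^2\pmod 2$. For the $(2n)$-th case of part~(a), I extract the odd-degree terms from \eqref{F10(n).1}; only the second summand (which carries the outer factor $q$) contributes. After dividing by $q$, substituting $q^2\mapsto q$, and applying the binomial-theorem identity, the eta-factors in numerator and denominator cancel and the right-hand side collapses to $1\pmod 2$. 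Hence $\mathcal{F}_{10}(2n+1)\equiv 0\pmod 2$ for every $n\geq 1$, which via \eqref{j10=f10} gives the evenness of $\mathcal{J}_{10}(2n)$. For the $(8n+3)$-th case, I extract the even-degree terms from \eqref{F10(n).1} instead, and after simplifying mod~$2$ I expect
\[
\sum_{n=0}^{\infty}\mathcal{F}_{10}(4n)q^n\equiv\frac{(q^2;q^2)_{\infty}^2}{(q^{10};q^{10})_{\infty}}\pmod 2.
\]
Every exponent on the right-hand side is even, so the coefficient of each $q^{2n+1}$ vanishes, giving $\mathcal{F}_{10}(8n+4)\equiv 0\pmod 2$ and hence $\mathcal{J}_{10}(8n+3)\equiv 0\pmod 2$.

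For part~(b), no dissection is needed. Because the middle constant $4$ and the leading coefficient $2^4$ of the third summand in the definition of $j_{10}^*(\tau)$ are both even, only the first summand $\bigl(\eta(\tau)\eta(5\tau)/\eta(2\tau)\eta(10\tau)\bigr)^4$ survives modulo~$2$. Writing it in $q$-series form produces a prefactor $q^{-1}$, and the mod-$2$ simplification of $(q;q)_{\infty}^4(q^5;q^5)_{\infty}^4/\bigl((q^2;q^2)_{\infty}^4(q^{10};q^{10})_{\infty}^4\bigr)$ via $(q^k;q^k)_{\infty}^4\equiv(q^{2k};q^{2k})_{\infty}^2\pmod 2$ (applied to $k=1,5$) followed by $(q^{2k};q^{2k})_{\infty}^2\equiv(q^{4k};q^{4k})_{\infty}\pmod 2$ collapses the fraction to
\[
j_{10}^*(\tau)\equiv\frac{1}{q\,(q^4;q^4)_{\infty}(q^{20};q^{20})_{\infty}}\pmod 2.
\]
The right-hand side is supported only on exponents $\equiv -1\equiv 3\pmod 4$, so $\mathcal{J}_{10}^*(m)\equiv 0\pmod 2$ whenever $m\equiv 0,1,2\pmod 4$, which is precisely part~(b).

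The main obstacle is computational rather than conceptual: the entire argument is bookkeeping in the power series ring modulo~$2$. The step requiring the most care is the collapse producing $(q^2;q^2)_{\infty}^2/(q^{10};q^{10})_{\infty}$ for $\sum_n\mathcal{F}_{10}(4n)q^n$, since several competing eta-factors in numerator and denominator must cancel cleanly via repeated use of $(q^{2k};q^{2k})_{\infty}\equiv (q^k;q^k)_{\infty}^2\pmod 2$; once this reduction is verified, the $(8n+3)$ case of (a) is immediate, and part~(b) is an even more direct application of the same binomial-theorem trick to the eta-quotient defining $j_{10}^*(\tau)$.
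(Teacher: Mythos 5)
Your proposal is correct and follows essentially the same route as the paper: part (a) via the $2$-dissection \eqref{f5/f1} applied (twice) to \eqref{F10(n).1} to reach $\sum_n\mathcal{F}_{10}(4n)q^n\equiv (q^4;q^4)_\infty/(q^{10};q^{10})_\infty\pmod 2$ (your $(q^2;q^2)_\infty^2/(q^{10};q^{10})_\infty$ is the same series mod $2$), and part (b) via the binomial-theorem collapse of $\mathcal{F}^*_{10}$ to $1/\bigl((q^4;q^4)_\infty(q^{20};q^{20})_\infty\bigr)$ with the support argument modulo $4$. Your explicit remark that the odd-part computation yields $1\bmod 2$ after cancellation (so only $\mathcal{F}_{10}(1)$ is odd) is a nice clarification of why the claim holds for $n\geq 1$.
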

 \begin{proof}
 Extracting the terms containing the even power of $q$ from \eqref{F10(2n)}, we have modulo $2$
\begin{align}\label{f104n}
\notag	\sum_{n=0}^{\infty}\mathcal{F}_{10}(4n)q^n &\equiv \frac{(q;q)_{\infty}^2(q^4;q^4)_{\infty}(q^{10};q^{10})_{\infty}^3}{(q^2;q^2)_{\infty}(q^{5};q^{5})_{\infty}^4(q^{20};q^{20})_{\infty}}\\
	&\equiv \frac{(q^4;q^4)_{\infty}}{(q^{10};q^{10})_{\infty}}.
\end{align}
Extracting the terms containing the odd power of $q$ from \eqref{F10(n).1} and \eqref{f104n}, we have 
\begin{align*}
\mathcal{F}_{10}(2n+1) &\equiv 0 \pmod{2}\\
\intertext{and}
\mathcal{F}_{10}(8n+4) &\equiv 0 \pmod{2},
\end{align*}
respectively.
The first part of Theorem \ref{Thmj10=j*10.1} is readily obtained by using \eqref{j10=f10} and the above two congruences.\\

Now  we consider the Hauptmodul $j_{10}^{*}(\tau)$ given as follows and get the result for its $n$-th Fourier coefficient $\mathcal{J}^*_{10}(n)$. 

\begin{align}\label{j*10}
\notag j_{10}^{*}(\tau)&:=\left(\frac{\eta(\tau)\eta(5\tau)}{\eta(2\tau)\eta(10\tau)}\right)^{4}+4+2^4\left(\frac{\eta(2\tau)\eta(10\tau)}{\eta(\tau)\eta(5\tau)}\right)^{4}\\
\notag &= \frac{1}{q} + 22 q + 56 q^2 + 177 q^3 + 352 q^4 + 870 q^5  +\cdots\\
  &=\frac{1}{q} +\sum_{n=0}^{\infty}\mathcal{J}^{*}_{10}(n)q^n.
\end{align}
Let us consider 
\begin{align}\label{F*10(n)}
 \sum_{n=0}^{\infty}\mathcal{F}^{*}_{10}(n)q^n&= \left(\frac{(q;q)_{\infty}(q^5;q^5)_{\infty}}{(q^2;q^2)_{\infty}(q^{10};q^{10})_{\infty}}\right)^4\\
\notag &=1 - 4 q + 6 q^2 - 8 q^3 + 17 q^4 - 32 q^5 + 54 q^6 +\cdots.
\end{align}
Therefore, from \eqref{j*10} and \eqref{F*10(n)} for any non-negative integer $n$, we have 
\begin{align}\label{j*10=f*10}
    \mathcal{J}^{*}_{10}(n)\equiv\mathcal{F}^{*}_{10}(n
    +1)\pmod{2}.
\end{align}
Using the binomial theorem on \eqref{F*10(n)}, we get
\begin{align} \label{f*10(n).1}
\sum_{n=0}^{\infty}\mathcal{F}^{*}_{10}(n)q^n\equiv \frac{1}{(q^4;q^4)_{\infty}(q^{20};q^{20})_{\infty}}\pmod{2}.
\end{align}
Now extracting the terms containing $q^{4n+i}$ from \eqref{f*10(n).1}, we obtain
\begin{align*}
\notag \sum_{n=0}^{\infty}\mathcal{F}^{*}_{10}(4n+i)q^n &\equiv 0 \pmod{2}, 
\end{align*}
where $i=1,2,3$. Now using \eqref{j*10=f*10} and above inequality, we obtain the second part of Theorem~\ref{Thmj10=j*10.1}.
\end{proof}
The Rogers-Ramanujan continued fraction usually define by
\begin{align*}
  \mathcal{R}(q):= \dfrac{q^{1/5}}{1}\cplus \frac{q}{1}\cplus\frac{q^2}{1}\cplus  \frac{q^3}{1}\cplus\contdots=q^{1/5}\dfrac{(q;q^5)_\infty(q^4;q^5)_\infty}
  {(q^2;q^5)_\infty(q^3;q^5)_\infty}, ~~~|q|<1. 
  \end{align*}
We use the following 5-dissections formulas of $ (q;q)_\infty$  associate with Rogers-Ramanujan continued fraction to prove Theorem \ref{j10(z)=j*10(z).1.1}.
\begin{lemma}\cite[p. 165]{Berndt_spirit} \label{R(q)}If $R(q)=\dfrac{q^{1/5}}{\mathcal{R}(q)}$, then we have
\begin{align*}
  (q;q)_\infty&=(q^{25};q^{25})_\infty\bigg(R(q^5)-q-\frac{q^2}{R(q^5)}\bigg).
\end{align*}
\end{lemma}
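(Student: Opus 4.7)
The plan is to show, for each of $j_{10}$ and $j_{10}^{*}$, that modulo~$2$ the series $\sum_{k\ge 0}\mathcal{F}_{10}(8k)q^k$ (respectively $\sum_{k\ge 0}\mathcal{F}_{10}^{*}(8k)q^k$) is supported only on exponents $\equiv 0,2,4\pmod 5$. Since $\mathcal{J}_{10}(n)=\mathcal{F}_{10}(n+1)$ by \eqref{j10=f10}, $\mathcal{J}_{10}^{*}(n)\equiv\mathcal{F}_{10}^{*}(n+1)\pmod 2$ by \eqref{j*10=f*10}, and $40n+8=8(5n+1)$, $40n+24=8(5n+3)$, such an exponent restriction immediately forces $\mathcal{J}_{10}(40n+i)$ and $\mathcal{J}_{10}^{*}(40n+i)$ to be even for $i\in\{7,23\}$.

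First I would isolate the ``eighth section'' of each generating function modulo~$2$. From \eqref{f104n} together with the binomial congruence $(q^{2a};q^{2a})_\infty\equiv(q^a;q^a)_\infty^2\pmod 2$, one has $\sum_{n\ge 0}\mathcal{F}_{10}(4n)q^n\equiv\bigl((q;q)_\infty^2/(q^5;q^5)_\infty\bigr)^2\pmod 2$. A square in characteristic~$2$ contains only even powers of~$q$, so extracting the even exponents yields
\[
\sum_{k\ge 0}\mathcal{F}_{10}(8k)\,q^k\equiv\frac{(q;q)_\infty^2}{(q^5;q^5)_\infty}\pmod 2.
\]
For the Fricke analogue, \eqref{f*10(n).1} already shows that $\mathcal{F}_{10}^{*}(n)$ can be odd only when $4\mid n$, and after the substitution $q^4\to q$ it becomes
\[
\sum_{m\ge 0}\mathcal{F}_{10}^{*}(4m)\,q^m\equiv\frac{1}{(q;q)_\infty(q^5;q^5)_\infty}\equiv\frac{(q;q)_\infty(q^5;q^5)_\infty}{(q^2;q^2)_\infty(q^{10};q^{10})_\infty}\pmod 2.
\]
Plugging the 2-dissection \eqref{f5/f1} into this expression and separating the even and odd parts in~$q$, the even part simplifies -- after repeated use of $(q^{2a};q^{2a})_\infty\equiv(q^a;q^a)_\infty^2$ to cancel the extra eta-factors -- to
\[
\sum_{k\ge 0}\mathcal{F}_{10}^{*}(8k)\,q^k\equiv\frac{(q^2;q^2)_\infty}{(q^5;q^5)_\infty}\pmod 2.
\]

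Next I would apply Lemma~\ref{R(q)} to produce a $5$-dissection of each target. Squaring the Rogers-Ramanujan identity and using $(a+b+c)^2\equiv a^2+b^2+c^2\pmod 2$ gives
\[
(q;q)_\infty^2\equiv(q^{25};q^{25})_\infty^2\Bigl(R(q^5)^2+q^2+q^4/R(q^5)^2\Bigr)\pmod 2,
\]
while replacing $q$ by $q^2$ in Lemma~\ref{R(q)} gives
\[
(q^2;q^2)_\infty\equiv(q^{50};q^{50})_\infty\Bigl(R(q^{10})+q^2+q^4/R(q^{10})\Bigr)\pmod 2.
\]
Dividing each by $(q^5;q^5)_\infty$ leaves an outer factor that is a power series in $q^5$ (hence with exponents $\equiv 0\pmod 5$), multiplied by a trinomial whose three summands carry exponents $\equiv 0$, $2$, $4\pmod 5$, respectively. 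It follows that $\sum_k\mathcal{F}_{10}(8k)q^k$ and $\sum_k\mathcal{F}_{10}^{*}(8k)q^k$ are, modulo~$2$, concentrated on exponents $\equiv 0,2,4\pmod 5$; their coefficients at $k\equiv 1,3\pmod 5$ therefore vanish, and specializing to $k=5n+1$ and $k=5n+3$ finishes both cases simultaneously.

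The main obstacle I anticipate is the reduction for $\mathcal{F}_{10}^{*}$: the series $1/[(q;q)_\infty(q^5;q^5)_\infty]$ is not manifestly a square mod~$2$, so the clean target $(q^2;q^2)_\infty/(q^5;q^5)_\infty$ emerges only after carrying the 2-dissection~\eqref{f5/f1} through several binomial simplifications. Once that collapse is secured, the Rogers-Ramanujan step for $\mathcal{F}_{10}^{*}$ runs in parallel with the cleaner $j_{10}$ case.
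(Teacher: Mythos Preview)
Your proposal does not address Lemma~\ref{R(q)} at all: you take that identity as a black box and instead give a proof of Theorem~\ref{j10(z)=j*10(z).1.1}. The paper does not prove Lemma~\ref{R(q)} either; it is simply quoted from Berndt as a classical $5$-dissection of $(q;q)_\infty$ via the Rogers--Ramanujan continued fraction, so there is no ``paper's own proof'' to compare against.

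If your intended target was Theorem~\ref{j10(z)=j*10(z).1.1}, then your argument is correct and runs along the same lines as the paper's, with two small variations. First, to reduce $\sum_k\mathcal{F}^*_{10}(8k)q^k$ to $(q^2;q^2)_\infty/(q^5;q^5)_\infty$ modulo~$2$, the paper quotes the Tang--Xia $2$-dissection of $1/\bigl[(q;q)_\infty(q^5;q^5)_\infty\bigr]$ (equation~\eqref{1/f1f5}), whereas you multiply and divide by $(q;q)_\infty(q^5;q^5)_\infty$ and feed in the $2$-dissection~\eqref{f5/f1} of $(q;q)_\infty/(q^5;q^5)_\infty$; after the binomial reductions both routes collapse to the same two-term expression~\eqref{f*10(4n).2.1}, so your anticipated ``main obstacle'' does resolve. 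Second, for the Rogers--Ramanujan step you square Lemma~\ref{R(q)} to handle $(q;q)_\infty^2/(q^5;q^5)_\infty$ on the $\mathcal{F}_{10}$ side, while the paper substitutes $q\mapsto q^2$ to handle $(q^2;q^2)_\infty/(q^5;q^5)_\infty$; since $(q;q)_\infty^2\equiv(q^2;q^2)_\infty\pmod 2$ these are interchangeable. The paper's version is marginally tidier because it first shows that $\sum_k\mathcal{F}_{10}(8k)q^k$ and $\sum_k\mathcal{F}^*_{10}(8k)q^k$ are congruent to the \emph{same} series (equation~\eqref{equal.1.1}) and then applies a single $5$-dissection~\eqref{best1.2}, rather than running two parallel dissections.
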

\begin{proof}[Proof of Theorem \ref{j10(z)=j*10(z).1.1}]
 Extracting the terms containing the even power of $q$ from \eqref{f104n}, we have
\begin{align}
\label{f10(8n)}	
\sum_{n=0}^{\infty}\mathcal{F}_{10}(8n)q^n &\equiv 
	\frac{(q^2;q^2)_{\infty}}{(q^{5};q^{5})_{\infty}} \pmod{2}.
\end{align} 
Similarly, collecting the terms containing $q^{4n}$ from \eqref{f*10(n).1}, we obtain 
\begin{align}\label{F*10(4n).22}
\sum_{n=0}^{\infty}\mathcal{F}^{*}_{10}(4n)q^n &\equiv \frac{1}{(q;q)_{\infty}(q^5;q^5)_{\infty}}\pmod{2}.
\end{align}
From Tang and Xia~\cite[Lemma 3.2]{Xia2020}, we have an explicit formula for $(q;q)_{\infty}(q^5;q^5)_{\infty}$,
\begin{align*}
  (q;q)_{\infty}(q^5;q^5)_{\infty}=\frac{(q^4;q^4)_{\infty}^2(q^{10};q^{10})_{\infty}^5}{(q^2;q^2)_{\infty}(q^5;q^5)_{\infty}^2(q^{20};q^{20})_{\infty}^2}-q\frac{(q^2;q^2)_{\infty}^5(q^{20};q^{20})_{\infty}^2}{(q;q)_{\infty}^2(q^4;q^4)^2_{\infty}(q^{10};q^{10})_{\infty}}.
\end{align*}
Now replacing $q$ by $-q$ in the above formula, we obtain
\begin{align*}
  \frac{1}{(q;q)_{\infty}(q^5;q^5)_{\infty}}\frac{(q^2;q^2)_{\infty}^3(q^{10};q^{10})_{\infty}^3}{(q^4;q^4)_{\infty}(q^{20};q^{20})_{\infty}}
  &=\frac{(q^4;q^4)_{\infty}^2(q^5;q^5)_{\infty}^2(q^{10};q^{10})_{\infty}^5(q^{20};q^{20})_{\infty}^2}{(q^2;q^2)_{\infty}(q^{10};q^{10})_{\infty}^6(q^{20};q^{20})_{\infty}^2}\\
  &+q\frac{(q;q)_{\infty}^2(q^2;q^2)_{\infty}^5(q^4;q^4)_{\infty}^2(q^{20};q^{20})_{\infty}^2}{(q^2;q^2)_{\infty}^6(q^4;q^4)^2_{\infty}(q^{10};q^{10})_{\infty}}. 
\end{align*}
Therefore,
\begin{align}\label{1/f1f5}
  \frac{1}{(q;q)_{\infty}(q^5;q^5)_{\infty}} = \frac{(q^4;q^4)_{\infty}^3(q^5;q^5)_{\infty}^2(q^{20};q^{20})_{\infty}}{(q^2;q^2)_{\infty}^4(q^{10};q^{10})_{\infty}^4}
 &+q\frac{(q;q)_{\infty}^2(q^4;q^4)_{\infty}(q^{20};q^{20})_{\infty}^3}{(q^2;q^2)_{\infty}^4(q^{10};q^{10})_{\infty}^4}.
\end{align}
Next, putting the formula \eqref{1/f1f5} in \eqref{F*10(4n).22} and using the binomial theorem, we have 
\begin{align}\label{f*10(4n).2.1}
 \sum_{n=0}^{\infty}\mathcal{F}^{*}_{10}(4n)q^n
\equiv\frac{(q^4;q^4)_{\infty}}{(q^{10};q^{10})_{\infty}}
 +q\frac{(q^{20};q^{20})_{\infty}}{(q^2;q^2)_{\infty}} \pmod{2}
\end{align}
Extracting the terms containing $q^{2n}$ from \eqref{f*10(4n).2.1}, we obtain
\begin{align}
\label{f*8n}\sum_{n=0}^{\infty}\mathcal{F}^{*}_{10}(8n)q^n &\equiv  
 \frac{(q^2;q^2)_{\infty}}{(q^{5};q^{5})_{\infty}} \pmod{2}.
\end{align}
Therefore, \eqref{f10(8n)} and \eqref{f*8n} implies the following congruence relations
\begin{align} \label{equal.1.1}
\sum_{n=0}^{\infty}\mathcal{F}_{10}(8n)q^n &\equiv  \sum_{n=0}^{\infty}\mathcal{F}^{*}_{10}(8n)q^n\equiv 
 \frac{(q^2;q^2)_{\infty}}{(q^{5};q^{5})_{\infty}} \pmod{2}.
\end{align}
Now we focus on studying the arithmetic properties of $\dfrac{(q^2;q^2)_{\infty}}{(q^{5};q^{5})_{\infty}}$ using a $5$-dissections formulas of $(q;q)_\infty$  associated with Rogers-Ramanujan continued fraction. From Lemma $\ref{R(q)}$, we have 

\begin{align}\label{best1.2}
\frac{(q^2;q^2)_{\infty}}{(q^{5};q^{5})_{\infty}}=  \frac{(q^{50};q^{50})_\infty}{(q^{5};q^{5})_\infty}\bigg(R(q^{10})-q^2-\frac{q^4}{R(q^{10})}\bigg).
\end{align}
Since there are no terms of $q$ involving the power of $5n+1$ and $5n+3$ in \eqref{best1.2},  for $i\in \{8, 24\}$, we get
\begin{align*}
 \mathcal{F}_{10}(40n+i)&\equiv 0  \pmod 2,\\
 \intertext{and}
  \mathcal{F}^{*}_{10}(40n+i)&\equiv 0  \pmod 2.
\end{align*}
Hence, Theorem \ref{j10(z)=j*10(z).1.1}~ directly follows from \eqref{j10=f10}, \eqref{j*10=f*10}, and above two congruences.
\end{proof}
\begin{corollary}
For any positive integer $n$, the $(40n+15)$-th Fourier coefficients of $j_{10}(\tau)$, and the $(8n+3)$-th and $(40n+15)$-th Fourier coefficients of $j^*_{10}(\tau)$ have the same parity.
\end{corollary}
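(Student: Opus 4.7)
The plan is to bootstrap on the identity \eqref{equal.1.1} already established in the proof of Theorem \ref{j10(z)=j*10(z).1.1} and match two $2$-adic generating functions via a single coefficient extraction from the Rogers--Ramanujan $5$-dissection. The easy half is $\mathcal{J}_{10}(40n+15)\equiv\mathcal{J}^*_{10}(40n+15)\pmod 2$: writing $40n+16=8(5n+2)$ and combining \eqref{j10=f10}, \eqref{j*10=f*10} with \eqref{equal.1.1} (applied at $m=5n+2$) gives this immediately. The substantive task is to show that $\mathcal{J}^*_{10}(8n+3)$ shares this common parity.

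For that, I would first isolate the generating function of $\mathcal{F}^*_{10}(8n+4)$ modulo $2$. Extracting the odd-degree part of \eqref{f*10(4n).2.1} and rescaling $q^2\mapsto q$ yields
\begin{align*}
\sum_{n=0}^{\infty}\mathcal{F}^*_{10}(8n+4)\,q^{n}\equiv \frac{(q^{10};q^{10})_{\infty}}{(q;q)_{\infty}}\pmod{2}.
\end{align*}
On the other hand, \eqref{equal.1.1} already gives $\sum_{n\ge 0}\mathcal{F}^*_{10}(8n)q^n\equiv\frac{(q^2;q^2)_{\infty}}{(q^5;q^5)_{\infty}}\pmod 2$, and this is precisely what the $5$-dissection \eqref{best1.2} splits into three branches. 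Because $\frac{(q^{50};q^{50})_{\infty}}{(q^5;q^5)_{\infty}}$ is a power series in $q^5$ and $R(q^{10})$ is a power series in $q^{10}$, the three summands of \eqref{best1.2} support only powers of $q$ congruent to $0$, $2$, $4\pmod 5$ respectively. Extracting the coefficient of $q^{5n+2}$ hits only the middle branch, giving
\begin{align*}
[q^{5n+2}]\,\frac{(q^2;q^2)_{\infty}}{(q^5;q^5)_{\infty}}=-[q^{5n}]\,\frac{(q^{50};q^{50})_{\infty}}{(q^5;q^5)_{\infty}}=-[q^{n}]\,\frac{(q^{10};q^{10})_{\infty}}{(q;q)_{\infty}},
\end{align*}
where the final equality makes the substitution $q^5\mapsto q$.

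Working modulo $2$ the sign is harmless, so comparing with the formula for $\sum\mathcal{F}^*_{10}(8n+4)q^n$ above gives $\mathcal{F}^*_{10}(40n+16)\equiv\mathcal{F}^*_{10}(8n+4)\pmod 2$, which via \eqref{j*10=f*10} is the desired $\mathcal{J}^*_{10}(40n+15)\equiv\mathcal{J}^*_{10}(8n+3)\pmod 2$. Chaining this with the first step shows all three parities coincide. I do not anticipate a genuine obstacle; the only point needing care is the bookkeeping that confirms both $\frac{(q^{50};q^{50})_{\infty}}{(q^5;q^5)_{\infty}}$ and $R(q^{10})$ are power series in $q^5$, so that the $5$-dissection cleanly separates the residue classes $0$, $2$, $4$ modulo $5$ and the extraction of $q^{5n+2}$ really does annihilate the other two branches.
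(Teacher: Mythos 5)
Your proof is correct and follows essentially the same route as the paper: extract the odd part of \eqref{f*10(4n).2.1} to get $\sum\mathcal{F}^*_{10}(8n+4)q^n\equiv\frac{(q^{10};q^{10})_\infty}{(q;q)_\infty}\pmod 2$, pick off the $q^{5n+2}$ branch of the Rogers--Ramanujan $5$-dissection \eqref{best1.2} applied to \eqref{equal.1.1}, and finish with \eqref{j10=f10} and \eqref{j*10=f*10}. Your write-up simply makes explicit the bookkeeping (which residue classes mod $5$ each branch supports) that the paper leaves implicit.
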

\begin{proof}Extracting the odd power of $q$ from 
\eqref{f*10(4n).2.1} and the terms containing $q^{5n+2}$ from  \eqref{best1.2}, and using the fact \eqref{equal.1.1}, we obtain
\begin{align*}
\mathcal{F}^{*}_{10}(8n+4) \equiv  \mathcal{F}^{*}_{10}(40n+16)\equiv  \mathcal{F}_{10}(40n+16) \pmod{2}.
\end{align*}
The result readily follows from \eqref{j10=f10} and \eqref{j*10=f*10}.
\end{proof}

\subsection*{Remark} 
In  Theorem \ref{Thmj10=j*10.1} and Theorem \ref{j10(z)=j*10(z).1.1}, we showed that  
$\mathcal{J}^{*}_{10}(4n+j)$ and  $\mathcal{J}^{*}_{10}(40n+i)$  are even when $j\in\{0,1,2\}$ and  $i\in\{7,23\}$, respectively. Although the entire distribution for the Fourier coefficients of $j^{*}_{10}(\tau)$ is yet to study, we propose the following conjecture depending on our numerical calculation.
\begin{conj} Let $A$ and $B$ be two positive integers such that $A\equiv 0\pmod8$, $B\equiv 3\pmod8$.
Then there is an arithmetic progression $An+B$ such that
\begin{align*}
    \lim\limits_{X\to +\infty}\frac{\# \left\{n\leq X:\mathcal{J}^*_{10}(An+B)\equiv 0 \pmod{2} \right\}}{X}=1
\end{align*}

\end{conj}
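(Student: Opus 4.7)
My proposal follows the modular-forms blueprint of Theorem~\ref{Thmj10(4n+1)} and Theorem~\ref{j10(z)=j*10(z).1.1}. By \eqref{j*10=f*10}, $\mathcal{J}^{*}_{10}(An+B)\equiv \mathcal{F}^{*}_{10}(An+B+1)\pmod 2$, and since $A\equiv 0\pmod 8$ and $B\equiv 3\pmod 8$, the index $An+B+1$ lies in the residue class $4\pmod 8$. Applying the identity \eqref{f*10(4n).2.1} already established in the paper, extraction of the odd-power piece (and relabeling $q^{2}\mapsto q$) gives
$$\sum_{n\ge 0}\mathcal{F}^{*}_{10}(8n+4)\,q^{n}\equiv\frac{(q^{10};q^{10})_{\infty}}{(q;q)_{\infty}}\pmod 2.$$
Writing $A=8A'$ and $B+1=8B'+4$, the task reduces to controlling the parity of the coefficient of $q^{A'n+B'}$ in $(q^{10};q^{10})_{\infty}/(q;q)_{\infty}$.

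From here I would iterate further $2$- and $5$-dissection identities — for instance \eqref{f5/f1}, \eqref{1/f1f5}, and Lemma~\ref{R(q)} — to isolate the subprogression $n=A'm+B'$ and aim for a clean eta-quotient expression modulo~$2$. If the resulting eta-quotient passes the tests of Theorems~\ref{thm_ono1}--\ref{thm_ono1.1} and lands in $S_{\ell}(\Gamma_{0}(N),\chi)$ with $\ell\ge 1$, then Serre's Theorem~\ref{Serre} immediately delivers the density-one statement, in exact analogy with the proof of Theorem~\ref{Thmj10(4n+1)}. Natural first targets, where a single $5$-dissection followed by one $2$-dissection has the best chance of collapsing cleanly modulo~$2$, are the progressions $40n+3$, $40n+11$, $40n+19$, and $40n+27$.

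The main obstacle is that the seed series $(q^{10};q^{10})_{\infty}/(q;q)_{\infty}$ is, up to a rational power of $q$, the weight-\emph{zero} eta-quotient $\eta(10\tau)/\eta(\tau)$; arithmetic-progression extraction does not automatically raise the weight, so there is no structural reason a cusp form must appear at the end of the dissection chain. Moreover, the parity of $1/(q;q)_{\infty}$ governs the parity of the ordinary partition function $p(n)$, for which a density-one congruence along any arithmetic progression is a well-known open problem of Newman--Subbarao type. For these reasons I expect a uniform proof covering every admissible pair $(A,B)$ to lie beyond the reach of the present methods; a realistic interim outcome would be to confirm the conjecture for a sparse infinite family of pairs where the dissections happen to force enough cancellation modulo $2$, while the general statement remains genuinely open.
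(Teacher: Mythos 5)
There is no proof in the paper to compare against: the statement is presented explicitly as a \emph{conjecture}, supported only by numerical calculation, so the honest bottom line of your proposal --- that the general statement remains open --- is exactly consistent with the paper's own position. Your partial reduction is correct and is in fact a useful sharpening of the conjecture: since $A\equiv 0$ and $B\equiv 3\pmod 8$ force $An+B+1\equiv 4\pmod 8$, the congruence \eqref{j*10=f*10} together with the odd part of \eqref{f*10(4n).2.1} does give
\begin{align*}
\sum_{n\geq 0}\mathcal{F}^{*}_{10}(8n+4)q^{n}\equiv\frac{(q^{10};q^{10})_{\infty}}{(q;q)_{\infty}}\pmod 2,
\end{align*}
so $\mathcal{J}^{*}_{10}(8m+3)$ has the parity of the $10$-regular partition function $b_{10}(m)$, and the conjecture is equivalent to a density-one evenness statement for $b_{10}$ along progressions $A'n+B'$. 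That identification is worth recording even though it is not a proof.

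Two cautions on your assessment. First, your structural objection --- that the seed is a weight-zero eta-quotient, so no cusp form ``must'' appear --- is a heuristic, not an obstruction: the paper's successful proofs (e.g.\ of Theorem~\ref{Thmj6(24n+3)} and Theorem~\ref{Thmj10(4n+1)}) also start from weight-zero eta-quotients and arrive at the weight-one cusp forms $\eta(8\tau)\eta(16\tau)$ and $\eta(4\tau)\eta(20\tau)$ only after trading powers via $(q^{2k};q^{2k})_{\infty}\equiv(q^{k};q^{k})_{\infty}^{2}\pmod 2$; the notion of weight is not preserved by these mod-$2$ manipulations. Second, your analogy with the parity of $p(n)$ is looser than you suggest: the numerator $(q^{10};q^{10})_{\infty}\equiv(q^{5};q^{5})_{\infty}^{2}\pmod 2$ imposes real structure, and parity results for $\ell$-regular partition functions in arithmetic progressions are often accessible where the corresponding statements for $p(n)$ are hopeless. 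So the conjecture may well be nearer to the paper's existing toolkit than your closing paragraph implies; what you have written is a reasonable research plan, but it neither proves nor refutes the statement, and no proof exists in the paper.
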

\subsection*{Acknowledgements} 
The author has carried out this work at the Indian Institute of Information Technology Sri City (IIITS). We would like to thank the referee for carefully reading our manuscript and giving such constructive comments, which substantially helped us to improve the quality of the article.
\bibliographystyle{plain}
\bibliography{Hauptmoduln}
\end{document}